\newcommand{\blind}{1}
\numberwithin{equation}{section}
\theoremstyle{plain}
\newtheorem{Theorem}{Theorem}[section]
\newtheorem{Corollary}[Theorem]{Corollary}
\newtheorem{Lemma}[Theorem]{Lemma}
\newtheorem{Proposition}[Theorem]{Proposition}
\newtheorem{Assumption}{Assumption}
\renewcommand{\theAssumption}{\Alph{Assumption}}
\newcommand{\settheoremtag}[1]{% \settheoremtag{<tag>}
	\let\oldtheAssumption\theAssumption% Store \thetheorem
	\renewcommand{\theAssumption}{#1}% Redefine it to a fixed value
	\g@addto@macro\endAssumption{% At \end{theorem}, ...
	%	\addtocounter{Assumption}{-1}% ...restore theorem counter value and...
		\global\let\theAssumption\oldtheAssumption}% ...restore \thetheorem
}
\theoremstyle{remark}
\newtheorem{Definition}[Theorem]{Definition}
\newtheorem{Remark}[Theorem]{Remark}
\newtheorem*{Example}{Example}
\newcommand{\btab}{\begin{tab}}
	\newcommand{\etab}{\end{tab}}
\newcommand{\barr}{\begin{array}}
	\newcommand{\earr}{\end{array}}
\newcommand{\beq}{\begin{equation}}
	\newcommand{\eeq}{\end{equation}}
\newcommand{\bdis}{\begin{displaymath}}
	\newcommand{\edis}{\end{displaymath}\noindent}
\newcommand{\bbn}{\mathbb{N}}
\newcommand{\bbr}{\mathbb{R}}
\newcommand{\bbe}{\mathbb{E}}
\newcommand{\bbq}{\mathbb{Q}}
\newcommand{\bbf}{\mathbb{F}}
\newcommand{\bbl}{\mathbb{L}}
\newcommand{\bbs}{\mathbb{S}}
\newcommand{\bbt}{\mathbb{T}}
\newcommand{\bone}{\mathds 1}
\newcommand{\R}{\mathbb{R}}
\newcommand{\N}{\mathbb{N}}
\newcommand{\E}{\mathbb{E}}
\newcommand{\F}{\mathbb{F}}
\renewcommand{\P}{\mathbb{P}}
\newcommand{\lims}{\stackrel{\mathrm{st}}{\longrightarrow}}
\newcommand{\limL}{\stackrel{L^1}{\Longrightarrow}}
\newcommand{\limp}{\stackrel{\mathbb{P}}{\longrightarrow}}
\newcommand{\nto}{{n\to\infty}}
\newcommand{\lec}{\lesssim}
\newcommand{\calc}{\mathcal{C}}
\newcommand{\calf}{\mathcal{F}}
\newcommand{\calw}{\mathcal{W}}
\newcommand{\calz}{\mathcal{Z}}
\newcommand{\al}{{\alpha}}
\newcommand{\la}{{\lambda}}
\newcommand{\La}{{\Lambda}}
\newcommand{\eps}{{\varepsilon}}
\newcommand{\ga}{{\gamma}}
\newcommand{\Ga}{{\Gamma}}
\newcommand{\si}{{\sigma}}
\newcommand{\om}{{\omega}}
\newcommand{\Om}{{\Omega}}
\renewcommand{\phi}{\varphi}
\newcommand{\ov}{\overline}
\newcommand{\un}{\underline}
\newcommand{\wh}{\widehat}
\newcommand{\wt}{\widetilde}
\newcommand{\dd}{\mathrm{d}}
\newcommand{\pd}{\partial}
\newcommand{\Del}{\Delta_n}
\newcommand{\Delh}{\sqrt{\Delta_n}}
\DeclareMathOperator{\var}{{\mathrm{Var}}}
\DeclareMathOperator{\cov}{{\mathrm{Cov}}}
\newcommand{\ra}{\rightarrow}
\newcommand{\ind}{\mathds{1}}
\newcommand{\wti}{\widetilde}
\newcommand{\Den}{\Delta_n}
\newcommand{\DenOneHalf}{\Delta_n^{\frac{1}{2}}}
\newcommand{\DenMinOneHalf}{\Delta_n^{-\frac{1}{2}}}
\newcommand{\tr}{\mathrm{tr}}
\newcommand{\di}{\mathrm{dis}}
\newcommand{\Deni}{\Delta^n_i}
\newcommand{\uDeni}{\un{\Delta}^n_i}
\newcommand{\DenH}{\Delta_n^H}
\newcommand{\LeinsKonv}{\stackrel{L^1}{\Longrightarrow}}
\newcommand{\stabKonv}{\stackrel{\mathrm{st}}{\Longrightarrow}}
\newcommand{\bv}{\big \vert}
\newcommand{\Sig}{\Sigma}
\DeclareMathOperator*{\argmin}{arg\,min}
\newcommand{\bthm}{\begin{Theorem}}
	\newcommand{\ethm}{\end{Theorem}}
\newcommand{\bcor}{\begin{Corollary}}
	\newcommand{\ecor}{\end{Corollary}}
\newcommand{\blem}{\begin{Lemma}}
	\newcommand{\elem}{\end{Lemma}}
\newcommand{\bprop}{\begin{Proposition}}
	\newcommand{\eprop}{\end{Proposition}}
\newcommand{\bdf}{\begin{Definition}}
	\newcommand{\edf}{\end{Definition}}
\newcommand{\bex}{\begin{Example}}
	\newcommand{\eex}{\end{Example}}
\newcommand{\brem}{\begin{Remark}}
	\newcommand{\erem}{\end{Remark}}
\newcommand{\bass}{\begin{Assumption}}
	\newcommand{\eass}{\end{Assumption}}
\newcommand{\bpr}{\begin{proof}}
	\newcommand{\epr}{\end{proof}}
\newcommand{\benu}{\begin{enumerate}}
	\newcommand{\eenu}{\end{enumerate}}
\newcommand{\bit}{\begin{itemize}}
	\newcommand{\eit}{\end{itemize}}
\begin{document}

\date{}

\def\spacingset#1{\renewcommand{\baselinestretch}%
{#1}\small\normalsize} \spacingset{1}

%%%%%%%%%%%%%%%%%%%%%%%%%%%%%%%%%%%%%%%%%%%%%%%%%%%%%%%%%%%%%%%%%%%%%%%%%%%%%%

\if1\blind
{
 \title{\bf When Frictions are Fractional: \\ Rough Noise in High-Frequency Data}
  \author{Carsten H. Chong\thanks{
    We thank Yacine A\"it-Sahalia,  Torben Andersen, Patrick Cheridito, Jean Jacod, Fabian Mies, Serena Ng, Mark Podolskij, Walter Schachermayer, Viktor Todorov and participants at various conferences and seminars for valuable comments and suggestions, which greatly improved the paper. We would also like to thank the Editor, an Associate Editor and a referee, whose detailed comments on earlier drafts led to a substantial improvement of the paper. The second author is partially supported by the Deutsche Forschungsgemeinschaft, project number KL 1041/7-2.}\hspace{.2cm}\\
    Department of Information Systems, Business Statistics and\\  Operations Management,\\ The Hong Kong University of Science and Technology\\
    and \\
    Thomas Delerue \\
   Institute of Epidemiology, Helmholtz Munich\\
	and \\
	Guoying Li \\
	Department of Statistics, Columbia University}
  \maketitle
} \fi

\if0\blind
{
  \bigskip
  \bigskip
  \bigskip
  \begin{center}
    {\LARGE\bf When Frictions are Fractional:\\ \bigskip Rough Noise in High-Frequency Data}
\end{center}
  \medskip
} \fi

\bigskip
\begin{abstract}
The analysis of high-frequency financial data is often impeded by the presence of noise. This article is motivated by intraday return data in which market microstructure noise appears to be \emph{rough}, that is, best captured by a continuous-time stochastic process that locally behaves as fractional Brownian motion. Assuming that the underlying efficient price process follows a continuous Itô semimartingale, we derive consistent estimators and asymptotic confidence intervals for the roughness parameter of the noise and the integrated price and noise volatilities, in all cases where these quantities are identifiable. In addition to desirable features such as serial dependence of increments, compatibility between different sampling frequencies and diurnal effects, the rough noise model can further explain divergence rates in volatility signature plots that vary considerably over time and between assets.
\end{abstract}

\noindent%
{\it Keywords:} Hurst parameter, market microstructure noise, mixed fractional Brownian motion, mixed semimartingales, volatility estimation, volatility signature plot.
\vfill

\newpage
\spacingset{1} 
	\section{Introduction}\label{sec:intro}
	A classical statistical  inference problem consists in   separating a signal $X$ from a noise term $Z$ when only their sum   
\beq\label{eq:Y} Y=X + Z \eeq
is observed. This paper analyzes a particular instance of this problem in which the signal term $X$ is a continuous It\^o semimartingale of the form
\beq\label{eq:X} X_t = X_0 + \int_0^t a_s\, \dd s+\int_0^t \si_s \,\dd B_s, \eeq 
the noise term $Z$ is a rough fractional process to be specified in Section~\ref{sec:model} below, and the observations $\{Y_{i\Den}:i=0,\dots,[T/\Den]\}$ are recorded on a regularly spaced time grid  where $\Den\to0$ and  $T>0$ is fixed. 

The motivation for this problem comes from the statistical analysis of high-frequency financial data, where $X$ models the efficient logarithmic price of an asset (i.e., its economic value in a frictionless setting) and $Z$ denotes microstructure noise, which in practice arises due to bid--ask bounces, transaction costs and other market frictions. In this context, a key quantity of interest is the \emph{integrated (price) volatility}
$C_T=\int_0^T \si_s^2\,\dd s$. In the absence of noise, estimating $C_T$ is a straightforward matter: given observations $\{X_{i\Del}:i=0,\dots,[T/\Del]\}$,   the \emph{realized variance (RV)} defined by $\sum_{i=1}^{[T/\Del]} (\Delta^n_i X)^2$, where $\Delta^n_i X= X_{i\Del}-X_{(i-1)\Del}$, is a consistent estimator of $C_T$ as $\Del\to0$ \citep{JP}.

However, in practice, RV typically explodes as the sampling frequency increases, indicating the presence of noise. 
In order to deal with noisy observations, 
 a common approach in the literature is to model $Z$ at the observation times  $i\Del$ as
\beq\label{eq:nonvan} Z_{i\Del} = \eps^n_i, \eeq
where  $(\eps^n_i)_{i=1}^{[T/\Del]}$ is a discrete time series for each $n$, and to construct noise-robust estimators of $C_T$ based on that assumption.\footnote{\spacingset{1}\footnotesize Examples for $\eps^n_i$ include rounding noise \citep{Delattre97, Li07, Robert10, Rosenbaum09}, white noise   \citep{Bandi06,BN08,Podolskij09,Zhang05}, AR- or MA-type noise \citep{AitSahalia11,Da19,Hansen06}, and certain non-parametric extensions thereof \citep{Jacod09,Jacod17,Li21}. 
}
The current paper is motivated by statistical properties found in certain samples of high-frequency financial data that cannot be explained by such discrete noise models. For instance, if the noise $Z$ is independent of $X$ and takes the form \eqref{eq:nonvan}, where $\eps=(\eps^n_i)_{i=1}^{[T/\Den]}$ is a stationary time series with a distribution that does not depend on $n$, it is a simple consequence of the law of large numbers (LLN) that
$$  \Den\sum_{i=1}^{[T/\Delta_n]} (\Delta^n_i Y)^2 \limp 2\var(\eps)(1- r(1)),$$
where $r$ is the autocorrelation function (ACF) of $\eps^n_i$. In particular, the RV of the observed process $Y$ should be of order $\Den^{-1}$. However, as we can see in the volatility signature plot of  Figure~\ref{fig1} (a), the divergence rate of RV in real samples can be much slower. 
An almost equivalent way of illustrating this observation is to consider \emph{variance plots}, in which the sample variance of increments of $Y$ is computed as a function of the sampling frequency. In our data sample, we observe shrinking price increments; see Figure~\ref{fig1} (b). By contrast, in discrete  noise models, 
\beq\label{eq:var} \var(\Delta^n_i Y) \sim \var(\eps^n_i-\eps^n_{i-1}) = 2\var(\eps)(1-r(1)), \eeq
so asymptotically noise increments do not shrink. 
\begin{figure}[tb!]
	\centering
	\begin{subfigure}{.5\textwidth}
		\centering
		\includegraphics[width=0.95\linewidth]{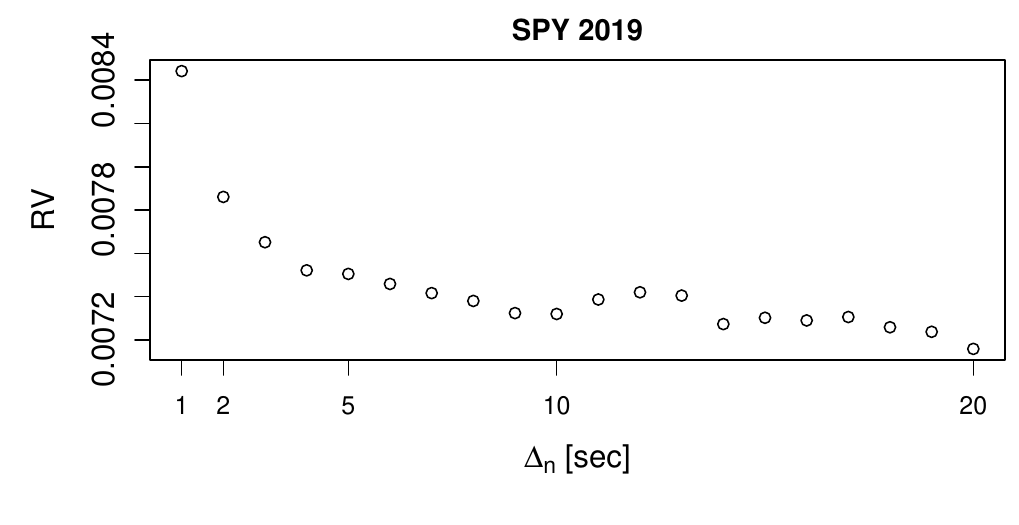}
		\includegraphics[width=0.95\linewidth]{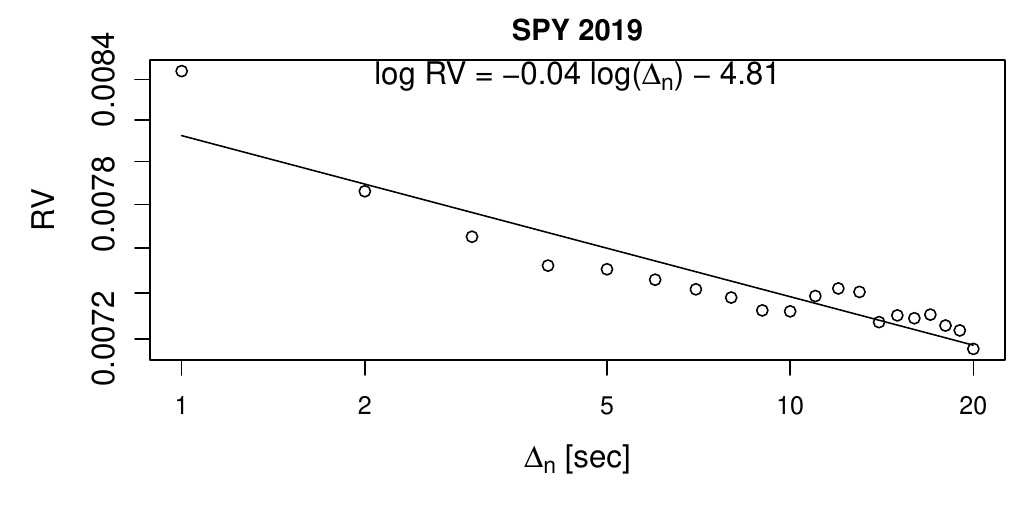}
		\includegraphics[width=0.95\linewidth]{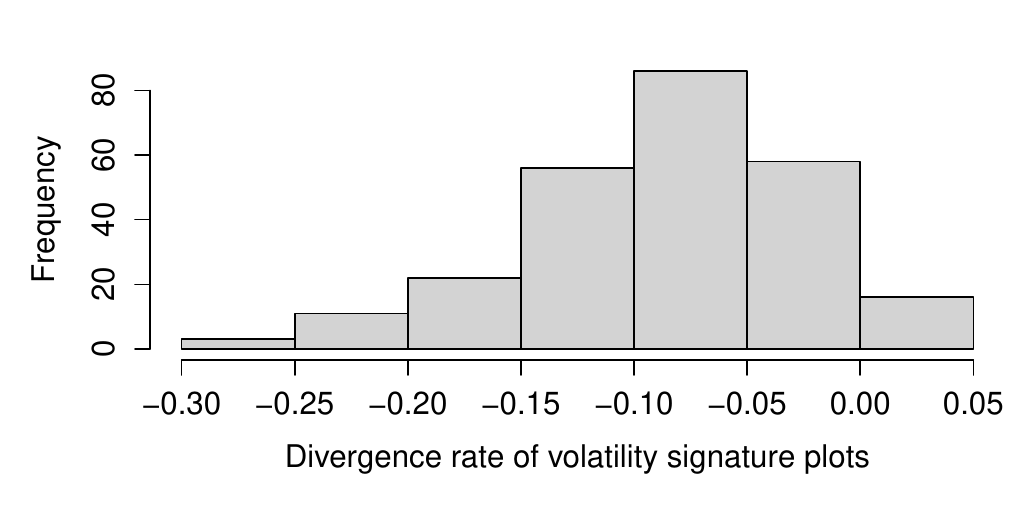}
		\caption{Volatility signature plots}
	\end{subfigure}%
	\begin{subfigure}{.5\textwidth}
		\centering
	\includegraphics[width=0.95\linewidth]{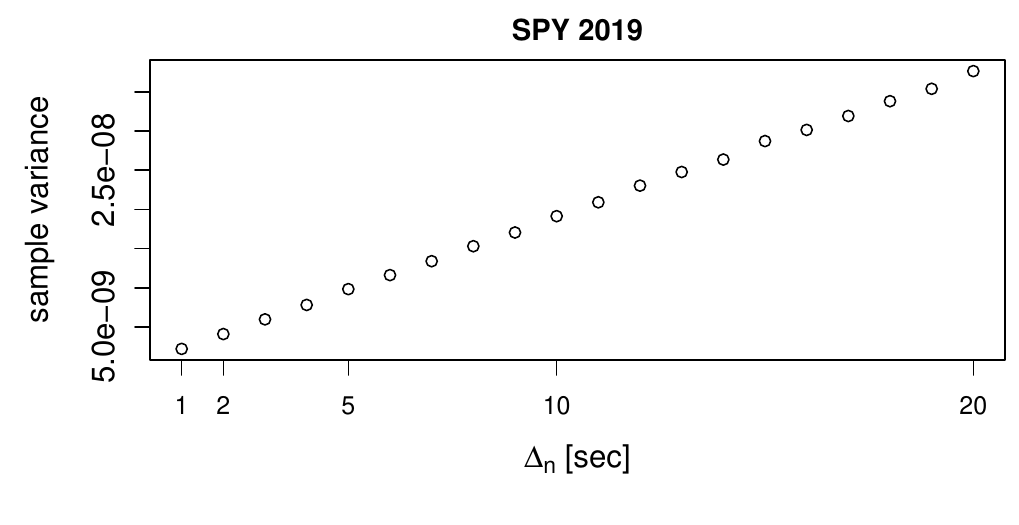}
\includegraphics[width=0.95\linewidth]{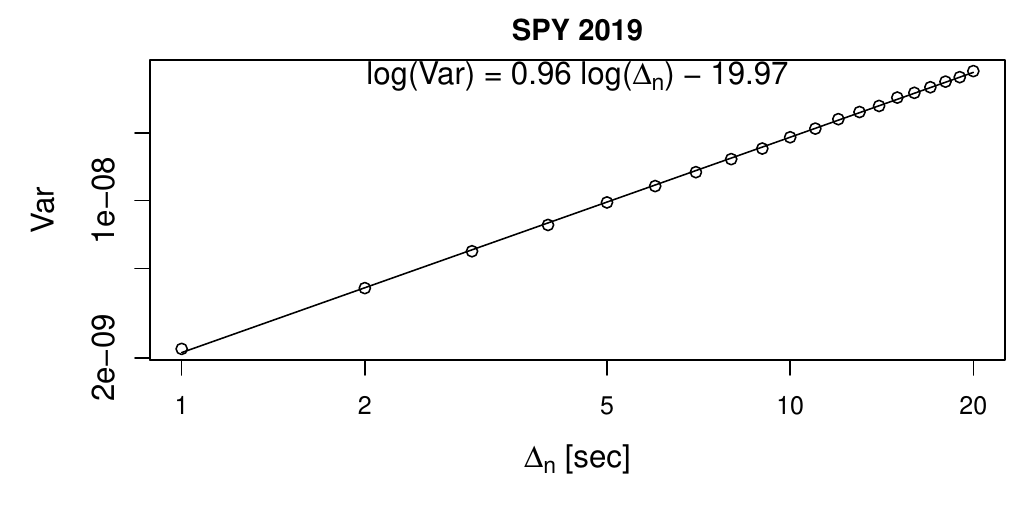}
\includegraphics[width=0.95\linewidth]{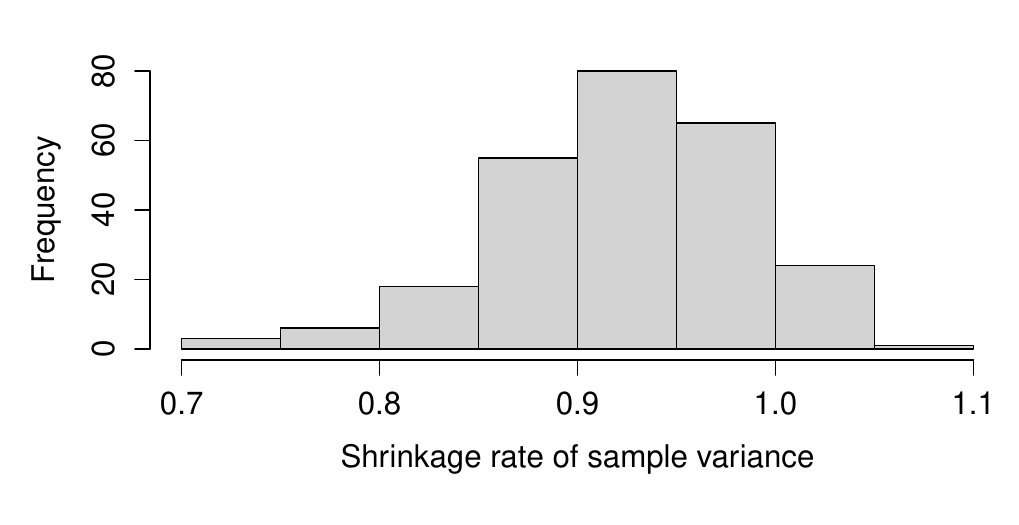}
			\caption{Variance plots}
	\end{subfigure}
	\caption{(a) Volatility signature plot and (b) variance plot for 2019 SPY transaction data (top). The same plots on a log--log scale  (middle)  reveal a divergence rate of $-0.04$ for RV and a shrinkage rate of increments of $0.96$ for the whole year. (The divergence rate of RV is $\alpha$ if $\mathrm{RV}\sim C_1\Den^{\al}$ for some $C_1>0$; the shrinkage rate of increments is $\beta$ if $\var(\Delta^n_i Y)\sim C_2\Den^{\beta}$ for some $C_2>0$.) The histograms (bottom) show the daily divergence rates in volatility signature plots and  the daily  shrinkage rates of price increments  in 2019 SPY transaction data. Each data point corresponds to one trading day.}\label{fig1}
\end{figure}

To corroborate these findings, we perform additional analyses on our data sample, the results of which are shown in Figure~\ref{fig:test}. Panel (a) shows 
\cite{Jacod17}'s point estimators and 95\%-confidence bands for $\var(\eps)$, 
 indicating that $\var(\eps)$ is significantly different from $0$. This suggests that
\beq\label{eq:1} \text{$\var(Z_{i\Den})=\var(\eps^n_i)$ is  bounded away from $0$.} \eeq
Panel (b) shows \cite{Jacod17}'s point estimators and 95\%-confidence intervals for the  first-order autocorrelation $r(1)$ of the noise. As
we can see, there is a   high correlation between $\eps^n_i$ and $\eps^n_{i+1}$, which is not significantly different from $1$ on most days. 
By \eqref{eq:var}, it follows that $\var(\eps^n_i-\eps^n_{i-1})\approx 0$, which we interpret as
\beq\label{eq:2} \var(\Delta^n_i Z)= \var(\eps^n_i-\eps^n_{i-1}) \to 0.\eeq

\begin{figure}[tb!]
	\centering
	\begin{subfigure}{.49\textwidth}
	\centering
	\includegraphics[width=0.95\linewidth]{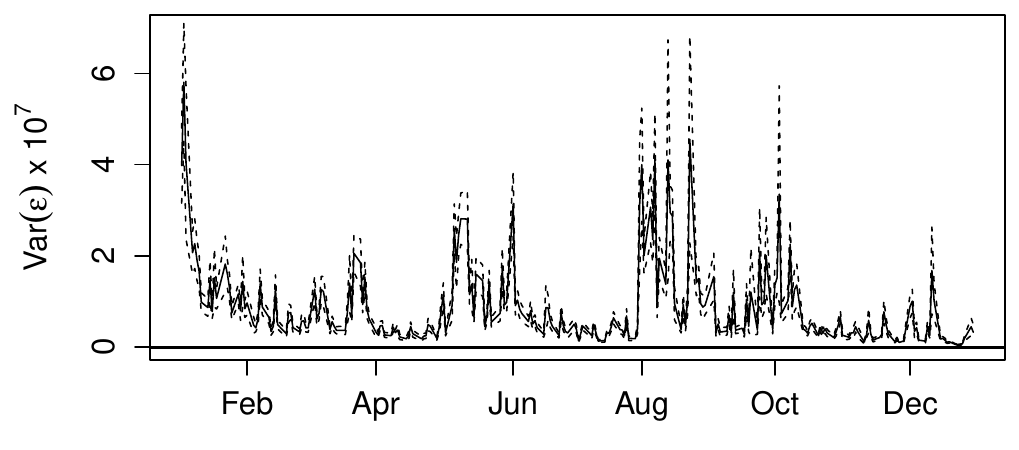}

\vspace{-0.4\baselineskip}
	\caption{}
\end{subfigure}
	\begin{subfigure}{.49\textwidth}
	\centering
	\includegraphics[width=0.95\linewidth]{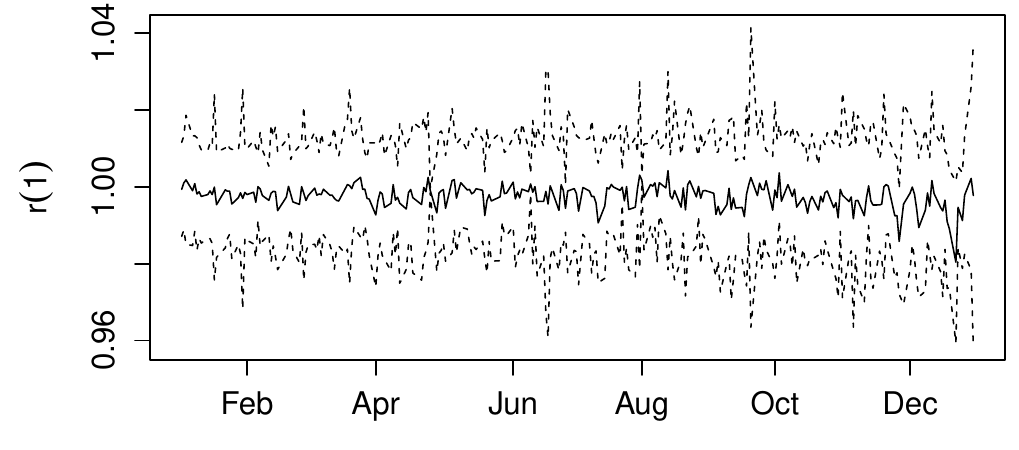}

\vspace{-0.4\baselineskip}
	\caption{}
\end{subfigure}
	\caption{
		(a) Estimators of $\var(\eps)$ and (b) estimators of $r(1)$ including 95\%-confidence intervals. 
		The analysis is based on 2019 SPY transaction data sampled at $\Den=1\,\mathrm{sec}$.}\label{fig:test}
\end{figure}

In conclusion, there is strong empirical evidence that market microstructure noise  in  our data sample is \emph{non-shrinking} (because of \eqref{eq:1}) but with \emph{shrinking increments} (because of \eqref{eq:2}).\footnote{\spacingset{1} \footnotesize We neither claim that this is universally true 
	nor  that noise has shrinking increments in tick time, which is the primary concern in \cite{Jacod17}. It is well known (see e.g., \cite{Hansen06}) that the distributional properties of noise can be very different in calendar time compared to tick time.} To our best knowledge, all microstructure noise models that have been considered so far in the literature are either non-shrinking with non-shrinking increments (as in \eqref{eq:nonvan}) or shrinking with (necessarily) shrinking increments (e.g., \cite{AitSahalia19,Da19,Kalnina08}). 
The goal of this work is to fill in this gap.

To this end,  we introduce in Section~\ref{sec:model} a non-shrinking microstructure noise model with shrinking increments that we call  the \emph{rough noise model}. We   establish a CLT for variation functionals of the resulting semimartingale plus noise process   in  Section~\ref{sec:main}. A major challenge here is the subtle interplay between the semimartingale and the noise process, leading to (a potentially large number of) intermediate limits between the LLN and the CLT. While the LLN and the CLT limits only depend on the noise, these intermediate limits depend on coefficients of the semimartingale and of the noise process at the same time. In Section~\ref{sec:est}, we then combine the CLT of  Section~\ref{sec:main} with a generalized method of moments (GMM) approach to derive consistent and asymptotically mixed normal estimators of integrated price volatility, integrated noise volatility and the roughness of the noise.
Section~\ref{sec:sim} contains a simulation and Section~\ref{sec:emp} shows an empirical study of SPY transaction data. Section~\ref{sec:disc} concludes. The supplement contains more details about modeling noise in continuous time (Appendix~\ref{sec:noisecont}), a multivariate extension of the CLT for mixed semimartingales (Appendix~\ref{sec:mult}) and its proof (Appendix~\ref{Sect:proof}), the proof of the results in Section~\ref{sec:est} (Appendix~\ref{SecC}) and further empirical results (Appendix~\ref{sec:quote}).

\section{Model}\label{sec:model}

Both the noise and the efficient price process are defined on a filtered probability space $(\Om, \mathcal{F}, \F=(\mathcal{F}_t)_{t \geq 0}, \mathbb{P})$ satisfying the usual conditions.
A   natural way of implementing the shrinking increments property \eqref{eq:2} observed in our data is to model the noise process $Z$ in  continuous time. Indeed, if $Z_{i\Den}$ does not change much on average from $i$ to $i+1$, this  implies some form of continuity (e.g., in probability) between them. Therefore, $\{Z_{i\Den}:i\in\N\}$, at least for large $n$, essentially determines a continuous-time process $(Z_t)_{t\geq0}$. A continuous-time noise model further has the advantage that it is compatible  between different sampling frequencies, a property that is typically hard to satisfy for colored noise models with non-shrinking increments (see Section~7.1.2 in \cite{AitSahalia14}). We give more econometric background on about modeling noise in continuous time  in Appendix~\ref{sec:noisecont}.

 \settheoremtag{(Z)}
 \bass\label{ass:Z}
 The process $(Z_t)_{t\geq0}$ is given by
 \beq\label{eq:Z} Z_t=Z_0 + \int_0^t g(t-s)\rho_s\,\dd W_s,\qquad t\geq0,\eeq
 where $W$ is a  standard $\F$-Brownian motion 
 and $(\rho_t)_{t\geq0}$ is an $\F$-adapted locally bounded   process. The kernel $g\colon(0,\infty)\to\R$ is of the form
 \begin{equation} \label{kernel:g}
 	g(t) = K_H^{-1} t^{H - \frac{1}{2}} + g_0(t)
 \end{equation}
for some $H\in (0,\frac 12)$, $K_H  = {\sqrt{2H\sin(\pi H)\Ga(2H)}}/{\Ga(H+\frac12)}$ is a normalization constant
   and $g_0 \colon [0,\infty)   \to \bbr$ is a smooth function with $g_0(0) = 0$.\footnote{\spacingset{1} \footnotesize The condition $H<\frac12$ is not restrictive for the purpose of modeling microstructure noise:
   	if $H=\frac12$, then $Z$ has the same smoothness as Brownian motion, so in general,  there will be no way to discern $Z$ from the efficient price process $X$; if $H>\frac12$, then $Z$ is smoother than $X$ and RV remains a consistent estimator of $C_T$. The normalization $K_H$ is chosen in such a way that $\E[(Z_{t+\delta}-Z_t)^2]/\delta^{2H} \to 1$ as $\delta\to0$ if $\rho\equiv1$.}
 \eass

\begin{Remark} 
	By the Wold--Karhunen representation theorem \cite[Theorem~XII.5.3]{Doob53}, every second-order stationary process, up to deterministic or finite-variation components, has the form $\int_{0}^t G(t-s) \,\dd M_s$
	for some kernel $G\in L^2((0,\infty))$ and some process $(M_t)_{t\geq0}$ with second-order stationary and  orthogonal increments. Therefore, if $Z$ is stationary,  \eqref{eq:Z} is quite a  natural assumption on the noise process. Due to the presence of $\rho$, the process $Z$ in \eqref{eq:Z} does not need to be stationary in general.
\end{Remark}

In the special case where $g_0\equiv0$ and $\rho_s\equiv \rho$ is a constant, $Z$ is---up to a term of finite variation---simply a multiple of \emph{fractional Brownian motion (fBM)}. If further $X_t=\si B_t$ with constant volatility $\si$, then the resulting observed process $Y_t=\si B_t +\rho Z_t$ is  a \emph{mixed fractional Brownian motion (mfBM)}  as introduced by \cite{Cheridito01}. Our model for the observed price process, as the sum of $X$ in \eqref{eq:X} and $Z$ in \eqref{eq:Z}, can be viewed as a non-parametric generalization of mfBM that allows for stochastic volatility in both  its Brownian and its fractional component. We do keep the parameter $H$, though, which we refer to as the \emph{roughness parameter} of $Z$ (or $Y$).\footnote{\spacingset{1}\footnotesize Fractional processes are also used  in \cite{Mandelbrot97,Bayraktar04,Bianchi18} to model asset prices. In these works, the primary interest is short-/long-range dependence, which is determined by   the behavior of $g$ at $t=\infty$. Our interest, by contrast, is the behavior of $g$ around $t=0$, which governs the local regularity, or \emph{roughness}, of the fractional process. Since  our model does not specify the behavior of $g$ at $t=\infty$ (due to the presence of $g_0$ in \eqref{kernel:g}), we refer to $H$  as the roughness parameter of $Z$. 
}
	 In analogy with mfBM, we call 
\begin{equation} \label{mix:SM:mod}
	Y_t=X_t+Z_t=Y_0 +\int_0^t a_s\, \dd s+\int_0^t \si_s \,\dd B_s+ \int_{0}^{t} g(t-s) \rho_s \, \dd W_s,\qquad t\geq0,
\end{equation}
 the observed price process in our model,
 a \emph{mixed semimartingale}.

\brem
 In recent years, there has been growing interest in \emph{rough volatility models} \citep{Gatheral18}, where $\si$   is modeled by a rough  process. In this paper, by contrast, we are concerned with roughness of observed prices, caused by  microstructure noise. Roughness on the price level and roughness on the volatility level imply  different features of asset returns and must  be modeled and analyzed separately. For instance, if $Y_t=X_t=\int_0^t \si_s\,\dd B_s$, without noise but with a rough  $\si$, RV will \emph{not} explode in volatility signature plots. In fact, in the absence of  noise, the asymptotics   of RV do \emph{not} depend on the roughness of volatility \cite[Theorem~5.4.2]{Jacod12}. Therefore, the empirical findings discussed so far and below can neither be explained by nor do they indicate rough volatility. 
\erem

On an abstract level, the statistical problem we are facing in this paper is a deconvolution problem: given a semimartingale process $X$ and rough process $Z$, how can we recover the two (or certain   components of the two, such as volatility) based on observing their sum $Y=X+Z$.
The next result, which follows from \cite[Corollary 2.2]{vanZanten07}, puts a constraint on the identifiability of the (smoother) semimartingale signal:
\bprop\label{prop:imposs} Assume that $Y$ is an mfBM, that is, $Y=X+Z$ where $X=\si B$ and $Z= \rho B^H$ for some  $\rho,\si\in(0,\infty)$, $B$ is a Brownian motion  and $B^H$ is an independent fBM with Hurst parameter $H\in(0,\frac12)$.  For any $T>0$,  the laws of $(Y_t)_{t\in[0,T]}$ and $(Z_t)_{t\in[0,T]}$ are mutually equivalent if   $H\in(0,\frac14)$ and mutually singular if $H\in [\frac14,\frac12)$.
\eprop
In other words,  if $H\in(0,\frac14)$, due to the roughness of the noise, there is no way to consistently estimate $\si$  on a finite time interval. This is conceptually similar to the fact that the finite-variation part of a semimartingale cannot be estimated consistently in finite time if there is a Brownian component. We will comment on possible pathways to estimate $\si$ if $H<\frac14$ in Section~\ref{sec:disc}.

\brem\label{rem:preav} The case of white noise, which formally corresponds to $H=0$ in terms of roughness, is special in this context:  it is rougher than $Z$ in \eqref{eq:Z}, but $C_T=\int_0^T \si_s^2\,\dd s$ can still be recovered  through subsampling \citep{Zhang05} or pre-averaging \citep{Jacod09}. Indeed, if $k_n$ is an increasing sequence and $Z$ is a white noise, then $k_n^{-1}\sum_{j=0}^{k_n} Y_{(i+j)\Del} \approx X_{i\Del}$ by the law of large numbers. By contrast, if $H\in(0,\frac12)$, the process $Z$ in \eqref{eq:Z} is \emph{continuous} (and so is $Y$ in \eqref{eq:Y}), which implies that $k_n^{-1}\sum_{j=0}^{k_n} Y_{(i+j)\Del} \approx Y_{i\Del}$, so pre-averaging does not remove the noise part at all! 
Therefore, while classical noise-robust volatility estimators work well if $Z$ is a modulated white noise, they become inconsistent for $C_T$ if $H\in(0,\frac12)$.   
\erem

\section{Central limit theorem for variation functionals}\label{sec:main}

Our estimators are based on limit theorems for power variations and related functionals in an infill asymptotic setting. To keep notation simple, we only discuss the one-dimensional case here; a multivariate extension of Theorem~\ref{thm:CLT:mixedSM-1} is stated and proved in Appendix~\ref{sec:mult}.  Given a test function $f \colon \bbr \to \bbr$, our  goal is to establish a CLT for \emph{normalized variation functionals}  
\begin{equation*}
	V^n_f(Y, t) = \Delta_n \sum_{i = 1}^{[t/\Del]} f \bigg( \frac{ {\Delta}_i^n Y}{\Den^H} \bigg),
\end{equation*}
where $\Delta_i^n Y = Y_{i \Delta_n}- Y_{(i-1) \Delta_n}$.
For semimartingales, this is a well studied topic; see \cite{AitSahalia14} and \cite{Jacod12} for in-depth treatments of this subject. For fractional Brownian motion or moving-average processes as in \eqref{eq:Z}, the theory is similarly well understood; see \cite{BN11} and \cite{Brouste18}. Surprisingly, it turns out that the mixed case is more complicated than the ``union'' of the purely semimartingale and the  purely fractional case. For instance, as we elaborate in Remark~\ref{rem:CLT3}, already for power variations of even order, we may have a large number of higher-order bias terms.
Our CLT will be proved  under the following set of  assumptions. 

\settheoremtag{(CLT)}
\begin{Assumption} \label{Ass:A-1} The observation process $Y$ is given by the sum of $X$ from \eqref{eq:X} and $Z$ from \eqref{eq:Z} with the following specifications:
	\begin{enumerate}
		\item[(i)]
		The function $f \colon \bbr  \to \bbr $ is even and infinitely differentiable. Moreover, all its derivatives (including $f$ itself) have at most polynomial growth.
		\item[(ii)] Both $B$ and $W$ are independent  standard $\bbf$-Brownian motions, the drift $a$ is locally bounded and $\F$-adapted, and $\si$ is an $\F$-adapted locally bounded  process such that for every $T>0$, there is $K_1\in(0,\infty)$ with
				\begin{equation} \label{reg:cond:rho:B3}
			\bbe \Big[ 1\wedge  \vert \si_t - \si_s   \vert  \Big] \leq K_1 \vert t -s \vert^{\frac{1}{2}},\qquad s,t\in[0,T].
		\end{equation}
		\item[(iii)] The noise volatility process $\rho$ from \eqref{eq:Z} takes the form
		\begin{equation} \label{repr:si}
			\rho_t = \rho^{(0)}_t + \int_{0}^{t} \wti{b}_s \, \dd s + \int_{0}^{t} \wti{\rho}_s \, \dd \wt W_s,\qquad t \geq 0.
		\end{equation}
		In \eqref{repr:si},  $\wt W$ is standard $\bbf$-Brownian motion   that is jointly Gaussian with $(B,W)$;  $\wti{b}$ is   locally bounded and $\F$-adapted;  $\rho^{(0)}$ is an $\F$-adapted locally bounded   process such that for all $T>0$,
			\begin{equation} \label{mom:ass:si:rho:0}
				\bbe \Big[ 1\wedge\vert \rho_t^{(0)} - \rho_s^{(0)}  \vert  \Big]  \leq K_2\vert t - s \vert^{\ga},\qquad s,t\in[0,T],
			\end{equation}
			for some $\ga \in  ( \frac{1}{2}, 1  ]$ and $K_2\in(0,\infty)$; and $\wti{\rho}$ is an $\F$-adapted locally bounded   process   such that for all $T>0$, there exist   $\eps > 0$ and $K_3\in(0,\infty)$ with	
			\begin{equation} \label{reg:ass:si:ti}
				\bbe \Big[ 1\wedge\vert \wti{\rho}_t - \wti{\rho}_s  \vert   \Big]  \leq K_3 \vert t -s \vert^{\eps},\qquad s,t\in[0,T].
			\end{equation}
		\item[(iv)] We have \eqref{kernel:g} with $H\in(0,\frac12)$ and some $g_0\in C^\infty([0,\infty))$ with $g_0(0) = 0$.	
	\end{enumerate}
\end{Assumption}

The following CLT is our   main  technical result.  We write  $\mu_f(v)=\mathbb{E}[f( Z)]$ and $\ga_f(v,q) = \cov(f( Z),f( Z'))$,  where $(Z,Z')$ follows a centered bivariate normal distribution with $\var(Z)=\var(Z')=v$ and $\cov(Z,Z')=q$. Moreover, we 
define
\begin{equation} \label{num:Ga}
	\Ga^H_0 = 1 \qquad \textrm{and} \qquad \Ga^H_r = \frac{1}{2} \Big( (r + 1)^{2H} - 2 r^{2H} + (r-1)^{2H} \Big), \quad r \geq 1,
\end{equation}
and use $\stackrel{\mathrm{st}}{\Longrightarrow}$ (resp., $\limL$) to denote functional stable convergence in law (resp., convergence in $L^1$) in the space of c\`adl\`ag functions  equipped with the local uniform topology.  In the  parametric setup of an mfBM, the CLT for the test function  $f(x)=x^2$ was  obtained by \cite{Dozzi15}.

\begin{Theorem} \label{thm:CLT:mixedSM-1}
	Grant Assumption~\ref{Ass:A-1} and
	let $N(H)=[1/(2-4H)]$ for $H\in(0,\frac12)$. Then  
	\begin{equation} \label{CLT:2-1} 
			\Delta_n^{- \frac{1}{2}} \Bigg\{ V^n_f(Y,t)
			- \int_{0}^{t} \mu_f (\rho_s^2 ) \, \dd s   	- \sum_{j=1}^{N(H)} \frac{\Den^{j (1 - 2H)} }  {j!}\int_{0}^{t}   \mu_f^{(j)} (\rho_s^2) \si_s^{2j} \, \dd s \Bigg \} 
			\stackrel{\mathrm{st}}{\Longrightarrow} \mathcal{Z},
	\end{equation}
	where $\mu_f^{(j)}$ denotes the $j$th derivative of $\mu_f$ and $\mathcal{Z} = (\mathcal{Z}_t)_{t \geq 0}$ is a  continuous process defined on a very good filtered extension $(\ov{\Om}, \ov{\mathcal{F}}, (\ov{\mathcal{F}}_t)_{t \geq 0}, \ov{\mathbb{P}})$ of   $(\Om, \mathcal{F}, (\mathcal{F}_t)_{t \geq 0}, \mathbb{P})$ which, conditionally on  $\mathcal{F}$, is a centered Gaussian process with independent increments and such that the conditional variance function 
	$\calc_t = \ov{\bbe}[\calz _t ^2 \mid \calf]$  is given by
	\begin{equation} \label{cov:fct:lim:CLT-1}
		\calc_t =\int_{0}^{t} \bigg \{  \ga_f(\rho_s^2, \rho_s^2) 
		+2 \sum_{r=1}^{\infty}  \ga_f  (\rho_s^2, \rho_s^2\Ga^H_r) \bigg \} \,\dd s.
	\end{equation}
\end{Theorem}

This result can be extended to a multivariate setting; see   Theorem~\ref{thm:CLT:mixedSM}  in Appendix~\ref{sec:mult}.

\brem\label{rem:CLT1}  It suffices to require $f$ be $2(N(H)+1)$-times continuously differentiable with derivatives of at most polynomial growth. An assumption  as in \eqref{repr:si} is standard for CLTs in high-frequency statistics. But here we need it for $\rho$ (instead of $\si$), as the noise process dominates the efficient price process in the limit $\Del\to0$. Condition \eqref{reg:cond:rho:B3} on $\si$ is satisfied if, for example, $\si$ is itself a continuous It\^o semimartingale. These assumptions do exclude the case of rough volatility. For quadratic functionals (as considered in Corollary~\ref{cor:CLT:lags} below), we conjecture that Assumption~\ref{Ass:A-1} can be relaxed to allow for rough (price and noise) volatility, if further structural assumptions are made concerning   volatility of volatility (e.g., if both $\wt \rho$ and the volatility process of $\si$ are again processes of fractional type); cf.\ \cite{CHLRS23}. To keep the exposition simple, we do not   consider such  an extension here. 
\erem

\brem\label{rem:CLT2} Both the LLN limit $V_f(Y,t)=\int_0^t \mu_f(\rho_s^2)\,\dd s$
and the conditional variance process $\calz$ are driven by the rough component $Z$. In other words, if $\si\equiv0$ (i.e., in the pure fractional case), we would have \eqref{CLT:2-1} without the $\sum_{j=1}^{N(H)}$-expression; see \cite{BN11}. Even if $\si\not\equiv0$, in the case where $H<\frac14$, no additional terms are present because $N(H)=0$. This is in line with Proposition~\ref{prop:imposs}, which states that it is impossible to consistently estimate $C_t=\int_0^t\si^2_s\,\dd s$ if $H<\frac14$. If $H\in(\frac14,\frac12)$, the ``mixed'' terms in the $\sum_{j=1}^{N(H)}$-expression will allow us to estimate $C_t$.
\erem

\brem\label{rem:CLT3} In the special case  where $f(x)=x^{2p}$ for some $p\in\N$,  \eqref{CLT:2-1} reads
\[
\Delta_n^{- \frac{1}{2}} \Bigg\{ V^n_f(Y,t)-\mu_{2p}\int_0^t\rho_s^{2p}\,\dd s
- \sum_{j=1}^{N(H)} \Den^{j (1 - 2H)}  \mu_{2p}{p \choose j}
\int_{0}^{t} \rho^{2p-2j}_s \si^{2j}_s\, \dd s \Bigg \} 
\stackrel{\mathrm{st}}{\Longrightarrow} \mathcal{Z},
\]
where $\mu_{2p}$ is the moment of order $2p$ of a standard normal variable. Typically, one is interested in estimating only one of the terms in the sum $\sum_{j=1}^{N(H)}$ at a time (e.g., $\int_0^t \si^{2p}_s\,\dd s$ corresponding to $j=p$). All other terms (e.g., $j\neq p$) have to be considered as higher-order bias terms in this case. The appearance of (potentially many, if $N(H)$ is large) bias terms for test functions as simple as powers  of even order neither happens in the pure semimartingale nor in the pure fractional setting.
\erem

The proof of Theorem~\ref{thm:CLT:mixedSM-1} is deferred to Appendix~\ref{Sect:proof} in the supplementary material. 
In addition to the usual steps that are common to  CLTs in high-frequency statistics, there are two new challenges   in the present setting:
\benu
\item[(i)] The observation process $Y$ is \emph{not} a semimartingale (and not even close to one). This is because  the rough component $Z$ \emph{dominates} the efficient price process $X$ in the limit as $\Del\to0$. 
In particular, the increments of $Y$ remain conditionally \emph{dependent} as $\Del\to0$.
\item[(ii)] If $H$ is close to (but smaller than) $\frac12$, the semimartingale part is only marginally smoother than the noise part. So for the CLT, there will be an intricate interplay between the efficient price process and the noise process.
\eenu

To overcome the first challenge, we employ a multiscale analysis: by suitably truncating the increments of $Y$, we can restore, to some degree (not on the finest scale $\Del$ but on some intermediate scale $\theta_n\Del$ where $\theta_n\to\infty$), asymptotic conditional independence between increments of $Y$ (see Lemma~\ref{lem:app:2}). This in turn gives  $V^n_f(Y,t)$, as a process in $t$, a semimartingale-like structure on this intermediate scale, which is sufficient for deriving the CLT when we center by appropriate conditional expectations (see \eqref{CLT:fBM}).
 However, because increments are still correlated on the finest scale, the limiting process is not the usual one for semimartingales but the one for  fractional Brownian motion (see \eqref{cov:fct:lim:CLT}, in particular). 
Regarding the second challenge above, we find, to our surprise, that the semimartingale component \emph{never} enters the CLT limit of $V^n_f(Y,t)$ when centered by conditional expectations (see Lemma~\ref{approx:lem:1}), no matter how close $H$ is to $\frac12$.
By contrast, it does affect the limit behavior of these conditional expectations (Lemmas~\ref{lem:approx:6}--\ref{app:ZGW}), producing an $H$-dependent number of higher-order bias terms that neither appear in the pure semimartingale nor in the pure fractional setting.

\section{Estimating the roughness parameter and integrated price and noise volatilities}\label{sec:est}

In this section, we   develop an estimation procedure for the roughness parameter of the noise and the integrated price (if $H>\frac14$) and noise volatilities, that is, for $H$,
$ C_t=\int_{0}^{t} \si_s^2 \, \dd s$ and $\Pi_t= \int_{0}^{t} \rho_s^2 \, \dd s$.
To avoid additional bias terms (cf.\ Remark~\ref{rem:CLT3}), we use quadratic functionals only, that is, we consider 
$f_r(x) = x_1 x_{r + 1}$ for $x = (x_1, \ldots, x_{r + 1}) \in \bbr^{r + 1}$ and $r \in \bbn_0$ 
and the associated variation functionals
$
		V^{n}_{r, t}  = V^{n}_{f_r}(Y, t) = 
		\Den^{1-2H} \sum_{k = 1}^{[t/\Den] - r} 
		 \Delta_k^n Y\Delta_{k+r}^n Y
$. (This is a multivariate variation functional as considered  in Appendix~\ref{sec:mult}.)
Note that $V^n_{r,t}$ is not a statistic as it depends on the unknown  parameter $H$. Therefore, we introduce $\wh{V}^{n}_{t}=(\wh{V}^{n}_{0, t}, \ldots,\wh{V}^{n}_{R, t})^T$, a  non-normalized version of $V^n_{r,t}$ that is a statistic: 
\begin{equation*} 
	\begin{split}
		\wh{V}^{n}_{r, t} & = \wh{V}^{n}_{f_r}(Y, t) = 
		\sum_{k = 1}^{[t/\Den] - r} 
		\Delta_k^n Y \Delta_{k+r}^n Y, \qquad r \in \bbn_0.
	\end{split}
\end{equation*}
Clearly, we have $\Delta_n^{1-2H} \wh{V}^{n}_{r, t} = V^{n}_{r, t}$, so a multivariate extension of Theorem~\ref{thm:CLT:mixedSM-1}  (see Theorem~\ref{thm:CLT:mixedSM} in the appendix) immediately yields:
\begin{Corollary} \label{cor:CLT:lags}
	Let $\wh V^n_t = (\wh V^{n}_{0, t},  \ldots,\wh V^{n}_{R, t})^T$ for a fixed but arbitrary  $R\in\N_0$. For   $H\in(0,\frac{1}{2})$, 
	\begin{equation} \label{CLT:var:func:lags}
		\Delta_n^{- \frac{1}{2}} \bigg\{ \Delta_n^{1-2H} \wh{V}^{n}_{t} - \Ga^H \int_{0}^{t} \rho^2_s \, \dd s - e_1 \int_{0}^{t} \si_s^2 \, \dd s \,\Delta_n^{1-2H}\,\bone_{[\frac14,\frac12)}(H) \bigg\}
		\stackrel{\mathrm{st}}{\Longrightarrow} \mathcal{Z},
	\end{equation}
	where $\Ga^H = (\Ga_0^H,\ldots,\Ga_R^H)^T$, $e_1 = (1,0,\ldots,0)^T \in \bbr^{1+R}$ and $\calz$ is   an $\bbr^{R+1}$-valued continuous process defined on  $(\ov{\Om}, \ov{\mathcal{F}}, (\ov{\mathcal{F}}_t)_{t \geq 0}, \ov{\mathbb{P}})$  which, conditionally on  $\mathcal{F}$, is a centered Gaussian process with independent increments such that  for all $r,r'=0,\dots,R$,  
\begin{equation}\begin{split}	\label{covMat:lags} 
	 \calc^H_{r,r'}(t) &=\ov{\bbe}[\calz^{r}_t \calz^{r'}_t \mid \calf]= \calc^H_{r,r'} \int_{0}^{t} \rho_s^4 \, \dd s,\qquad \calc^H_{r,r'}=v^{H,0}_{r,r'}+\sum_{k=1}^\infty (v^{H,k}_{r,r'}+v^{H,k}_{r',r}),\\
		v^{H,k}_{r,r'}&=	\cov(\Delta B^H_{1+k}\Delta B^H_{1+k+r}, \Delta B^H_1\Delta B^H_{1+r'})=\Ga^H_k\Ga^H_{\lvert r-r'+k\rvert} + \Ga^H_{\lvert k-r'\rvert}\Ga^H_{k+r},
		\end{split}
 \end{equation}
where $\Delta B^H_i=B^H_i - B^H_{i-1}$ for a standard fractional Brownian motion $B^H$.
\end{Corollary}

\subsection{Asymptotically mixed normal estimators}

The simplest estimator for $H$ is  obtained by calculating the rate of divergence in volatility signature plots, that is, by regressing $\log \Del$ on $\log \wh V^n_{0,t}$  (see also \cite{Rosenbaum11} for a more general but related concept). However, as noted by \cite{Dozzi15} in their Remark~3.1, already in an mfBM model, this   estimator only has a logarithmic rate of convergence. Indeed, as our simulation study in Section~\ref{sec:sim} shows, this estimator systematically overestimates $H$ unless $H$ is very close to  $\frac12$.
In the pure fractional case, rate-optimal estimators are given by  so-called change-of-frequency  or autocorrelation estimators \citep{BN11,Corcuera13}. Both  extract information about $H$ by considering the ratio of (different combinations of) $\wh V^n_{r,t}$ for  different values of $r$. For example, the simplest autocorrelation estimator is 
\beq\label{eq:acf} \wt H^n_{\text{acf}}=\frac12\bigg[1+\log_2\bigg(\frac{\wh V^n_{1,t}}{\wh V^n_{0,t}}+1\bigg)\bigg], \eeq
which is based on the fact that $\wh V^n_{1,t}/\wh V^n_{0,t} = V^n_{1,t}/V^n_{0,t}\limp \Ga^H_1=2^{2H-1}-1$. But due to the bias term that appears in \eqref{CLT:var:func:lags} when $r=0$, the   convergence rate worsens and  becomes suboptimal when \eqref{eq:acf} is applied to mixed semimartingales. A simple way to circumvent this problem is to consider ratios of $\wh V^n_{r,t}$ for two different values of $r\neq0$. This indeed leads to   estimators of $H$ with rate of convergence $\Den^{-1/2}$, and the first rate-optimal estimator of $H$ in the case of mfBM, which is constructed in Theorem~3.2 of \cite{Dozzi15}, is exactly of this type. However, these estimators  suffer from a fundamental identification problem: if the observed price  is simply  $Y=\si B$ for some constant $\si>0$ (i.e., there is no noise), then, by standard CLTs for Brownian motion, the ratio $\wh V^n_{r_1,t} / \wh V^n_{r_2,t} $ (for $r_1\neq r_2$ with $r_1\neq0$ and $r_2\neq0$) converges stably in law to the ratio $Z_1/Z_2$ of two independent centered   normal random variables. Because $Z_1/Z_2$ has a density supported on $\R$, estimators based on such ratios can generate estimates from any non-empty open interval with positive probability. Thus, with such estimators, it is impossible to tell whether there is evidence of rough noise or whether a small estimate of $H$ is simply the result of chance. Even if $H$ is less than  but close to $\frac12$, the finite-sample variance is so large that in their Remark~3.2, \cite{Dozzi15} do not recommend using such estimators in practice.

Our strategy, by contrast, uses all lags $r=0,\dots, R$ for some finite $R\in\N$ and is a two-step procedure. In a first step, we use the   statistic 
\begin{equation}\label{eq:test} 
	\wh T^n = \frac{\wh V^n_{1,t}}{\sqrt{\sum_{k=1}^{[t/\Den]-1} (\Delta^n_k Y\Delta^n_{k+1}Y)^2}}
\end{equation} 
to test for the presence of noise. If there is no noise (i.e., if $Y=X$ is a semimartingale), then $\wh T^n\lims  N(0,1)$; cf.\ Theorem 8 of \cite{ALTZ23}. If there is noise (i.e., $H<\frac12$ and $\Pi_t\neq0$), then it is easy to see that $\wh T^n \to -\infty$ at a rate of $\Den^{-1/2}$. Therefore, if $\wh T^n > -q_n$ where $q_n = q\log \Den^{-1}$ for some $q>0$, we set 
\begin{equation}\label{eq:H1/2} 
	\wh H^n = \tfrac12,\quad \wh \Pi^n_t=0,\quad\wh C^n_t = \wh V^n_{0,t}.
\end{equation} In the absence of noise, this happens with probability converging to $1$. 

If $\wh T^n \leq -q_n$,  we construct  an estimator $	\wh\theta^n_t=(\wh H^n, \wh\Pi^n_t,\wh C^n_t) $ of $\theta_t=(H,\Pi_t, C_t)$
using a generalized method of moments (GMM) approach \citep{Hansen82},  by solving
\begin{equation}\label{eq:esteq} 
	 \argmin_{\theta=(H,\Pi,C)} \Bigl\{ \bigl\lVert  \wh\calw_n^{1/2}(\wh V^n_t-\Den^{2H-1}\Pi\Ga^H-Ce_1)\bigr\rVert_2^2\quad\text{subject to}~ \Pi,C\geq0,~ H\in(0,\tfrac12]\Bigr\},
\end{equation}
or rather
\begin{equation}\label{eq:F} 
	F_n(\theta)=\nabla_\theta\lVert \wh{\mathcal{W}}_n^{1/2}(\wh V^n_t-\Den^{2H-1}\Pi\Ga^H-Ce_1)\rVert^2_2 = 0\qquad\text{on } (0,\tfrac12]\times[0,\infty)^2,
\end{equation}  
where $\wh\calw_n$ is a (possibly random) sequence of symmetric positive definite matrices in $\R^{(R+1)\times(R+1)}$ and $\lVert\cdot\rVert_2$ denotes the Euclidean norm. The main theorem of this paper is the following.

\begin{Theorem}\label{thm:main} Suppose that (ii)--(iv) of Assumption~\ref{Ass:A-1} are satisfied. Further suppose that $R\geq2$ and that $\wh\calw_n\limp \calw$, where $\calw\in\R^{(R+1)\times(R+1)}$ is a deterministic symmetric positive definite matrix. 
\begin{enumerate}
	\item[(i)] If $H\in(\frac14,\frac12)$ and $\Pi_t,C_t>0$ almost surely, then there exists a sequence of estimators $	\wh\theta^n_t=(\wh H^n, \wh\Pi^n_t,\wh C^n_t) $ of $\theta_t=(H,\Pi_t, C_t)$ such that $\P(F_n(\wh \theta^n_t)=0)\to1$ and
	\begin{equation}\label{eq:conv-theta} 
		D_n(t)^{-1}(\wh \theta^n_t-\theta_t) \lims  (E(t)E(t)^T)^{-1}E(t)\calw^{1/2}\calz 
	\end{equation}
where  $\calz$ is the same process as in Corollary~\ref{cor:CLT:lags} and
\begin{equation*}
D_n(t)	=\begin{pmatrix}\Den^{1/2} & 0 & 0\\ 2\Den^{1/2}\lvert\log \Den \rvert\Pi_t & \Den^{1/2} & 0\\ 0&0&\Den^{2H-1/2}\end{pmatrix}\quad\text{and}\quad E(t)=(\Pi_t\partial_H\Ga^H,\Ga^H,e_1)^T\calw^{1/2}.
\end{equation*}
In the last line,   $\partial_H \Ga^H$ is the entrywise derivative of $\Ga^H$ with respect to $H$. 
\item[(ii)]  If $H\in(0,\frac14)$ and $\Pi_t>0$ almost surely, then there is a sequence of estimators $	\wh\theta^{\prime n}_t=(\wh H^n, \wh\Pi^n_t) $ of $\theta'_t=(H,\Pi_t)$ such that $\P(F'_n(\wh\theta^{\prime n}_t)=0)\to1$ and 
	\begin{equation}\label{eq:conv-theta-2} 
	D'_n(t)^{-1}(\wh\theta^{\prime n}_t-\theta'_t) \lims (E'(t)E'(t)^T)^{-1}E'(t)\calw^{1/2}\calz, 
\end{equation}
where $\calz$ is the same process as in Corollary~\ref{cor:CLT:lags},
\begin{equation}\label{eq:F-2} 
	F'_n(\theta')=F'_n(H,\Pi)=\nabla_{\theta'}\lVert \wh{\mathcal{W}}_n^{1/2}(\wh V^n_t-\Den^{2H-1}\Pi\Ga^H)\rVert^2_2
\end{equation}  
 and
\begin{equation*}
	D'_n(t)	=\begin{pmatrix}\Den^{1/2} & 0 \\ 2\Den^{1/2}\lvert\log \Den \rvert\Pi_t & \Den^{1/2} \end{pmatrix}\quad\text{and}\quad E'(t)=(\Pi_t\partial_H\Ga^H,\Ga^H)^T\calw^{1/2}. 
\end{equation*}
\item[(iii)] In the setup of (i) (resp., (ii)), the sequences $(\wh\theta^n_t)_{n\in\N}$ (resp.,  $(\wh\theta^{\prime n}_t)_{n\in\N}$) are locally unique in the sense that if $\wt \theta^n_t$ also satisfies $\P(F_n(\wt \theta^n_t)=0)\to1$ and $\P(\lVert \wt \theta^n_t - \theta_t\rVert\leq 1/(\log \Den)^2) \to1$ (resp., $\P(F'_n(\wt \theta^n_t)=0)\to1$ and $\P(\lVert \wt \theta^n_t - \theta'_t\rVert\leq 1/(\log \Den)^2) \to1$), then $\P(\wh\theta^n_t = \wt\theta^n_t)\to1$ (resp., $\P(\wh\theta^{\prime n}_t = \wt\theta^n_t)\to1$).
Moreover, in the situation considered in (ii), if $\wh \theta^n_t=(\wh H^n,\wh\Pi^n_t,\wh C^n_t)$  satisfies $\P(F_n(\wh \theta^n_t)=0)\to1$, then \eqref{eq:conv-theta-2} continues to hold with  $\ov\theta^n_t=(\wh H^n,\wh\Pi^n_t)$ instead of $\wh\theta^{\prime n}_t$.
\end{enumerate}
\end{Theorem}

 The proof can be found in Appendix~\ref{SecC} in the supplement and uses the theory of estimating equations  \citep{JS18,MP23} to derive \eqref{eq:conv-theta} and \eqref{eq:conv-theta-2} from Corollary~\ref{cor:CLT:lags}.
The rates of convergence of  $\wh H^n$, $\wh \Pi^n_t$ and $\wh C^n_t$ (if $H>\frac14$) are $\Den^{-1/2}$, $\Den^{-1/2}/\lvert \log \Den\rvert$ and $\Den^{1/2-2H}$, respectively. 
The additional logarithmic factor in estimating $ \Pi_t$ is due to the fact that $H$ is unknown and already appears in the pure fractional setting \citep{Brouste18}.  The rate of convergence of $\wh C^n_t$ decreases with $H$ and $C_t$ can no longer be consistently estimated if $H<\frac14$. Therefore, the previous theorem yields a quantitative version of Proposition~\ref{prop:imposs}. 

If $H<\frac14$, there is no way to estimate $C_t$ consistently on a finite time interval. This is why the case $H<\frac14$ in (ii) has to be stated  separately from   part (i) where $H>\frac14$. However, as a consequence of the last part of Theorem~\ref{thm:main}, there is no need in practice to know or distinguish whether $H<\frac14$ or $H>\frac14$, and we always recommend  solving \eqref{eq:esteq} to obtain estimates of $H$, $\Pi_t$ and $C_t$.  If $H>\frac14$, we know that the resulting estimators are asymptotically mixed normal. If $H<\frac14$, we still have asymptotic normality for the estimators of $H$ and $\Pi_t$ but the estimator of $C_t$ is no longer consistent. 

\subsection{Feasible implementation}

In order to obtain a consistent estimator of the asymptotic variance of $\wh H^n$, $\wh \Pi^n_t$ and $\wh C^n_t$ (if $H\in (\frac14,\frac12)$), we proceed analogously to \cite{LX16} and define 
\begin{equation}\label{eq:acov} \begin{split}
	\wh \Sig_n &= \wh\Sig^{(0)}_n + \sum_{\ell=1}^{\ell_n} K(\ell,\ell_n)(\wh \Sig^{(\ell)}_n + (\wh\Sig^{(\ell)}_n)^T),  \\
	 \wh\Sig^{(\ell)}_n &= \Den\sum_{i=\ell+1}^{[t/\Den]-R} \eta^{(i)}(\eta^{(i-\ell)})^T \in\R^{(R+1)\times(R+1)},\qquad \eta^{(i)}=(\eta^{(i)}_0,\dots,\eta^{(i)}_R)^T,\\
	 \eta^{(i)}_r &= \Delta^n_i Y \Delta^n_{i+r}Y - \wh m^{n,r}_i,\qquad \wh m^{n,r}_i=\frac1{k_n}\sum_{j=0}^{k_n-1} \Delta^n_{i+j} Y\Delta^n_{i+j+r}Y,\\
	\wh\zeta_n& = (\Den^{2\wh H^n}\wh\Pi^n_t (\partial_H \Ga^{\wh H^n}-2\lvert\log \Den\rvert \Ga^{\wh H^n}),\Den^{2\wh H^n}\Ga^{\wh H^n}, \Den e_1) \in \R^{(R+1)\times 3},
	\end{split}
\end{equation}
where $K$ is a deterministic kernel function and $k_n$ and $\ell_n$ are integer sequences.

\begin{Corollary}\label{cor:feas} Assume   the conditions of Theorem~\ref{thm:main}  and that $K$ is uniformly bounded with $K(\ell,\ell_n)\to1$ for every fixed $\ell\geq1$. Further suppose that $k_n$ and $\ell_n$ increase to infinity such that $\ell_n/\sqrt{k_n}\to0$ and $\ell_n\sqrt{k_n\Den}\to0$.	If we denote the
  diagonal elements of the $3\times 3$-matrix
	\begin{equation}\label{eq:avar} 
	\mathbb{V}_n=\Den	(\wh \zeta_n^T\wh {  \mathcal{W}}_n \wh \zeta_n)^{-1}\wh\zeta_n^T \wh{ \mathcal{W}}_n\wh\Sig_n\wh { \mathcal{W}}_n \wh\zeta_n(\wh \zeta_n^T\wh {  \mathcal{W}}_n \wh \zeta_n)^{-1}
	\end{equation}
by $\mathbb{V}_n^H$, $\mathbb{V}_n^\Pi$ and $\mathbb{V}_n^C$ and the distribution function of the standard normal law by $\Phi$, then for any $\ga\in(0,1)$,
\begin{equation*} 
	[\wh H^n \pm \Phi^{-1}((1-\ga)/2)\sqrt{\mathbb{V}_n^H}],\quad [\wh \Pi^n_t \pm \Phi^{-1}((1-\ga)/2)\sqrt{\mathbb{V}_n^\Pi}],\quad [\wh C^n_t \pm \Phi^{-1}((1-\ga)/2)\sqrt{\mathbb{V}_n^C}] 
 \end{equation*}
are, respectively, asymptotic $\ga$-confidence intervals for $H$, $\Pi_t$ and $C_t$ (if $H\in(\frac14,\frac12)$).
\end{Corollary}

\subsection{Finite-sample considerations}\label{sec:fsc}

As $H$ approaches $\frac12$, distinguishing volatility from a marginally rougher noise term becomes increasingly difficult. In this case, it can happen in finite samples that $\wh \Pi^n_t$ yields a better approximation of $C_t$, while $\wh C^n_t$ yields a better approximation of $\Pi_t$.  As $\Pi_t$ and $C_t$ are not separable in the limit   $H=\frac12$, there is no way this can be prevented in general. However, if one is willing to incorporate a priori information such as the assumption that $\Pi_t$ is smaller than $C_t$ (which in our application is supported by previous empirical results of \cite{AY09}), one can restrict the minimization problem \eqref{eq:esteq} to solutions where $\Pi\leq C$, which by design eliminates the mix-ups mentioned before.\footnote{\spacingset{1} \footnotesize The assumption $\Pi_t\leq C_t$ does \emph{not} imply that   noise only accounts for a small proportion of the log-return variance. Indeed, this proportion is given by $\Den^{2H-1}\Pi_t/(\Den^{2H-1}\Pi_t+C_t)$, which can be large because of $\Den^{2H-1}$ even if $\Pi_t\leq C_t$.}
Thus, we implement estimators $(H^n, \Pi^n_t, C^n_t)$ obtained as follows:
\begin{enumerate}
	\item[(i)] If $\wh T^n >-q_n$, we set $(H^n, \Pi^n_t, C^n_t)=(\wh H^n, \wh \Pi^n_t, \wh C^n_t)$ from \eqref{eq:H1/2}.
	\item[(ii)] Otherwise, we compute $(H^n, \Pi^n_t, C^n_t)$ by solving 
	\begin{equation*}
		\argmin_{\theta=(H,\Pi,C)} \Bigl\{ \bigl\lVert \wh\calw_n^{1/2}(\wh V^n_t-\Den^{2H-1}\Pi\Ga^H-Ce_1)\bigr\rVert_2^2\quad\text{subject to}~ C\geq\Pi\geq0,~ H\in(0,\tfrac12]\Bigr\}.
	\end{equation*}
\end{enumerate}
As long as $\Pi_t\leq C_t$ if $H>\frac14$, we have $\P( H^n=\wh H^n,\ \Pi^n_t = \wh\Pi^n_t,\ C^n_t=\wh C^n_t \text{ if } H>\frac14)\to1$, which means that $(H^n, \Pi^n_t, C^n_t)$ is only a finite-sample adjustment of $(\wh H^n, \wh\Pi^n_t, \wh C^n_t)$ and the asymptotic results of Theorem~\ref{thm:main} and Corollary~\ref{cor:feas} continue to hold for $(H^n, \Pi^n_t, C^n_t)$.

\section{Monte Carlo simulation}\label{sec:sim}

We evaluate the performance of $  H^n$, $  \Pi^n$ and $  C^n$ when applied to  the mfBM model
$$Y_t= X_t+Z_t=\si B_t +\rho B^H_t,\qquad t\in[0,T],$$
where $\si=0.02$,  $B$ and $B^H$ are independent and $T=5$ trading days, each of which consists of $6.5$ hours or $n=23{,}400$ seconds. Accordingly, we choose $\Del=1/n=1/23{,}400$. The values of $H$ will be taken from the set
\beq\label{Hvalues} H\in \{0.05, 0.1, 0.15, 0.2, 0.25, 0.3, 0.35, 0.4, 0.45\}. \eeq
We also include ``$H=0.5$'' (i.e.,  $\rho=0$) and ``$H=0$,'' in which case $(B^0_t)_{t\in[0,T]}$ is a  centered Gaussian white noise with variance $1/2$ (such that increments have variance $1$). The value of $\rho$ is chosen dependent on $H$ such that noise accounts for $1/3$  of the log-return variance (i.e., such that $\rho^2 \Den^{2H-1} / (\rho^2 \Den^{2H-1} + \si^2) =\frac13$). This choice roughly matches the empirical results of \cite{AY09}.

We choose $q_n=1.645$ as the 95\%    standard normal quantile, which corresponds to an initial test for the presence of noise using \eqref{eq:test} at a 5\%-level. If ``no noise'' is rejected, we compute an estimate   $H^n$ following the procedure described in Section~\ref{sec:fsc} using five days of simulated data. We choose  $R=10$, which  corresponds to considering autocovariances up to a lag of ten seconds. Furthermore, we choose $k_n=300\approx 2\Den^{-1/2}$, which corresponds to computing the local autocovariances $\wh m^n_i$ in \eqref{eq:acov} over 5-minute intervals. For the computation of $\wh \Sig_n$, we take the Parzen kernel $K(\ell,\ell_n)=k(\ell/(\ell_n+1))$, where $k(x)=(1-6x^2+6x^3)\bone_{\{x\leq 1/2\}}+2(1-x)^3\bone_{\{x>1/2\}}$ and $\ell_n$ is selected according to the optimal procedure of \cite{NW94}. This guarantees that $\wh \Sig_n$ is positive semidefinite in finite samples and that the optimal $\ell_n$, which is of order $\Den^{-1/5}$, satisfies the rate conditions of Corollary~\ref{cor:feas} if $k_n$ is of order $\Den^{-1/2}$. 
The weight matrix $\wh \calw_n$ is chosen as $\wh W_n = (\wh\Sig_n)^{-1}$ in order to obtain an optimal GMM procedure.
 
\begin{table}[ht!]
	\setlength{\tabcolsep}{0.09cm}
	\spacingset{1}
	\begin{center}\small 
		\caption{Bias,  SE and RMSE of $H^n$}\label{tab:H}
		\begin{tabularx}{\textwidth}{Xccccccccccc}
			\toprule
			$H$    & 0 & 0.05 &0.10 &0.15&0.20&0.25&0.30&0.35&0.40&0.45&0.50  \\
			\hline

			Bias &0.0091& -0.0005& -0.0007 &-0.0007 &-0.0008 &-0.0010 &-0.0012 &-0.0016 &-0.0042 &-0.0255 &-0.0184\\
			SE & 0.0122 &0.0218 &0.0222 &0.0226 &0.0235 &0.0251 &0.0280 &0.0333 &0.0420 &0.0625 &0.0890  \\ 
			RMSE&0.0153 &0.0218 &0.0222 &0.0226 &0.0235 &0.0251 &0.0280 &0.0334 &0.0423& 0.0675 &0.0912\\
			\bottomrule
		\end{tabularx}
	\end{center}
\end{table}
As we can see from Table~\ref{tab:H}, the resulting estimator $H^n$ is essentially unbiased, except when $H$ is very close to $0$ or $0.5$ (but even in this case, the bias is very small). As a result, the main contribution to the root-mean-square error (RMSE) of $H^n$ is the standard error (SE), which is increasing in $H$. This shows that given the same signal-to-noise ratio, estimating $H$ becomes more difficult as $H$ approaches $0.5$. This is reasonable as it is impossible  in the limit as $H\to0.5$ to distinguish a fractional from a semimartingale process. At $H=0.5$, the RMSE of $H^n$ is completely due to the roughly 5\% of cases where $\wh T^n$ from \eqref{eq:test} falsely detects the presence of noise.

Next, we study the distribution of the   pivotal quantity $(H^n-H)/\sqrt{\mathbb{V}^H_n}$. As Figure~\ref{fig:stand.H} shows, for all considered values of $H$ except  $H\in\{0.05,0.40,0.45\}$, the sample quantiles of $(H^n-H)/\sqrt{\mathbb{V}^H_n}$ match standard normal quantiles quite well, confirming the finite-sample reliability of the distributional approximations in  Theorem~\ref{thm:main} and Corollary~\ref{cor:feas} for inferential purposes. If $H=0.05$ (resp., $H\in\{0.40,0.45\}$), the low (resp., high) quantiles of $(H^n-H)/\sqrt{\mathbb{V}^H_n}$  are essentially flat, while higher (resp., lower) quantiles   are approximately   normal. This, of course, is due to the fact that $H^n \in [0,0.5]$ by construction. 
\begin{figure}[ht!]
	\centering
	
	\includegraphics[width=\textwidth]{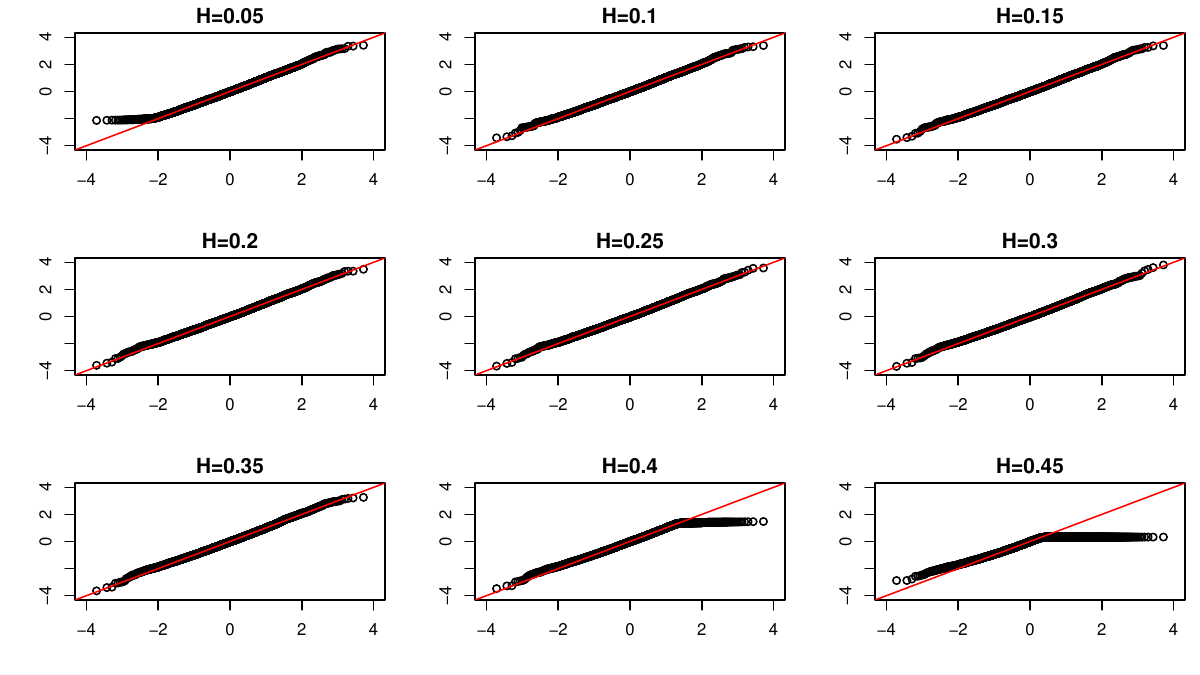}

	\caption{Sample quantiles of $(H^n-H)/\sqrt{\mathbb{V}^H_n}$ against standard normal quantiles.}\label{fig:stand.H}
\end{figure}

Finally, in Figure~\ref{fig:H}, we   compare our estimator $H^n$ 
with three alternatives:  the  estimator $ \wt H^n_{\text{DMS}}=\frac12 (1+\log_{2+} [({\wh V^{n/4}_{0,t}-\wh V^{n/2}_{0,t}})/({\wh V^{n/2}_{0,t}-\wh V^{n}_{0,t}}) ])$ of \cite{Dozzi15},
where  $\log_{2+} x=\log_2 x$ if $x>0$ and $\log_{2+}x=0$ otherwise; the  estimator $ \wt H^n_{\text{VS}}=\tfrac12(\wt\beta^n_{\text{VS}}+1)$ based on volatility signature plots, 
where $\wt\beta^n_{\text{VS}}$ is the slope estimate in  a  linear regression of $\log \wh V^{n/i}_{0,t}$ on $\log i$ for $i=1,\dots,20$; 
and the autocorrelation estimator $\wt H^n_{\text{acf}}$ from \eqref{eq:acf}. 
\begin{figure}[ht!]
	\centering
	
	\includegraphics[width=0.45\textwidth]{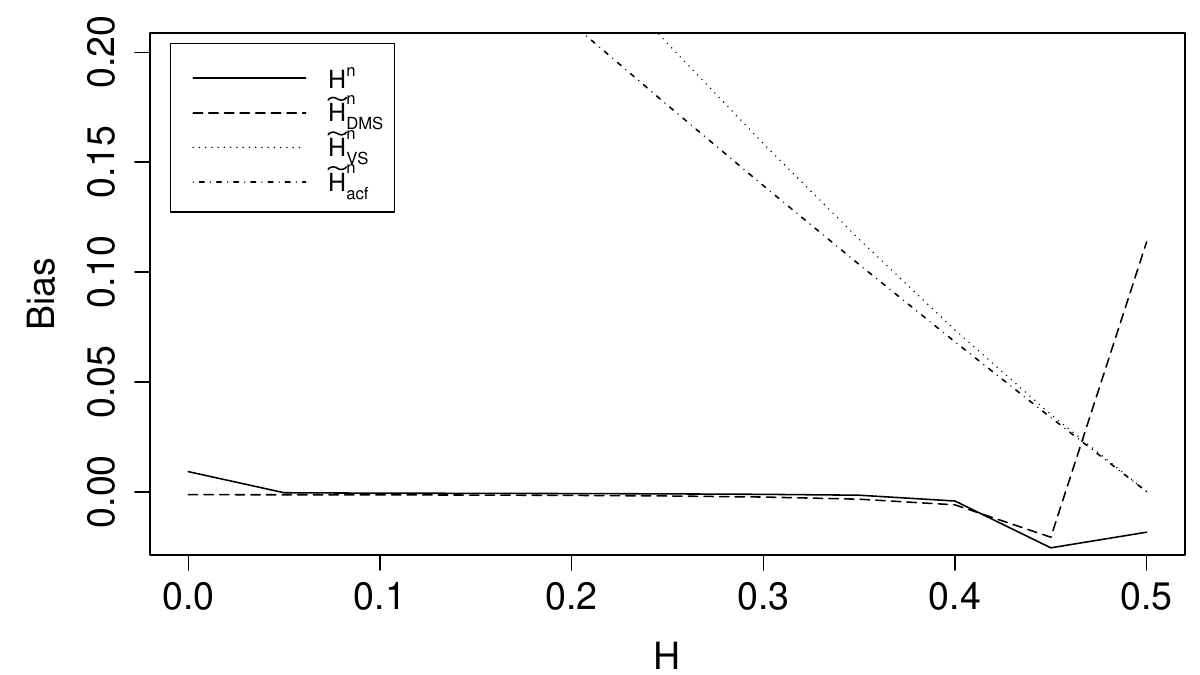}
	\includegraphics[width=0.45\textwidth]{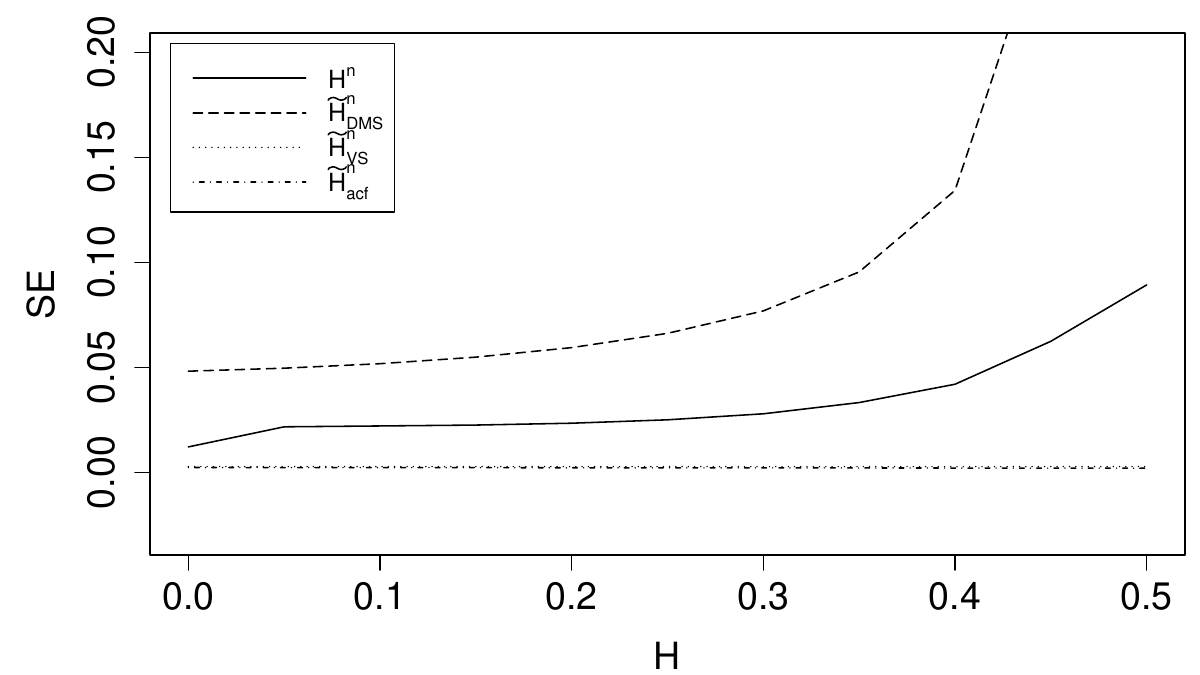}
	\includegraphics[width=0.45\textwidth]{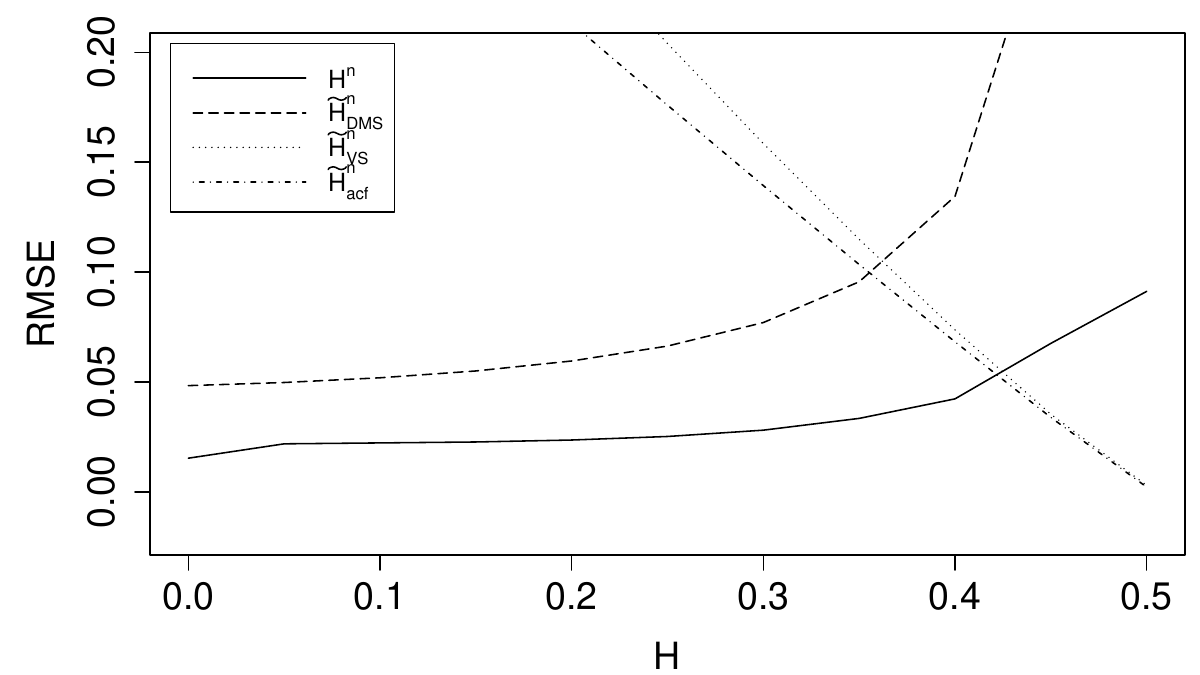}
	\caption{Bias,  SE and RMSE of $H^{n}$, $\wt H^{n}_{\text{DMS}}$, $\wt H^n_{\text{VS}}$ and $\wt H^{n}_{\text{acf}}$ in absolute numbers.}\label{fig:H}
\end{figure}

As the first plot of Figure~\ref{fig:H} shows, the estimators based on   volatility signature plots and   first-order autocorrelation   have large  upward biases except when $H$ is very close to $\frac12$. The estimator by \cite{Dozzi15} is essentially bias-free except for $H=0.5$ where it shows a large upward bias. On top of that, as the second plot shows, the SE of this estimator explodes as $H$ approaches $0.5$, confirming \cite{Dozzi15}'s observation that this estimator is highly unstable. For most values of $H$, our estimator $H^n$ achieves the best RMSE results, which confirms the benefit of  initially testing for the presence of noise as a bias--variance trade-off.

Next, we turn to  volatility estimation.
Having obtained an estimate of $H$, we estimate $C_T-C_{T-1}$ and $\Pi_T-\Pi_{T-1}$, that is, price and noise volatility on the last trading day by repeating steps (ii) and (iii) in Section~\ref{sec:fsc} but with $H$ fixed at the previously obtained estimate. 

\begin{table}[ht!]
	\setlength{\tabcolsep}{0.20cm}
	\spacingset{1.2}
	\begin{center}\small 
		\caption{Bias,  SE and RMSE of $C^n/\si^2$ and $\Pi^n/\rho^2$}\label{tab:sigrho}
		\begin{tabularx}{\textwidth}{X|ccccccccccc}
			\toprule
			$H$    & \multicolumn{5}{c}{Quantiles of $C^n/\si^2$} && \multicolumn{5}{c}{Quantiles of $\Pi^n/\rho^2$}  \\
			\cline{2-6}  \cline{8-12}  
			& 2.5\% & 25\%& 50\% & 75\%& 97.5\%&  & 2.5\% & 25\%& 50\% & 75\%& 97.5\% \\
			\cline{2-12}
			0 &0.9545 &0.9809 &0.9943 &1.0076 &1.0321& &0.9513 &1.0159 &1.0696 &1.3740& 2.4227\\ 
			0.05 & 0.9517& 0.9827& 0.9993 &1.0166 &1.0460& & 0.3849& 0.7162 &0.9946& 1.3753& 2.4168 \\ 
			0.10&0.9460 &0.9810& 0.9997& 1.0184 &1.0512 & & 0.3752 &0.7144& 0.9898 &1.3751 &2.4606\\
			0.15&0.9371& 0.9782& 0.9997& 1.0212 &1.0578& & 0.3600& 0.7091& 0.9902 &1.3895 &2.5175\\
			0.20 & 0.9234& 0.9743& 1.0003 &1.0249& 1.0670 & &0.3394& 0.6899 &0.9891 &1.4025 &2.6643\\
			0.25 & 0.8997& 0.9667 &1.0006 &1.0314 &1.0816& & 0.3034& 0.6625 &0.9831 &1.4453 &2.9399\\
			0.30 & 0.8488 &0.9534 &1.0013 &1.0411 &1.1057 & & 0.2553& 0.6195& 0.9831 &1.5328 &3.5292\\
			0.35 & 0.7008 &0.9245 &1.0027 &1.0613& 1.1495& & 0.1810& 0.5461& 0.9797 &1.7186 &5.2889\\
			0.40 & 0.4371 &0.8262 &1.0043 &1.1116 &1.2342& &0.0830 &0.4092& 0.9685& 2.3267& 6.6144\\
			0.45 &0.5378 &0.6161 &1.0215 &1.2578 &1.3914& & 0.0054 &0.1464 &0.9661 &2.8184 &3.1781\\
			0.50 &0.9758 &0.9930& 0.9996& 1.0060 &1.0174& & -& -& -& - &-  \\
			\bottomrule
		\end{tabularx}
	\end{center}
\end{table}

Table~\ref{tab:sigrho} summarizes the performance of $\Pi^n$ and $C^n$ as respective estimators of $\Pi_T-\Pi_{T-1}$ and $C_T-C_{T-1}$. For all values of $H\leq 0.30$, the performance of $C^n$ is quite good, with a relative error of less than 5\% (resp., 16\%) in 50\% (resp., 95\%) of the cases. For $H=0.5$, the performance is very good, too, with less than 3\% error in 95\% of the cases. The most difficult case is when $H$ is relatively close to  but not equal to $0.5$. While $C^n$ remains essentially unbiased in this case, the relative error increases with $H$. This is expected as it becomes increasingly more difficult as $H\to0.5$   to statistically distinguish a semimartingale process from a fractional one (the two being indistinguishable in the limit $H=0.5$). The results for the noise volatility estimator $\Pi^n$ are qualitatively similar, except that the dispersion of estimates is generally higher in comparison with $C^n$. One explanation is that noise is smaller than volatility in our simulation (and typically in practice as well).

An interesting observation is that even for $H\leq 0.25$, our estimator $C^n$ yields very precise estimates in the simulation study, although $C_T$ cannot be consistently estimated for $H<\frac14$ according to Proposition~\ref{prop:imposs}. This is because we have fixed the same noise-to-signal ratio for all values of $H$, which implies that $\rho$ is smaller for smaller values of $H$. If we had fixed the same $\rho$ for all values of $H$, then as $\Den\to0$, the percentage of log-return variance explained by noise increases very fast to  100\% for small values to $H$, which is quite different from what has been observed in practice \citep{AY09}.

\section{Empirical analysis}\label{sec:emp}

We   apply our estimators $H^n$, $C^n$ and $\Pi^n$  to  SPY transaction data from 2013--2022.  In  Appendix~\ref{sec:quote} of the supplement, we carry out a similar   analysis for transaction data of single-name stocks.
For each trading day in the ten-year period, we collect all  trades 
from 9:30am to 4:00pm Eastern Time from the TAQ database. We apply mild data cleaning procedures and sample in calendar time every  second using the previous-tick method.\footnote{\spacingset{1}\footnotesize We exclude trades with  exchange codes \texttt{D} and \texttt{S}. Furthermore, we only keep trades with trade condition indicator equal to \texttt{0}, \texttt{00}, \texttt{1} or \texttt{01} and a trade condition that is either empty or equal to  \texttt{@}, \texttt{C}, \texttt{E}, \texttt{F}, \texttt{I}, \texttt{M}, \texttt{O}, \texttt{Q}, \texttt{6} or any combination thereof. To account for outliers due to, for instance, big jumps, we further remove log-returns exceeding in absolute value three times the standard deviation of log-returns of the same day.} 
All tuning parameters are chosen exactly as in the Monte Carlo. We compute estimates of $H$ on a moving window of five business days and use these estimates to compute daily estimates of integrated price and noise volatility.

\begin{table}[htb!]
	\setlength{\tabcolsep}{0.09cm}
	\spacingset{1}
		\vspace{\baselineskip}
	\begin{center}\small 
		\caption{Quantiles, mean and standard deviation of daily estimates of $H$ and of the NSR}\label{tab:summary}
		\begin{tabularx}{\textwidth}{X|ccccc|cc}
			\toprule
			& 2.5\% Qu. & 25\% Qu.& 50\% Qu.& 75\% Qu.& 97.5\% Qu.     & Mean & SD \\
			\hline
			Estimates of $H$     & 0.0010 &0.2344 &0.3136 &0.3780  &0.5000  &0.2963 & 0.1285  \\
			Estimates of NSR &0.0000 &0.1478 &0.3386& 0.5722& 0.8973 &   0.3677 & 0.2537   \\
			\bottomrule
		\end{tabularx}
	\end{center}
	\vspace{-\baselineskip}
\end{table}
Table~\ref{tab:summary} shows summary statistics of the daily estimators of $H$ and of the  noise-to-signal ratio (NSR)  $\Pi^n\Den^{2H^n-1}/(\Pi^n\Den^{2H^n-1}+C^n)$ for the whole ten-year period. Next, we show in Figure~\ref{fig:H:nsr}  the empirical distributions of the estimates of $H$ and the NSR separately for each year. While the average estimate of the noise roughness parameter remains in the region $[0.25, 0.35]$ for all years, without a prominent trend, the shape of the distribution does change over the years, from a more concentrated distribution in earlier years towards a more spread out one in recent years. Also, the number of days with no or almost no noise (i.e., $H$ close to $0.5$) tends to be much higher at the end than at the beginning of the considered period. This is in line with the histograms of the NSR estimates, which show a concentration around smaller values in recent years. In summary, while the average roughness of noise appears to be relatively stable, the average magnitude of noise (relative to volatility) seems to decrease over time. This is in agreement with other research (see e.g., \cite{AitSahalia19}) showing that the level of noise in high-frequency return data has been decreasing in recent years.

\begin{figure}[htb!]
	\centering
	\begin{subfigure}{.5\textwidth}
		\centering
		\includegraphics[width=0.9\linewidth]{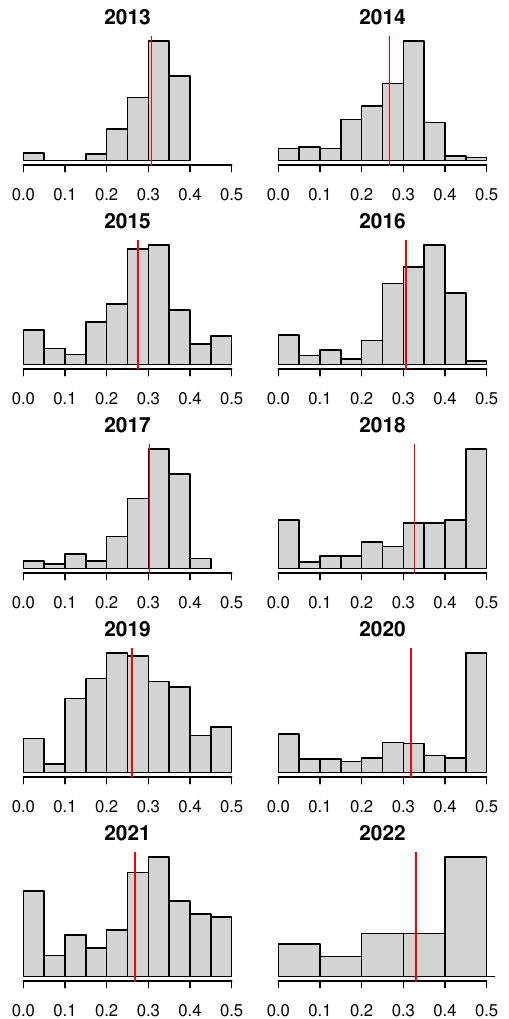}
		\caption{Estimates of $H$}
	\end{subfigure}%
	\begin{subfigure}{.5\textwidth}
		\centering
		\includegraphics[width=0.9\linewidth]{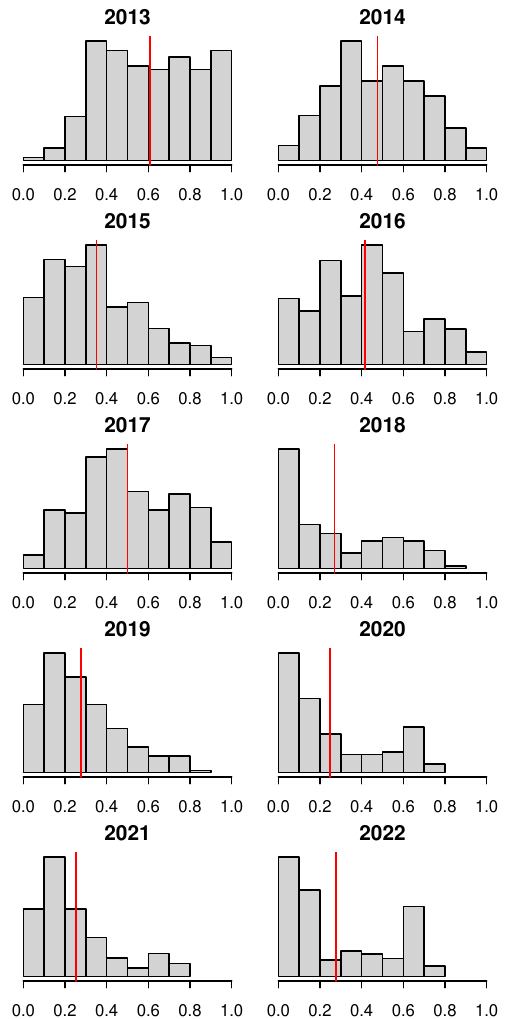}
		\caption{Estimates of NSR}
	\end{subfigure}
	\caption{Histogram of daily estimates of $H$ (a) and  of the NSR (b) based on 1 second SPY transaction data over a period of ten years, with the mean indicated by a red line.}\label{fig:H:nsr}
\end{figure}

To further understand the time-dependence of our estimates, we plot as a function of time
the daily estimates of $H$ (including 95\%-confidence intervals) in Figure~\ref{fig6}
\footnote{\spacingset{1}\footnotesize To reduce oscillations, we show moving averages of $H$-estimates obtained as follows: for each day $i$, we compute an estimate $H^n_i$ using data of the immediate  past five days. If $H^n_i=0.5$ (i.e., no noise is detected), we plot this estimate of $H$ without confidence intervals. If $H^n_i<0.5$ (i.e., there is noise), we plot  $\sum_{j=0}^3 a^n_{ij}H^n_{i-5j}$ as a point estimate of $H$ together with  corresponding confidence intervals. The weights $a^n_{ij}$ are chosen to sum up to one and inversely proportional to the estimated asymptotic variance of $H^n_{i-5j}$. As each $H^n_{i-5j}$ is an asymptotically unbiased estimator of $H$ and $H^n_{i}$, $H^n_{i-5}$, $H^n_{i-10}$ and $H^n_{i-15}$ are asymptotically independent, this choice minimizes the asymptotic mean-squared error among all convex combinations of $H^n_{i}$, $H^n_{i-5}$, $H^n_{i-10}$ and $H^n_{i-15}$. If any of the estimates $H^n_{i-5j}$ with $j=1,2,3$ equals $0.5$ (i.e., the test in (i) of Section~\ref{sec:fsc} fails to detect noise), we exclude it by setting its weight to $0$.}
and of volatility and the NSR in Figure~\ref{fig:vol:nsr}.
While the estimates of $H$ exhibit time-dependence in all considered years, the time variation is stronger in later years, confirming our earlier observation that the distribution of $H$   becomes less concentrated around its mean recently. At the same time, the confidence intervals for $H$ are typically wider in the second half of the considered data. This is line with our previous observation that the NSR decreases over time, which makes inference of $H$ harder. We also note that for most of the time, $H$ is significantly different from $0$ (white noise case) and from $\frac12$ (noise-free case), indicating the presence of rough noise in the data. A notable exceptionis the period around the onset of the COVID-19 pandemic in spring 2020, where the data almost appears as noise-free.
\begin{figure}[tb!]
	\centering
	\includegraphics[width=\linewidth]{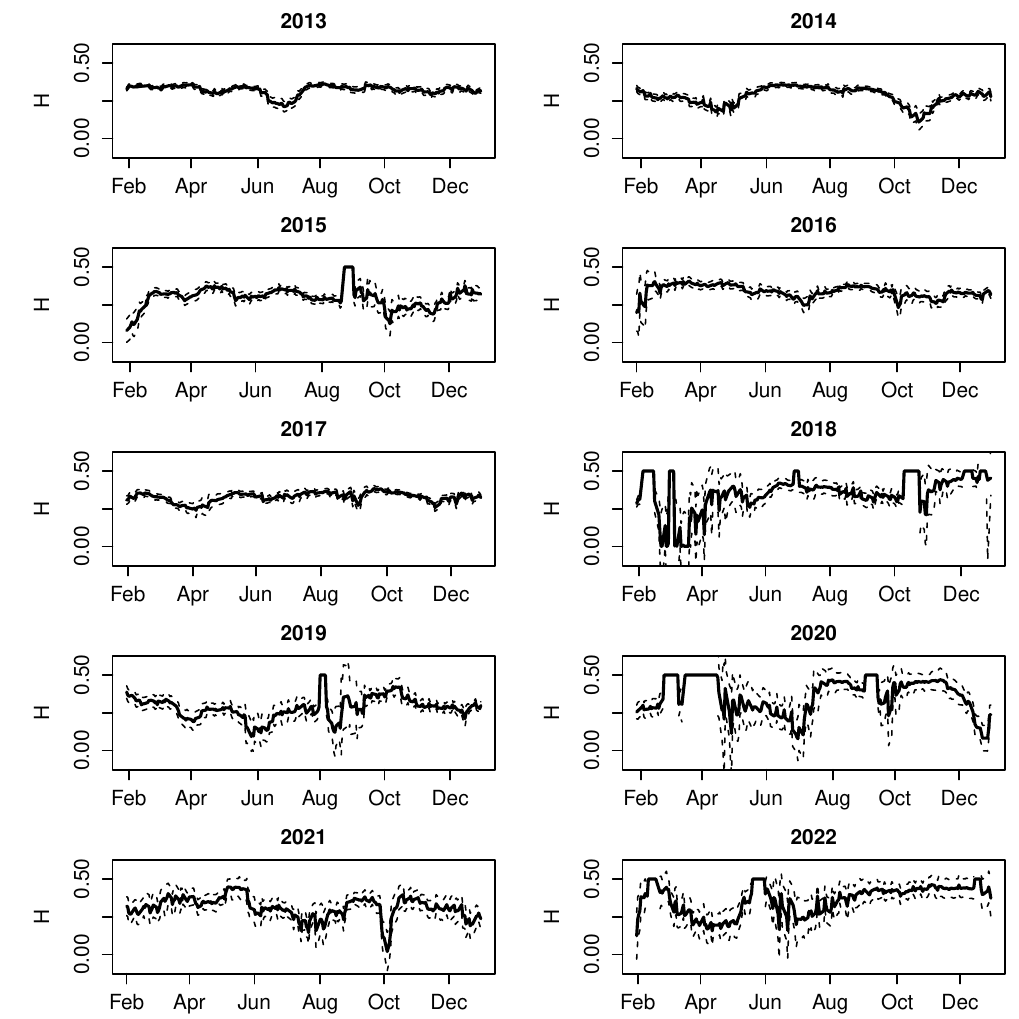}
	\caption{Daily estimates of $H$ with asymptotic 95\%-confidence intervals.}\label{fig6}
\end{figure}
\begin{figure}[htb!]
	\centering
	\includegraphics[width=\linewidth]{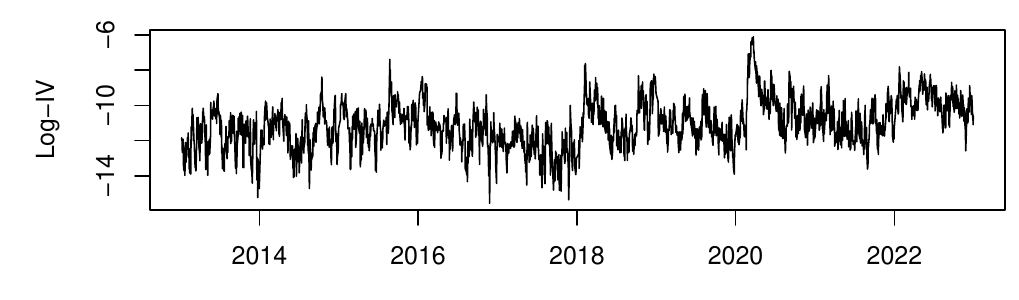}
	\includegraphics[width=\linewidth]{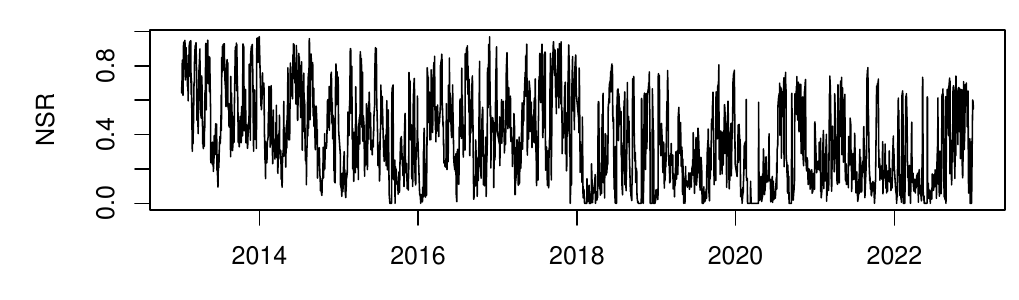}
	\caption{Daily estimates of log-integrated variance and of the NSR.}\label{fig:vol:nsr}
\end{figure}

Let us mention that the empirical evidence of rough noise reported in this section is not of universal nature and depends strongly on the considered asset and the time period. For instance, when analyzing single-name stocks in Appendix~\ref{sec:quote}, we find substantial variation in the roughness of noise, with some stocks such as Intel (INTC) or Coca-Cola (KO) exhibiting rough noise throughout most days in 2019, while other stocks such as Travelers (TRV) or United Technologies (UTX) showed little roughness during the same period. Finally, let us stress that our empirical findings also depend on the data cleaning method applied to the raw data. For instance, if we only kept trades of SPY on NYSE and NASDAQ (which is also a common cleaning procedure), then the resulting price series is almost noise-free in the most recent years of the sample.

\section{Conclusion and future directions}\label{sec:disc}
Volatility estimation based on high-frequency return data is often impeded by the presence of market microstructure noise.
In this paper, we propose to model microstructure noise as a continuous-time rough stochastic process. A distinctive feature of these mixed semimartingale models is a non-shrinking noise component with shrinking increments, which can explain a rich variety of scaling exponents in volatility signature plots.

Using
 CLTs for variation functionals and a GMM approach, we construct consistent and asymptotically mixed normal estimators for the roughness parameter $H$ of the noise and the integrated price and noise volatilities, whenever these quantities are identifiable. In an empirical application, we find  evidence of rough noise in high-frequency return data.

In this first paper, we do not examine the effect of jumps \citep{AitSahalia09,Jacod14} or irregular observation times \citep{BN05,Chen20,Jacod17, Jacod19} on our estimators. Similarly, the current mixed semimartingale model does not capture rounding effects in observed prices \citep{AitSahalia14,Delattre97,Robert10,Robert12}, which are particularly relevant at the highest sampling frequencies.  We leave it to future research to develop estimators that are robust to the aforementioned features of high-frequency data.

A current shortcoming of the mixed semimartingale model is that price volatility cannot  be consistently estimated  for $H<\frac14$. At the same time, our simulation study shows that the estimators of volatility perform very well, even for $H<\frac14$, at practically relevant levels of the noise-to-signal ratio. Therefore, an interesting future direction of research is to examine whether, and how, price volatility can be consistently estimated for all values of $H$ if the  noise volatility coefficient is assumed to be shrinking.

\appendix

\section{Does microstructure noise exist in continuous time?}\label{sec:noisecont}

In the classical  \cite{Roll84} model of transaction prices, deviations of the observed from the efficient price are due to bid--ask bounces associated to each single trade. This raises the  question whether Assumption~\ref{ass:Z}, which postulates the existence of noise  in continuous time, is appropriate. Moreover, another important source of noise  is the discreteness of prices (see \cite{Harris90a, Harris90b} and \cite{Delattre97, Li07, Robert10, Robert12,Rosenbaum09}), which is clearly  not satisfied by   \eqref{mix:SM:mod}.

These seeming contradictions between classical market microstructure theory and our  mixed semimartingale model   can be resolved by taking into account the time scale at which prices are observed. At low to medium frequency (e.g., if $\Delta_n\geq 5\,\mathrm{min}$) and for liquid assets, it is a well established practice  to consider noise as negligible and observed prices as essentially following semimartingale processes.\footnote{\spacingset{1} \footnotesize  This property can be realized in our model: The size of increments of $Z$ over large time intervals is determined by the behavior of the kernel $g_0$ in \eqref{kernel:g} for large $t$, which is not further specified in our model. For instance, if $Z$ is a standard fBM with $H\in(0,\frac12)$, $Z_{s+t}-Z_s$ is of lower order than $X_{s+t}-X_s$ for large $t$, so the effect of noise is negligible.} As $\Den$ enters a high-frequency regime, noise becomes noticeable and even dominates when $\Den$ approaches a few seconds. Finally, at ultra-high frequency, eventually all trades are recorded tick by tick and both transaction times and observed prices become discrete. 

Without doubt, estimating volatility using tick-by-tick data (see, for example, \cite{Jacod19,Li14, Robert10,Robert12}) necessitates a careful modeling of rounding effects and bid--ask bounces in prices. However, as we can see from Figure~\ref{fig:path}, prices sampled at 1 second in our 2019 SPY data do not show much discreteness or flat periods as opposed  to, for example, a typical price path in 1999, which was before the decimalization on US stock exchanges. This is in agreement with our previous observation from Figure~\ref{fig1} (b) that price increments are still shrinking\footnote{\spacingset{1} \footnotesize  An important detail: to calculate the  variance of increments, we exclude periods of no observations (as they would artificially lower the  variance) but include zero returns between  identical observed prices.} at the frequencies we consider (rounding errors would induce a flattening in variance plots). As a result, rounding effects and bid--ask bounces do not seem to be the dominant source of noise in the data and at the frequency we consider.

\setcounter{figure}{7} 
\begin{figure}[htb!]
	\centering
	\begin{subfigure}{.5\textwidth}
		\centering
		\includegraphics[width=0.95\linewidth]{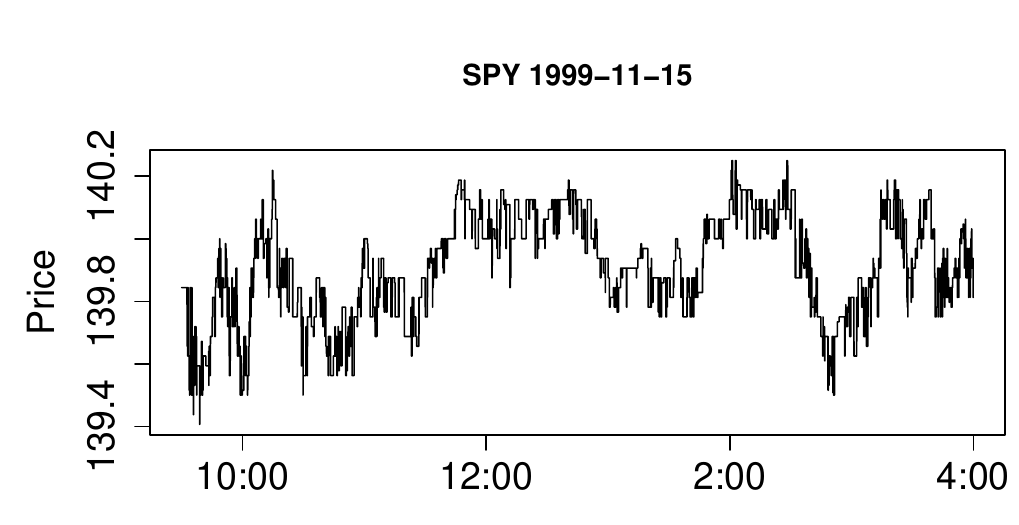}
	\end{subfigure}%
	\begin{subfigure}{.5\textwidth}
		\centering
		\includegraphics[width=0.95\linewidth]{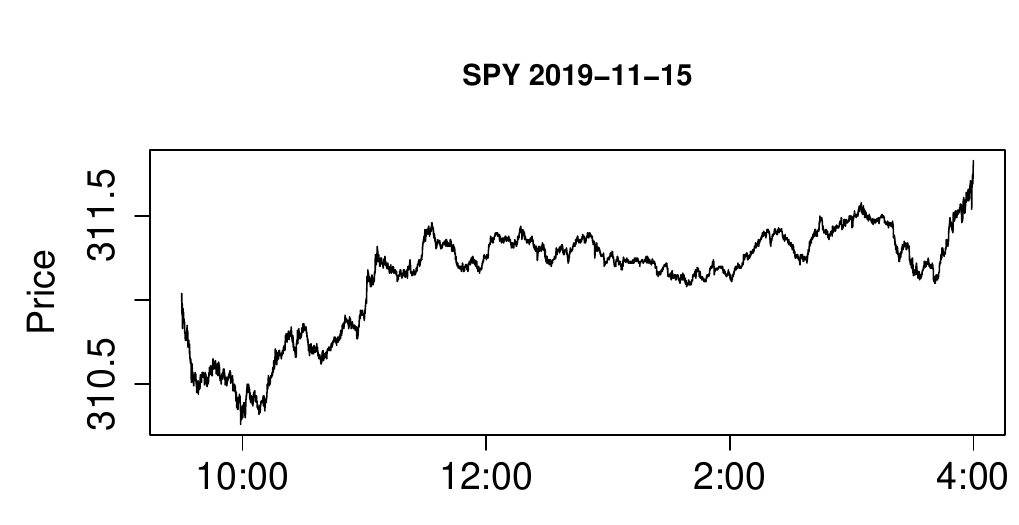}
	\end{subfigure}
	\caption{Two paths of SPY transaction prices, one from 1999 and one from 2019.}\label{fig:path}
\end{figure}

Next, we give two possible explanations for the existence of microstructure noise in continuous time. Both are related to the very reason why the efficient price  $X$ is typically assumed to be a semimartingale. First, according to the fundamental theorem of asset pricing, the absence of arbitrage in an idealized frictionless market implies that prices  must be semimartingales \citep{Delbaen94}. Real markets, of course, have transaction costs (e.g.,  bid--ask spreads and commissions). Transaction costs  do not only generate trade-specific noise  in the form of bid--ask bounces (as in the \cite{Roll84} model), but have the effect that the absence of arbitrage no longer implies the semimartingale property for prices. For example, both fBM and mfBM (which are special cases of our model) are known to not admit arbitrage in the presence of transaction costs  \citep{Cherny08, Guasoni08,Jarrow09}. In other words, even if noise due to trading mechanisms is taken away, transaction costs may lead to an additional continuous noise component.

Second, as shown by \cite{AitSahalia20},  many microscopic models of tick-by-tick data are  compatible (i.e., functionally converge in law to) macroscopic  semimartingale models as time is stretched out. In this framework, microstructure noise can be viewed as the difference between the limiting semimartingale process $X$ and the microscopic tick-by-tick observed price process $Y$ (which evolves as a continuous-time but piecewise constant process). In this approach, the microstructure noise process $Z=Y-X$ is, by definition, a continuous-time process. Moreover, since it bridges a microscopic model with a classical white or colored noise as in \eqref{eq:nonvan} (``$H=0$'') and a noise-free macroscopic model (``$H=\frac12$''), it seems reasonable to assume a locally fractional nature for $Z$ with some $H\in(0,\frac12)$.

Finally, let us remark that including both a discrete and a continuous noise component would probably yield the most satisfying solution; but this is beyond the scope of the current paper. Also, a theoretical substantiation of the arguments in the previous paragraph (e.g., by exhibiting a  tick-by-tick price model that converges to a mixed semimartingale on an intermediate time scale) remains open and is left to future research.

\section{A multivariate central limit theorem for variation functionals}\label{sec:mult}

Theorem~\ref{thm:CLT:mixedSM} can be extended to a multivariate setting that covers variation functionals of the form
\begin{equation*}
	V^n_f(Y, t) = \Delta_n \sum_{i = 1}^{[t/\Del] - L +1} f \bigg( \frac{\un{\Delta}_i^n Y}{\Den^H} \bigg),
\end{equation*}
where $f \colon \bbr^{d \times L} \to \bbr^M$ is some test function ($L, M \in \bbn$), $Y$ is a $d$-dimensional process and 
\begin{equation} \label{not:tensor}
	\begin{split}
		\Delta_i^n Y = Y_{i \Delta_n}- Y_{(i-1) \Delta_n} \in \bbr^d, \quad 
		\un{\Delta}^n_i Y = (\Delta_i^n Y, \Delta_{i+1}^n Y, \ldots, \Delta_{i+L-1}^n Y) \in \bbr^{d \times L}.
	\end{split}
\end{equation} 
In the following set of hypotheses, which is a direct multivariate extension of Assumption~\ref{Ass:A-1}, $\Vert \cdot \Vert$ denotes the Euclidean norm (in $\bbr^n$ if applied to vectors and in $\bbr^{nm}$ if applied to a matrix in $\bbr^{n \times m}$).
\settheoremtag{(CLT$_d$)}
\begin{Assumption} \label{Ass:A} The observation process $Y$ is given by the sum of $X$ from \eqref{eq:X} and $Z$ from \eqref{eq:Z} with the following specifications:
	\begin{enumerate}
		\item[(i)]
		The function $f \colon \bbr^{d \times L} \to \bbr^M$ is even and infinitely differentiable. Moreover, all its derivatives (including $f$ itself) have at most polynomial growth.
		\item[(ii)] Both $B$ and $W$ are independent  standard $\bbf$-Brownian motions in $\R^d$, the drift $a$ is $d$-dimensional, locally bounded and $\F$-adapted, and $\si$ is an $\F$-adapted locally bounded   $\R^{d\times d}$-valued process such that for every $T>0$, there is $K_1\in(0,\infty)$ with
		\begin{equation} 
			\bbe \Big[ 1\wedge  \Vert \si_t - \si_s   \Vert  \Big] \leq K_1 \vert t -s \vert^{\frac{1}{2}},\qquad s,t\in[0,T].
		\end{equation}
		\item[(iii)] The noise volatility process $\rho$ takes the form
		\begin{equation} \label{repr:si-2}
			\rho_t = \rho^{(0)}_t + \int_{0}^{t} \wti{b}_s \, \dd s + \int_{0}^{t} \wti{\rho}_s \, \dd \wt W_s,\qquad t \geq 0.
		\end{equation}
		In \eqref{repr:si-2},  $\wt W$ is standard $\bbf$-Brownian motion in $\R^d$ that is jointly Gaussian with $(B,W)$;  $\wti{b}$ is $d \times d$-dimensional, locally bounded and $\F$-adapted;  $\rho^{(0)}$ is an $\F$-adapted locally bounded $\bbr^{d \times d}$-valued process such that for all $T>0$,
		\begin{equation} \label{mom:ass:si:rho:0-2}
			\bbe \Big[ 1\wedge\Vert \rho_t^{(0)} - \rho_s^{(0)}  \Vert  \Big]  \leq K_2\vert t - s \vert^{\ga},\qquad s,t\in[0,T],
		\end{equation}
		for some $\ga \in  ( \frac{1}{2}, 1  ]$ and $K_2\in(0,\infty)$; and $\wti{\rho}$ is an $\F$-adapted locally bounded $\bbr^{d \times d \times d}$-valued process (e.g., the $(ij)$th component of the stochastic integral in \eqref{repr:si-2} equals $\sum_{k=1}^{d} \int_{0}^{t}\wti{\rho}^{ijk}_s \, \dd \wt W^k_s$) such that for all $T>0$, there exist   $\eps > 0$ and $K_3\in(0,\infty)$ with	
		\begin{equation} \label{reg:ass:si:ti-2}
			\bbe \Big[ 1\wedge\Vert \wti{\rho}_t - \wti{\rho}_s  \Vert   \Big]  \leq K_3 \vert t -s \vert^{\eps},\qquad s,t\in[0,T].
		\end{equation}
		\item[(iv)] We have \eqref{kernel:g} with $H\in(0,\frac12)$ and some $g_0\in C^\infty([0,\infty))$ with $g_0(0) = 0$.	
	\end{enumerate}
\end{Assumption}

Before we can state the multivariate extension of Theorem~\ref{thm:CLT:mixedSM-1}, we need   some more notation.
Define $\mu_f$ as the $\bbr^{M}$-valued function that maps  $v=(v_{k \ell, k' \ell'})\in(\R^{d\times L})^2$ to $\mathbb{E}[f(\calz)]$ where $\calz\in \R^{d\times L}$ follows a multivariate normal distribution with mean $0$ and $\cov(\calz_{k \ell}, \calz_{k' \ell'}) = v_{k \ell, k' \ell'}$. Note that $\mu_f$ is infinitely differentiable because $f$ is. Furthermore, if $\calz'\in\R^{d\times L}$ is such that $\calz$ and $\calz'$ are jointly Gaussian with mean $0$, covariances 
$\cov (\calz_{k\ell}, \calz_{k'\ell'}) = \cov (\calz'_{k\ell}, \calz'_{k'\ell'}) = v_{k\ell, k'\ell'}$ and cross-covariances
$\cov (\calz_{k\ell}, \calz'_{k'\ell'}) = q_{k\ell, k'\ell'}$, we define
$
\ga_{f_{m_1}, f_{m_2}}(v,q) = \cov   ( f_{m_1}(\calz), f_{m_2}(\calz')   )$ for $m_1, m_2= 1, \ldots, M.
$
We further introduce a multi-index notation adapted to the definition of $\mu_f$. For $\chi = ({\chi}_{k\ell,k'\ell'}) \in 
\bbn_0^{(d \times L) \times (d \times L)}$ and $v$ as above,   let
$
\lvert \chi \rvert = \sum_{k,k' = 1}^d \sum_{\ell,\ell' = 1}^{L} \chi_{k \ell,k'\ell'}$,  $\chi! = \prod_{k,k' = 1}^d \prod_{\ell,\ell' = 1}^{L} \chi_{k \ell,k'\ell'}!$,
$v^{\chi} = \prod_{k,k' = 1}^d \prod_{\ell,\ell' = 1}^{L} {v_{k \ell,k'\ell'}}^{{\chi}_{k \ell,k'\ell'}}$ and $\pd^{\chi} \mu_f = \frac{\pd^{\vert \chi \vert} \mu_f}{\pd v_{11,11}^{{\chi}_{11,11}} \cdots \pd v_{dL,dL}^{{\chi}_{dL,dL}}}$.
Finally, recall \eqref{num:Ga} and define,  for all $k, k' \in \{1, \ldots, d\}$,  $\ell, \ell' \in \{1, \ldots, L\}$ and $r \in \bbn_0$, 
\begin{equation} \label{mat:lim:thms:2}
	\pi_r(s)_{k \ell, k' \ell'}  = (\rho_s \rho_s^T)_{k k'} \Ga^H_{\vert \ell - \ell' + r \vert},\quad
	c(s)_{k \ell, k' \ell'}  = (\si_s \si_s^T)_{k k'} \ind_{\{\ell = \ell'\}},\quad
	\pi(s) = \pi_0(s).\!
\end{equation}

\begin{Theorem} \label{thm:CLT:mixedSM}
	Grant Assumption~\ref{Ass:A} and
	let $N(H)=[1/(2-4H)]$. Then  
	\begin{equation} \label{CLT:2} 
		\Delta_n^{- \frac{1}{2}} \Bigg\{ V^n_f(Y,t)
		- V_f(Y,t)  
		- \sum_{j=1}^{N(H)} \Den^{j (1 - 2H)}  
		\sum_{\vert \chi \vert = j} \frac1{\chi!}\int_{0}^{t} \pd^{\chi} \mu_f  (\pi(s) ) {c(s)}^{\chi} \, \dd s \Bigg \} 
		\stackrel{\mathrm{st}}{\Longrightarrow} \mathcal{Z},
	\end{equation}
	where 
	\beq\label{eq:Vft} V_f(Y,t)=\int_0^t \mu_f(\pi(s))\,\dd s\eeq
	and $\mathcal{Z} = (\mathcal{Z}_t)_{t \geq 0}$ is an $\bbr^M$-valued continuous process defined on a very good filtered extension $(\ov{\Om}, \ov{\mathcal{F}}, (\ov{\mathcal{F}}_t)_{t \geq 0}, \ov{\mathbb{P}})$ of   $(\Om, \mathcal{F}, (\mathcal{F}_t)_{t \geq 0}, \mathbb{P})$ which, conditionally on  $\mathcal{F}$, is a centered Gaussian process with independent increments and such that the conditional covariance function 
	$\calc^{m_1 m_2}_t = \ov{\bbe}[\calz^{m_1}_t \calz^{m_2}_t \mid \calf]$, for $m_1, m_2 = 1, \ldots, M$, is given by
	\begin{equation} \label{cov:fct:lim:CLT}
		\calc^{m_1 m_2}_t =\int_{0}^{t} \bigg \{  \ga_{f_{m_1}, f_{m_2}}(\pi(s), \pi(s)) 
		+ \sum_{r=1}^{\infty} \Big( \ga_{f_{m_1}, f_{m_2}} 
		+ \ga_{f_{m_2}, f_{m_1}} \Big) (\pi(s), \pi_r(s)) \bigg \} \,\dd s.
	\end{equation}
\end{Theorem}

\section{Proof of Theorem~\ref{thm:CLT:mixedSM}}\label{Sect:proof}
\subsection{Size estimates}\label{SectA}
We use the notation from the main paper. In addition, we write $A\lec B$ if there is a constant $C$ that is independent of any quantity of interest such that $A\leq CB$.
In the following, we  repeatedly make use of so-called \emph{standard size estimates}  (cf.\ \cite{Chong20supp}, Appendix D).  Under the strengthened hypotheses of Assumption~\ref{Ass:A'}, consider 
for fixed  $j,k\in \{1, \ldots,d\}$ and $\ell \in \{1, \ldots, L\}$ an expression like
\begin{equation} \label{SSE:mod}
	\begin{split}
		S_n(t) & = \Den^{\frac{1}{2}} \sum_{i = \theta_n + 1}^{[t/\Delta_n]} h(\zeta_i^n) 
		\Bigg ( \frac{\Delta^n_{i+\ell -1} A^k}{\Den^H} 
		+ \frac{1}{\Den^H} \int_{(i+\ell - 2) \Delta_n}^{(i+\ell - 1) \Delta_n} \Big( \si_s^{kj} - \si^{kj}_{(i-\theta''_n) \Den} \Big) \, \dd B_s^j 
		\\ & \qquad \qquad + 
		\int_0^{\infty} \frac{\Delta^n_{i+\ell -1} g(s)}{\Den^H} 
		\Big( \rho_s^{kj} - \rho^{kj}_{(i-\theta_n) \Den}\Big  ) \ind_{ ((i-\theta_n)\Den, (i-\theta'_n)\Den) }(s) \, \dd W^j_s \Bigg),
	\end{split}
\end{equation}
where 
$\theta_n = [\Delta_n^{-\theta}]$, $\theta_n' = [\Delta_n^{-\theta'}]$, $\theta_n'' = [\Delta_n^{-\theta''}]$ and $- \infty \leq \theta' ,\theta''< \theta \leq \infty$. In addition, $h$ is a function such that
$\lvert h(x) \rvert \lesssim 1 + \Vert x\Vert^{p }$ for some $p >1$, and $\zeta^n_i$ are random variables with
\begin{equation*}
	\sup_{n \in \bbn} \sup_{i=1, \ldots, [T/\Den]} \bbe [\Vert \zeta^n_i \Vert^p] < \infty.
\end{equation*}
For any $q\geq1$, because $a$ is uniformly bounded by Assumption~\ref{Ass:A'}, Minkowski's integral inequality yields
\begin{equation} \label{SSE:A'}
	\bbe \bigg[ \bigg \Vert \frac{\Delta^n_{i+\ell - 1} A}{\Den^H} \bigg \Vert^q \bigg]^{\frac{1}{q}} \leq 
	\frac{1}{\Den^H} \int_{(i+\ell - 2)\Den}^{(i+\ell - 1) \Den} \bbe  [ \Vert a_s \Vert^q  ]^{\frac{1}{q}} \, \dd s \lesssim \Den^{1-H}.
\end{equation}
Similarly, by the Burkholder--Davis--Gundy (BDG) inequality and Assumption~\ref{Ass:A'}, 
\begin{equation} \label{SSE:BM}
	\begin{split}
		& \bbe \bigg[ \bigg \vert \frac{1}{\Den^H} \int_{(i+\ell - 2) \Delta_n}^{(i+\ell - 1) \Delta_n} \Big( \si^{kj}_s - \si^{kj}_{(i-\theta''_n) \Den} \Big) \, \dd B^{j}_s  \bigg \vert^q \bigg]^{\frac{1}{q}} \lesssim (\theta''_n \Den)^{\frac{1}{2}} \Den^{\frac{1}{2}-H}.
	\end{split}
\end{equation}
Combining  Assumption~\ref{Ass:A'} with Lemma~\ref{lem:ker}, we deduce that
\begin{equation} \label{SSE:fBM}
	\begin{split}
		& \bbe \bigg[ \bigg \vert \int_0^{\infty} \frac{\Delta^n_{i+\ell -1} g(s)}{\Den^H} 
		\Big( \rho_s^{kj} - \rho^{kj}_{(i-\theta_n) \Den} \Big) \ind_{ ((i-\theta_n)\Den, (i-\theta'_n)\Den) }(s) \, \dd W^j_s \bigg \vert^q \bigg]^{\frac{1}{q}}   
		\\ & \qquad
		\lesssim  (\theta_n \Den  )^{\frac{1}{2}}
		\bigg(\frac{1}{\Den^{2H}} \int_0^{(i - \theta_n') \Den} \Delta^n_{i+\ell -1} g(s)^2 \, \dd s \bigg)^{\frac{1}{2}} \lesssim
		(\theta_n \Den  )^{\frac{1}{2}} \Den^{\theta'(1-H)}.
	\end{split}
\end{equation}	
Finally, using H\"older's inequality to separate $h(\zeta^n_i)$ from the subsequent expression in \eqref{SSE:mod}, we have shown that 
\begin{equation} \label{SSE:all}
	\begin{split}
		\bbe \bigg[ \sup_{t \leq T} \big \vert S_n(t) \big \vert \bigg] & \lesssim 
		\DenOneHalf \sum_{i=\theta_n +1}^{[T/\Den]}  \Big\{ \Den^{1-H} + \Den^{1-H} (\theta''_n)^{\frac{1}{2}} 
		+  (\theta_n \Den  )^{\frac{1}{2}} \Den^{\theta'(1-H)}
		\Big\} \\
		&\lesssim
		\Den^{\frac12-H} + \Den^{\frac{1}{2}-H - \frac{\theta''}{2}} + \Den^{\theta'(1-H) - \theta}.
	\end{split}
\end{equation}

The upshot of this example is that the absolute moments of sums and products of more or less complicated expressions can always be bounded term by term: for example, in \eqref{SSE:mod}, the terms
\begin{align*} &\sum_{i = \theta_n + 1}^{[t/\Delta_n]},\qquad h(\zeta^n_i),\qquad \Delta^n_{i+\ell-1} A^k,\qquad \int_{(i+\ell - 2) \Delta_n}^{(i+\ell - 1) \Delta_n} (\cdots) \, \dd B_s^j, \qquad   \si_s^{kj} - \si^{kj}_{(i-\theta''_n) \Den},\\
	& \int_0^{(i-\theta'_n)\Den} \frac{\Delta^n_{i+\ell -1} g(s)}{\Den^H} (\cdots)\, \dd W^j_s,\qquad 
	\rho_s^{kj} - \rho^{kj}_{(i-\theta_n) \Den}    \end{align*}
have \emph{sizes} (i.e., the $L^q$-moments, for any $q$, are uniformly bounded by a constant times)
$$ \Den^{-1},\qquad 1,\qquad \Den,\qquad \sqrt{\Den},\qquad (\theta''_n\Den)^{\frac12},\qquad \Den^{\theta'(1-H)},\qquad (\theta_n\Del)^{\frac12}, $$
respectively. The final estimate \eqref{SSE:all} is then obtained by combining these bounds. Clearly, size estimates can be applied to variants of \eqref{SSE:mod}, too, for example, when   the stochastic integral in \eqref{SSE:mod} is squared, when we have products of integrals, when  $S_n(t)$ is matrix-valued, etc. 

Even though size estimates are optimal in general, better estimates may be available in specific cases. One such case occurs when  sums have a martingale structure. To illustrate this, let $\calf^n_i = \calf_{i \Delta_n}$ and consider  
\begin{equation*}
	S_n'(t) = \Den^{\frac{1}{2}} \sum_{i=1}^{[t/\Den]-L+1} \varpi^n_i
\end{equation*}
with random variables $\varpi^n_i$ that are $\calf^n_i$-measurable and satisfy $\bbe[\varpi^n_i \mid \calf^n_{i - \theta'''_n}]=0$, where $\theta'''_n = [\Den^{-\theta'''}]$ for some $0 < \theta''' < 1$. Suppose that 
$\bbe [\vert \varpi^n_i \vert^2]^{1/2} \lesssim \Den^{\varpi}$ uniformly in $i$ and $n$ for some  $\varpi > 0$. Writing
$$ S'_n(t)=\sum_{j=1}^{\theta'''_n} S'_{n,j}(t),\qquad S'_{n,j}(t)=\Delta_n^{\frac12} \sum_{k=1}^{[([t/\Delta_n]-L+1)/\theta'''_n]} \varpi^n_{j+(k-1)\theta'''_n},$$
we observe that each $S'_{n,j}$ is a martingale in $t$ (albeit relative to different filtrations), so the BDG inequality   and the triangle inequality yield
\begin{equation} \label{MSE}
	\bbe \bigg[ \sup_{t \leq T} \vert S_n'(t) \vert \bigg] \lesssim  (\theta'''_n)^{\frac{1}{2}} \Den^{\varpi}.
\end{equation}
Very often, $\varpi^n_i$ will actually only be $\calf^n_{i + L - 1}$-measurable. However, a shift by $L$ increments will not change the value of the above estimate. Following \cite{Chong20a}, Section~4, we refer to \eqref{MSE} as a \emph{martingale size estimate}.

\subsection{Estimates for fractional kernels}\label{SecD}

Here we gather some useful results about the kernel $g(t)=K_H^{-1}t^{H-1/2}$ introduced in \eqref{kernel:g} (we consider the case $g_0\equiv0$ here). 
We write $g(t)=0$ for $t\leq 0$ and define for all $s, t \geq 0$ and $i, n \in \bbn$,
\begin{equation} \label{def:ker}
	\begin{split}
		\Delta^n_i g(s) & = g(i \Delta_n-s) - g((i - 1) \Delta_n-s),\\ 
		\uDeni g(s)  &=  ( \Deni g (s), \ldots, \Delta^n_{i + L - 1} g (s)  ), 
	\end{split}
\end{equation}
\begin{Lemma} \label{lem:ker}
	Recall \eqref{num:Ga}.
	\begin{enumerate}
		\item[(i)]  For any $k, n \in \bbn$,
		\begin{equation}  \label{est:ker:1}
			\begin{split}
				\int_0^{\infty} \Delta^n_k g(t)^2 \, \dd t & = K_H^{-2} \bigg\{ \frac{1}{2H} + \int_{1}^{k}  \Big( r^{H -\frac{1}{2}} - (r-1)^{H -\frac{1}{2}} \Big)^2 \, \dd r \bigg\} \Delta_n^{2H} \leq \Delta_n^{2H}.
			\end{split}
		\end{equation}
		\item[(ii)] For any $k, \ell, n \in \bbn$ with $k < \ell$,
		\begin{equation} \label{est:ker:2}
			\int_{- \infty}^{\infty} \Delta^n_k g(t) \Delta^n_\ell g(t) \, \dd t = 
			\Den^{2H} \Ga^H_{\ell - k}	
			\lesssim \Delta_n^{2H} \ov{\Ga}^H_{\ell-k},
		\end{equation}
		where $\ov{\Ga}^H_1 = \Ga^H_1$ and 
		$
		\ov{\Ga}^H_{r} =  (r - 1)^{-2(1-H)}
		$ for $r\geq2$.
		\item[(iii)] For any $\theta \in (0,1)$, setting $\theta_n = [\Delta_n^{-\theta}]$, we have for any $i > \theta_n$ and $r \in \bbn$,
		\begin{equation} \label{est:ker:tr}
			\int_{-\infty}^{(i - \theta_n) \Den} \Delta^n_i g(s) \Delta^n_{i + r} g(s) \, \dd s \lesssim \Den^{2H} \Den^{2\theta(1 - H)}.
		\end{equation}
	\end{enumerate}	
	
\end{Lemma}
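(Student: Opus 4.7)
The plan is to handle the three parts in order, reducing each to an explicit computation via the substitution $u = k\Delta_n - t$ (which turns the increments $\Delta^n_k g(t)$ into standard backward differences $g(u)-g(u-\Delta_n)$ supported on $u>0$) together with the scaling $u = r\Delta_n$ to extract the $\Delta_n^{2H}$ factor.

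For part (i), after the change of variables I split the integral at $u = \Delta_n$. On $(0,\Delta_n]$ only the term $g(u)^2 = K_H^{-2} u^{2H-1}$ survives, and direct integration gives $K_H^{-2} \Delta_n^{2H}/(2H)$. On $(\Delta_n, k\Delta_n]$ the substitution $u = r\Delta_n$ turns the integrand $(g(u)-g(u-\Delta_n))^2 = K_H^{-2}\Delta_n^{2H-1}(r^{H-1/2}-(r-1)^{H-1/2})^2$ into the claimed integral times $\Delta_n^{2H}$. The bound by $\Delta_n^{2H}$ then follows from the chosen normalization $K_H$, which is precisely the value making the bracketed expression equal $1$ in the $k\to\infty$ limit (this is the standard fBM variance normalization); I would either quote this from Mandelbrot--Van Ness or verify it by recognizing the bracket as $\mathrm{Var}(B^H_k - B^H_{k-1})$ for suitably normalized fBM.

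For part (ii), the same change of variables shows that $\int \Delta^n_k g(t)\Delta^n_\ell g(t)\,dt$ equals $\Delta_n^{2H}$ times the covariance of two increments of a unit-variance fBM at lag $\ell-k$, which by the classical formula is exactly $\Gamma^H_{\ell-k}$. For the bound by $\overline{\Gamma}^H_{\ell-k}$, when $r := \ell-k \geq 2$ I would Taylor-expand $f(x) = x^{2H}$ around $r$ to write $(r+1)^{2H}-2r^{2H}+(r-1)^{2H} = f''(\xi_1) + f''(\xi_2)$-type remainders for some $\xi_1,\xi_2\in(r-1,r+1)$, giving the bound $|\Gamma^H_r|\lesssim r^{2H-2} = (r-1)^{-2(1-H)}$ up to a constant (using $r\asymp r-1$).

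For part (iii), the key observation is that on $s\leq (i-\theta_n)\Delta_n$ one has $i\Delta_n - s \geq \theta_n\Delta_n$, so the kernel $g$ is being differenced in its smooth region, away from the singularity at $0$. I would apply the mean value theorem twice: $|\Delta^n_i g(s)| = |g(i\Delta_n - s) - g((i-1)\Delta_n - s)| \leq \Delta_n\, |g'(\eta)|$ with $\eta \geq \theta_n\Delta_n - \Delta_n \asymp \theta_n\Delta_n$, and since $|g'(u)| \lesssim u^{H-3/2}$, the product $|\Delta^n_i g(s)\,\Delta^n_{i+r}g(s)|$ is controlled by $\Delta_n^2 (i\Delta_n - s)^{2H-3}$ for $r$ of moderate size (for large $r$ a small additional argument is needed but the same bound persists after adjusting constants, since shifting by $r\Delta_n$ only changes the argument by at most a constant factor under the constraint that we integrate over $s$). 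Integrating this from $-\infty$ to $(i-\theta_n)\Delta_n$ gives $\Delta_n^2 \cdot (\theta_n\Delta_n)^{2H-2}/(2-2H)$, and since $\theta_n \asymp \Delta_n^{-\theta}$ this equals $\Delta_n^{2H+2\theta(1-H)}$ up to a constant, which is the claimed bound. The only subtle point I expect is carefully handling the case when $r$ is large (so that $(i+r-1)\Delta_n - s$ and $(i-1)\Delta_n - s$ are not of comparable size); this is resolved by bounding $|\Delta^n_{i+r} g(s)|$ separately using $|g'|$ at its own argument, which is also $\gtrsim \theta_n\Delta_n$, and using Cauchy--Schwarz together with the monotonicity of $u\mapsto u^{H-3/2}$.
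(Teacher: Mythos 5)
Your proposal is correct and follows essentially the same route as the paper: both proofs reduce to the explicit change of variables $u=i\Delta_n-s$ followed by the rescaling $u=t\Delta_n$, invoke the Mandelbrot--Van Ness representation and normalization constant $K_H$ for parts (i) and (ii), and use the mean-value theorem to bound the discrete second difference (for (ii)) and the kernel increment away from the singularity (for (iii)). One small remark on part (iii): the ``subtle point'' about large $r$ that you flag is not actually an issue, and no Cauchy--Schwarz step is required. Since $H-\tfrac32<0$, the function $u\mapsto u^{H-3/2}$ is decreasing on $(0,\infty)$, so for $s\le (i-\theta_n)\Delta_n$ and any $r\ge 1$ the factor $\lvert\Delta^n_{i+r}g(s)\rvert$ is bounded above by $\lvert\Delta^n_{i}g(s)\rvert$ uniformly in $r$ (this is the monotonicity you invoke in your last sentence); the paper simply applies this once and then integrates $(t-1)^{2H-3}$ from $\theta_n$ to $\infty$, exactly as you do after rescaling.
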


\begin{proof}
	Let $k \leq \ell$. By direct calculation,
	\begin{align*}
		& \int_0^{\infty} \Delta^n_k g(t) \Delta^n_\ell g(t) \, \dd t  \\ 
		& \qquad =
		\Den^{2H} K_H^{-2} 
		\int_{0}^{k}  \Big(r^{H - \frac{1}{2}} - (r - 1)_+^{H - \frac{1}{2}} \Big) 
		\Big( (r + (\ell - k))^{H - \frac{1}{2}} - (r + (\ell - k) - 1)_+^{H - \frac{1}{2}} \Big) \, \dd r,
	\end{align*}
	which shows \eqref{est:ker:1} by setting $k=\ell$.
	Next, let $(B^H)_{t \geq 0}$ be a fractional Brownian motion with Hurst index $H$. Then $B^H$ has the Mandelbrot--van Ness representation
	\begin{equation*}
		B^H_t = K_H^{-1} \int_{\bbr} \Big((t-s)_+^{H-\frac{1}{2}} - (-s)_+^{H-\frac{1}{2}}\Big) \, \dd \ov{B}_s, \quad t \geq 0,
	\end{equation*}	
	where $\ov{B}$ is a two-sided standard Brownian motion. Moreover, 
	$
	\Deni B^H = \int_{\bbr} \Deni g(s) \, \dd \ov{B}_s
	$
	for any $i$. Therefore, by well-known properties of fractional Brownian motion,
	\begin{equation*}
		\begin{split}
			\int_{- \infty}^{\infty} \Delta^n_k g(s) \Delta^n_\ell g(s) \, \dd s &= \bbe  [ \Delta^n_k B^H \Delta^n_\ell B^H  ]    = 
			\bbe  [ B^H_{\Delta_n} B^H_{(\ell - k + 1)\Delta_n}  ] - \bbe  [ B^H_{\Delta_n} B^H_{(\ell - k) \Delta_n}  ] \\ &  
			=\mathtoolsset{multlined-width=0.7\displaywidth} \begin{multlined}[t] \frac{1}{2 } \Big \{ \Delta_n^{2H} + ((\ell - k + 1) \Delta_n)^{2H} - ((\ell - k) \Delta_n)^{2H} \\  -
				\Delta_n^{2H} - ((\ell - k) \Delta_n)^{2H} + ((\ell - k-1) \Delta_n)^{2H} \Big \} \end{multlined}\\
			&=
			\Delta_n^{2H} \Ga^H_{\ell- k},
		\end{split}
	\end{equation*}
	which is the  equality in \eqref{est:ker:2}. Next, use the mean-value theorem twice on $\Ga^H_r$ in order to obtain for all $r \geq 2$,
	\begin{equation*}
		\begin{split}
			\Ga^H_r & = \frac{1}{2} \Big(  \{ (r + 1)^{2H} -  r^{2H}  \} -  \{  r^{2H} - (r - 1)^{2H}  \} \Big) \leq 
			\frac{1}{2}  (2H )  \Big( 
			(r+1)^{2H-1} - (r-1)^{2H-1} \Big)  \\ & \leq 
			H (2H-1) (r-1)^{2H-2},
		\end{split}
	\end{equation*}
	which shows the inequality in  \eqref{est:ker:2}. Finally, 
	\begin{align*}
		& \int_{-\infty}^{(i - \theta_n) \Den} \Delta^n_i g(s) \Delta^n_{i + r} g(s) \, \dd s \\ & \qquad =
		\Den^{2H} K_H^{-2} \int_{\theta_n}^{\infty}
		\Big(t^{H-\frac{1}{2}} - (t - 1)^{H-\frac{1}{2}}\Big) 
		\Big( (t + r)^{H-\frac{1}{2}} - (t + r - 1)^{H-\frac{1}{2}} \Big) \, \dd t \\ & \qquad \lesssim
		\Den^{2H} \int_{\theta_n}^{\infty}
		\Big(t^{H-\frac{1}{2}} - (t - 1)^{H-\frac{1}{2}}\Big)^2 \, \dd t \lesssim 
		\Den^{2H} \int_{\theta_n}^{\infty} (t - 1)^{2H-3} \, \dd t \lesssim
		\Den^{2H} \Den^{\theta  ( 2-2H  )},
	\end{align*}	
	which yields \eqref{est:ker:tr}.
\end{proof}

\subsection{Overview of the proof of Theorem~\ref{thm:CLT:mixedSM}}\label{SectB}

Throughout the proof, by a standard localization argument (cf.\ Lemma~4.4.9 in \cite{JP}), we may and will assume a strengthened version of Assumption~\ref{Ass:A}:
\settheoremtag{(CLT$_d^\prime$)}
\begin{Assumption} \label{Ass:A'} In addition to Assumption~\ref{Ass:A}, there is  $C>0$ such that 
	$$ \sup_{(\om,t)\in\Om\times[0,\infty)} \bigg\{ \lVert a_t(\om)\rVert + \lVert \si_t(\om)\rVert + \lVert \rho_t(\om)\rVert + \lVert \rho^{(0)}_t(\om)\rVert + \lVert \wt b_t(\om)\rVert + \lVert \wt \rho_t(\om)\rVert  \bigg\}<C.  $$
	Moreover, for every $p>0$, there is $C_p>0$ such that for all $s,t>0$,
	\beq\label{eq:Hoelder}\begin{split} \E[ \lVert \si_t-\si_s\rVert^p ]^{\frac 1p}&\leq C_p\lvert t-s\rvert^{\frac12},\qquad  \E[ \lVert \rho^{(0)}_t-\rho^{(0)}_s\rVert^p ]^{\frac 1p}\leq C_p\lvert t-s\rvert^{\ga},\\
		\E[ \lVert \wt\rho_t-\wt\rho_s\rVert^p ]^{\frac 1p}&\leq C_p\lvert t-s\rvert^{\eps}. \end{split}\eeq
\end{Assumption}

\begin{proof}[Proof of Theorem~\ref{thm:CLT:mixedSM}]
	Except for \eqref{CLT:fBM} below, we may and will assume that $M = 1$.
	Recalling the decomposition \eqref{kernel:g}, since $g_0$ is smooth with $g_0(0)=0$, we can use the stochastic Fubini theorem (see \cite{Protter05},  Chapter IV, Theorem 65) to write
	\begin{equation*}
		\int_{0}^{t} g_0(t-r) \rho_r \, \dd W_r  = 
		\int_{0}^{t} \bigg( \int_{r}^{t} g_0'(s-r) \, \dd s  \bigg) \rho_r \,  \dd W_r  
		= \int_{0}^{t} \bigg( \int_{0}^{s} g_0'(s-r) \rho_r \, \dd W_r \bigg) \dd s.
	\end{equation*}
	This is a finite variation process and can be incorporated in the drift process in \eqref{mix:SM:mod}. So without loss of generality, we may assume $g_0 \equiv 0$ and $g(t)=K_H^{-1}t^{H-1/2}$ in the following. Then %we write
	$
	Y_t = A_t + M_t + Z_t$, where 	$A_t = \int_{0}^{t} a_s \, \dd s$ and $M_t = \int_{0}^{t} \si_s \, \dd B_s$,
	and we have $ \un{\Delta}^n_i Y = \un{\Delta}^n_i A + \un{\Delta}^n_i M + \un{\Delta}^n_i Z$ in the notation of \eqref{not:tensor}.  Recall \eqref{def:ker}, we have, in matrix notation,
	\begin{equation*}
		\un{\Delta}^n_i Z =  \bigg(\int_0^{\infty} \Deni g(s) \rho_s \, \dd W_s, , \ldots, 
		\int_0^{\infty} \Delta^n_{i + L - 1} g (s) \rho_s \, \dd W_s \bigg) 
		= \int_0^{\infty} \rho_s \, \dd W_s \, \uDeni g(s).
	\end{equation*}

	The first step in our proof is to shrink the domain of integration for each $\un{\Delta}^n_i Z$. Let 	\begin{equation} \label{theta:value}
		\theta\in(	\tfrac{1}{4(1-H)} ,  \tfrac{1}{2}),
	\end{equation}
	which is always possible for $H\in(0,\frac12)$,
	and set 
	$\theta_n = [\Delta_n^{-\theta}]$. 
	Further define  
	\begin{equation} \label{1st:tr}
		\un{\Delta}^n_i Y^{\mathrm{tr}} = \un{\Delta}^n_i A + \un{\Delta}^n_i M + \xi_i^{n}, \qquad
		\xi_i^n = \int_{(i - \theta_n)\Delta_n}^{(i + L - 1) \Delta_n} \rho_s \, \dd W_s \, \uDeni g(s).
	\end{equation}
	\blem\label{lem:app:2}
	If $\theta$ is chosen according to \eqref{theta:value}, then 
	\[ \Del^{-\frac12}\bigg\{V^n_f(Y,t)-\Del\sum_{i = \theta_n + 1}^{[t/\Den]-L+1} f \bigg( \frac{\un{\Delta}_i^n Y^{\tr} }{\DenH} \bigg)\bigg\}\stackrel{L^1}{\Longrightarrow}0. \]
	\elem

	The last sum can be further decomposed into three parts:
	\begin{equation} \label{dec:fBM}
		\Delta_n^{\frac{1}{2}} \sum_{i = \theta_n + 1}^{[t/\Den]-L+1} f \bigg( \frac{\un{\Delta}_i^n Y^{\tr} }{\DenH} \bigg) 
		= V^n(t) + U^n(t) + \Delta_n^{\frac{1}{2}} \sum_{i = \theta_n + 1}^{[t/\Den]-L+1} 
		\bbe \bigg[f \bigg( \frac{\un{\Delta}_i^n Y^{\tr} }{\DenH} \bigg) \mathrel{\Big\vert} \calf^n_{i - \theta_n} \bigg], 
	\end{equation}
	where 
	\begin{equation*}
		\begin{split}
			& V^n(t) = \sum_{i=\theta_n + 1}^{[t/\Den]-L+1} \Xi^n_i, \qquad 
			\Xi^n_i = \DenOneHalf \bigg( f \bigg( \frac{ \xi^n_i }{\DenH} \bigg) - 
			\bbe \bigg[f \bigg( \frac{ \xi^n_i }{\DenH} \bigg) \mathrel{\Big\vert}  \calf^n_{i - \theta_n} \bigg] \bigg), \\
			& U^n(t) =  \DenOneHalf \sum_{i=\theta_n + 1}^{[t/\Den]-L+1} \Bigg\{ 
			f \bigg( \frac{\un{\Delta}_i^n Y^{\tr}}{\DenH} \bigg) - f \bigg( \frac{ \xi^n_i }{\DenH} \bigg) - 
			\bbe \bigg[f \bigg( \frac{\un{\Delta}_i^n Y^{\tr}}{\DenH} \bigg) - f \bigg( \frac{ \xi^n_i }{\DenH} \bigg) \mathrel{\Big\vert}  \calf^n_{i - \theta_n} \bigg] \Bigg\}. 
		\end{split}
	\end{equation*}
	\begin{Lemma} \label{approx:lem:1}
		For all $H < \frac12$, we have that $U^n\limL0$.
	\end{Lemma}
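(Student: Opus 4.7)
}
The plan is to Taylor-expand the summands of $U^n(t)$ around $\xi_i^n/\Delta_n^H$ in powers of the semimartingale increment $(\un{\Delta}_i^n A + \un{\Delta}_i^n M)/\Delta_n^H$, which by \eqref{1st:tr} is exactly the difference $\un{\Delta}_i^n Y^{\tr}/\Delta_n^H - \xi_i^n/\Delta_n^H$. Since $f$ is smooth with polynomially bounded derivatives (Assumption~\ref{Ass:A'}) and $\un{\Delta}_i^n A$, $\un{\Delta}_i^n M$ have sizes $\Delta_n$ and $\Delta_n^{1/2}$ respectively, the $k$-th Taylor term $T_i^k$ has $L^p$-size $\Delta_n^{k(1/2-H)}$, while a remainder $R_i^{K+1}$ of order $(K+1)(1/2-H)$ can be absorbed into $U^n$ as soon as $K$ is chosen with $(K+1)(1/2-H)>1/2$. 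Thus the task is to show that, for each fixed $k\in\{1,\dots,K\}$, the centered sum $\Delta_n^{1/2}\sum_i\{T_i^k-\E[T_i^k\mid\calf^n_{i-\theta_n}]\}$ vanishes in $L^1$ uniformly on $[0,T]$.

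A naive application of the martingale size estimate \eqref{MSE} with $\theta'''_n=\theta_n$ only yields a bound $\theta_n^{1/2}\Delta_n^{k(1/2-H)}$, which is insufficient when $H$ is close to $\frac12$. To gain, I would first \emph{freeze} the volatilities at the left endpoint, replacing $\rho_s$ by $\rho_{(i-\theta_n)\Delta_n}$ in $\xi_i^n$ and $\si_s$ by $\si_{(i-\theta_n)\Delta_n}$ in $\un{\Delta}_i^n M$. The Hölder estimates \eqref{eq:Hoelder} and the standard size estimates of Appendix~\ref{SectA} show that the freezing error contributes negligibly. After freezing, $\wt\xi_i^n$ depends only on $W$-increments over $((i-\theta_n)\Delta_n,(i+L-1)\Delta_n)$ while the frozen increment $\wt M_i=\si_{(i-\theta_n)\Delta_n}\un{\Delta}_i^n B$ depends only on $B$-increments, and the two are conditionally independent given $\calf^n_{i-\theta_n}$ because $B\perp W$.

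Exploiting this product structure, I would split each frozen Taylor term into (a) the part obtained by replacing $(\wt M_i)^{\otimes k}/\Delta_n^{kH}$ by its conditional expectation given $\calf^n_{i-1}\vee\si(W)$ (using the Wick/Isserlis formula for Brownian moments), and (b) the martingale-difference residual. Residual (b) has conditional mean zero with respect to the \emph{fine} filtration $\calg_i^n=\calf^n_{i-1}\vee\si(W)$, and its summands are only $L$-dependent; the martingale size estimate \eqref{MSE} applied with $\theta'''_n=L$ then gives a bound of order $L^{1/2}\Delta_n^{k(1/2-H)}\to 0$ for all $k\ge 1$ and all $H<\frac12$. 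Piece (a), which only appears for even $k$, is a product of a deterministic Brownian moment of order $\Delta_n^{k/2-kH}$ and a $W$-measurable functional of $\wt\xi_i^n$; subtracting its coarse-filtration conditional expectation $\E[\,\cdot\mid\calf^n_{i-\theta_n}]$ produces a doubly centered expression whose size gains an additional Hölder factor from the regularity of $\rho$ and $\si$, so that the coarser martingale size estimate closes the bound.

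The main obstacle is bookkeeping piece (a): the freezing introduces an a~priori non-negligible bias because $(\wt M_i)^{\otimes 2}$ has non-zero conditional mean $\si_{(i-\theta_n)\Delta_n}^{\otimes 2}\Delta_n^{1-2H}$, and one has to verify that this bias is already absorbed by the original centering $\E[\,\cdot\mid\calf^n_{i-\theta_n}]$ in the definition of $\varpi_i^n$, up to errors governed by the modulus of continuity of $\rho^2\si^2$ and by the freezing window $\theta_n\Delta_n$. Once this cancellation is tracked carefully for every $k\in\{1,\dots,K\}$---and combined with the remainder bound from Step~1 and the choice of $\theta$ from \eqref{theta:value}, which is made precisely to balance the two martingale estimates---the full statement $U^n\limL 0$ follows without any restriction beyond $H<\frac12$.
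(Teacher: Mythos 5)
Your overall strategy is genuinely close in spirit to the paper's: the paper also freezes volatilities, exploits $B\perp W$, and separates a fine-filtration martingale piece from a ``drift''-like remainder---only it does so via iterated applications of It\^o's formula rather than a Taylor expansion plus Wick decomposition, which is essentially a notational variant. Your treatment of the residual piece (b) by the martingale size estimate \eqref{MSE} with step $L$, giving $O(\sqrt{L}\,\Delta_n^{k(1/2-H)})$ per Taylor order, is sound and matches the paper's handling of the stochastic integrals.

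The gap is in piece (a), and it is more than a bookkeeping nuisance. You write that subtracting $\E[\,\cdot\mid\calf^n_{i-\theta_n}]$ ``produces a doubly centered expression whose size gains an additional H\"older factor from the regularity of $\rho$ and $\si$, so that the coarser martingale size estimate closes the bound.'' This is not correct. Up to constants, piece (a) is $\Delta_n^{k(1/2-H)}\,\si_{(i-\theta_n)\Delta_n}^{\otimes k}\,\partial^{\chi} f(\wt\xi^n_i/\Delta_n^{H})$. Since $\si_{(i-\theta_n)\Delta_n}$ is $\calf^n_{i-\theta_n}$-measurable, it factors out of the centering; the centered quantity $\partial^{\chi} f(\wt\xi^n_i/\Delta_n^{H})-\E[\partial^{\chi} f(\wt\xi^n_i/\Delta_n^{H})\mid\calf^n_{i-\theta_n}]$ has $L^2$-size of order $1$, with no H\"older gain whatsoever. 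The coarse martingale estimate with $\theta'''_n=\theta_n$ then gives $\sqrt{\theta_n}\,\Delta_n^{k(1/2-H)}$, and for the dominant case $k=2$ this vanishes only if $\theta<2(1-2H)$. That is incompatible with the constraint $\theta>1/(4(1-H))$ from \eqref{theta:value} whenever $H\geq (3-\sqrt{2})/4\approx 0.396$, so the two martingale estimates you invoke cannot be balanced by the choice of $\theta$ once $H$ is moderately close to $\frac12$. What actually makes piece (a) negligible is the polynomial autocorrelation decay of the Gaussian-subordinated sequence $\{\partial^{\chi} f(\wt\xi^n_i/\Delta_n^{H})\}_i$ (correlations $\lesssim \Gamma^H_r\sim r^{2H-2}$, which are summable): the centered sum $\Delta_n^{1/2}\sum_i\{\,\cdot\,\}$ is $O_{L^2}(1)$, not merely $O(\sqrt{\theta_n})$, so multiplying by $\Delta_n^{k(1/2-H)}$ gives $o_{L^1}(1)$. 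This is the same mechanism that underlies the fractional CLT \eqref{CLT:fBM}. The paper's proof leans on it implicitly (it is terse on the constant terms $\Delta_n^{j(1-2H)}\partial^{2j}f(\xi^{n,\di}_i/\Delta_n^H)$ produced by each It\^o iteration, applying the $\sqrt{\theta_n}$ estimate only to the iterated ``final drift,'' which after $N(H)+1$ steps carries the accumulated factor $\Delta_n^{(N(H)+1)(1-2H)}>\Delta_n^{1/2}$). Your plan would need to incorporate this covariance-decay argument---or an equivalent device---to close; the H\"older factor you posit does not exist.
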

	In other words, in the limit $\Del\to0$, the impact of the semimartingale component is negligible, except for its contributions to the conditional expectations in \eqref{dec:fBM}. As we mentioned above, this is somewhat surprising: It is true that the $L^2$-norm of the semimartingale increment $\un\Delta^n_i A+\un\Delta^n_i M$, divided by $\Del^H$, converges to $0$. But the rate $\Del^{1/2-H}$ at which this takes place can be arbitrarily slow if $H$ is close to $\frac12$. So Lemma~\ref{approx:lem:1} implies that there is a big gain in convergence rate if one considers the sum of the centered differences $f(\un\Delta^n_i Y^\tr/\Del^H)-f(\xi^n_i/\Del^H)$. In the proof, we will need for the first time that $f$ has at least $2(N(H)+1)$ continuous derivatives.
	
	The process $V^n$ only contains the fractional part and is responsible for the limit $\calz$ in \eqref{CLT:2}. For the sake of brevity, we borrow a result from \cite{Chong20}: For each $m \in \bbn$, consider the sums 
	\begin{equation*}
		\begin{split}
			V^{n,m,1}(t) & = \sum_{j=1}^{J^{n,m}(t)} V^{n,m}_j, \qquad  
			V^{n,m}_j = \sum_{k=1}^{m \theta_n} \Xi^n_{(j-1)((m+1)\theta_n + L - 1) + k}, \\
			V^{n,m,2}(t) & = \sum_{j=1}^{J^{n,m}(t)} \sum_{k=1}^{\theta_n + L - 1} \Xi^n_{(j-1)((m+1)\theta_n + L - 1) + m\theta_n + k}, \\
			V^{n,m,3}(t) & = \sum_{j = ((m+1)\theta_n + L - 1)J^{n,m}(t) + 1 }^{[t/\Den]-L+1} \Xi_j^n,
		\end{split}
	\end{equation*}
	where $J^{n,m}(t) = [ ([t/\Den]-L+1)/((m+1)\theta_n + L - 1) ]$. We then have $V^n(t) = \sum_{i=1}^{3} V^{n,m,i}(t)$. This is very similar to the decomposition on p.\ 1161 in \cite{Chong20}. With essentially the same proof, we  infer that
	$V^n(t) \stabKonv \calz$ and, hence, 
	\begin{equation} \label{CLT:fBM}
		\Delta_n^{\frac{1}{2}} \Bigg\{   \sum_{i = 1}^{[t/\Den]-L+1} f \bigg( \frac{\un{\Delta}_i^n Y}{\Delta_n^H} \bigg) 
		-   \sum_{i = \theta_n + 1}^{[t/\Den]-L+1} 
		\bbe \bigg[f \bigg( \frac{\un{\Delta}_i^n Y^{\tr} }{\DenH} \bigg) \mathrel{\Big|} \calf^n_{i - \theta_n} \bigg] \Bigg\} \stabKonv \calz,
	\end{equation}
	where $\calz$ is exactly as in \eqref{CLT:2}. Therefore, in order to complete the proof of Theorem~\ref{thm:CLT:mixedSM}, it remains to show that (recall $N(H) = [1/(2-4H)]$)
	\begin{equation*} 
		\begin{split}
			& \Delta_n^{-\frac{1}{2}} \Bigg \{ \Delta_n \sum_{i = \theta_n + 1}^{[t/\Den]-L+1} 
			\bbe \bigg[f \bigg( \frac{\un{\Delta}_i^n Y^{\tr} }{\DenH} \bigg) \mathrel{\Big|} \calf^n_{i - \theta_n} \bigg] 
			- \int_{0}^{t} \mu_f (\pi(s)) \, \dd s \\ & \qquad \qquad
			- \sum_{j=1}^{N(H)} \Den^{j (1 - 2H)}  
			\sum_{\vert \chi \vert = j} \frac{1}{\chi!} \int_{0}^{t} \pd^{\chi} \mu_f (\pi(s) ) {c(s)}^{\chi} \, \dd s  
			\Bigg \} \LeinsKonv 0.
		\end{split}
	\end{equation*}
	To this end, we will discretize the volatility processes $\si$ and $\rho$ in $\un{\Delta}_i^n Y^{\tr}$. The proof is technical (as it involves another multiscale analysis) and will be divided into further smaller steps in Appendix~\ref{sect:details}. 
	\blem\label{lem:approx:6} Assuming \eqref{theta:value}, we have that 
	\begin{equation*}
		\DenOneHalf \sum_{i=\theta_n + 1}^{[t/\Den]-L+1} \bigg\{ \bbe \bigg[f  \bigg(\frac{\un{\Delta}_i^n Y^{\tr}}{\DenH } \bigg) \mathrel{\Big|} \calf^n_{i - \theta_n} \bigg] 
		- \mu_f ( \Upsilon^{n,i} ) \bigg\} \LeinsKonv 0,
	\end{equation*}		
	where $\Upsilon^{n,i} \in (\bbr^{d \times L})^2$ is defined by
	\begin{equation} \label{cov:v1}
		\begin{split}
			(\Upsilon^{n,i})_{k\ell,k'\ell'} & =
			c( (i-1)\Den )_{k\ell,k'\ell'} \,\Delta_n^{1-2H} \\ & \quad +
			( \rho_{(i-1)\Den} \rho_{(i-1)\Den}^T  )_{k k'}
			\int_{(i - \theta_n)\Delta_n}^{(i + L - 1) \Delta_n} \frac{\Delta^n_{i + \ell- 1} g(s) \Delta^n_{i + \ell' - 1} g(s)}{\Den^{2H}}  \,  \dd s.
		\end{split}
	\end{equation}	
	\elem

	The last part of the proof consists of evaluating $$\DenOneHalf \sum_{i=\theta_n + 1}^{[t/\Den]-L+1} \mu_f ( \Upsilon^{n,i} ).$$ This is the place where the asymptotic bias terms arise and which is different from the pure (semimartingale or fractional) cases. Roughly speaking, the additional terms are due to the fact that in the LLN limit \eqref{eq:Vft}, there is a contribution of magnitude $\Den^{1-2H}c(s)$ coming from the semimartingale part that is negligible on first order but not at a rate of $\sqrt{\Del}$. Expanding $\mu_f ( \Upsilon^{n,i} )$ in a Taylor sum   up to order $N(H)$,  we obtain
	\begin{align*} \mu_f ( \Upsilon^{n,i} )&=\mu_f ( \pi( (i-1)\Den ) ) + \sum_{j=1}^{N(H)}    
		\sum_{\vert \chi \vert = j}\frac{1}{\chi!}  \pd^{\chi} \mu_f  ( \pi( (i-1)\Den )  ) 
		(\Upsilon^{n,i} - \pi( (i-1)\Den ) )^{\chi}  \\
		&\quad+\sum_{\vert \chi \vert = N(H) + 1} 	\frac{1}{\chi!}\pd^{\chi} \mu_f( \upsilon^n_i ) (\Upsilon^{n,i} - \pi( (i-1)\Den ) )^{\chi}, \end{align*}
	where  $\upsilon^n_i$ is a point between $\Upsilon^{n,i}$ and $\pi( (i-1)\Den )$. The next lemma shows two things: first, the term of order $N(H)+1$ is negligible, and second, for $j=1,\dots,N(H)$, we may replace $\Upsilon^{n,i} - \pi( (i-1)\Den )$ by $\Den^{1-2H}c((i-1)\Del)$.
	\blem\label{lem:Taylor}
	We have that $\mathbb{X}_1^{n}\limL0$ and $\mathbb{X}_2^{n}\limL0$, where
	\begin{equation} \label{CLT:ZGW:2}
		\begin{split}
			\mathbb{X}_1^{n} (t)& =\mathtoolsset{multlined-width=0.8\displaywidth} \begin{multlined}[t]
				\DenOneHalf \sum_{i=\theta_n + 1}^{[t/\Delta_n-L+1}
				\sum_{j = 1}^{N(H)} 
				\sum_{\vert \chi \vert = j} \frac{1}{\chi!} \pd^{\chi} \mu_f(\pi( (i-1)\Den )) \\   \times 
				\Big\{ (\Upsilon^{n,i} - \pi( (i-1)\Den ) )^{\chi} 
				- \Den^{j(1-2H)} c( (i-1)\Den )^{\chi} \Big\},\end{multlined} \\
			\mathbb{X}_2^{n} (t)& = \DenOneHalf \sum_{i=\theta_n + 1}^{[t/\Delta_n]-L+1}
			\sum_{\vert \chi \vert = N(H) + 1} 	\frac{1}{\chi!}\pd^{\chi} \mu_f( \upsilon^n_i ) (\Upsilon^{n,i} - \pi( (i-1)\Den ) )^{\chi}. 
		\end{split}
	\end{equation}
	\elem
	
	In a final step, we remove the discretization of $\si$ and $\rho$.
	\begin{Lemma} \label{lem:app:10} If $\theta$ is chosen according to \eqref{theta:value}, then		
		\begin{equation} \label{app:ZGW}
			\Del^{-\frac12}\Bigg\{ \Del\sum_{i=\la_n + 1}^{[t/\Delta_n]-L+1} \mu_f ( \pi( (i-1)\Den )) -  \int_{0}^{t} \mu_f (\pi( s )) \, \dd s\Bigg\} \LeinsKonv 0
		\end{equation}
		and
		\begin{equation} \label{app:ZGW:2}
			\begin{split}
				& \DenMinOneHalf\Bigg\{	\Delta_n\sum_{i=\theta_n + 1}^{[t/\Delta_n]-L+1}
				\sum_{j=1}^{N(H)}  \Den^{j(1-2H)}
				\sum_{\vert \chi \vert = j}\frac{1}{\chi!} \pd^{\chi} \mu_f  ( \pi( (i-1)\Den )  ) 
				c( (i-1)\Den )^{\chi}  \\ & \qquad
				- \int_{0}^{t}  \sum_{j=1}^{N(H)}  
				\sum_{\vert \chi \vert = j}\frac{1}{\chi!} \pd^{\chi} \mu_f  (\pi( s ) )  
				\Den^{j(1-2H)} c(s)^{\chi} 
				\, \dd s\Bigg\} \LeinsKonv 0.
			\end{split}
		\end{equation}
	\end{Lemma}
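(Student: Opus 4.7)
The plan is to treat both \eqref{app:ZGW} and \eqref{app:ZGW:2} as rescaled Riemann-sum approximation errors and to control them in the $L^1$-norm (with $\sup_{t\leq T}$ taken inside) that underlies $\limL$, using the regularity of $\rho$ from \eqref{repr:si}--\eqref{reg:ass:si:ti} and of $\si$ from \eqref{reg:cond:rho:B3}. Under Assumption~\ref{Ass:A'}, both $\mu_f\circ\pi$ and the coefficient $\pd^\chi\mu_f(\pi)\,c^\chi$ are uniformly bounded in $L^1$, so the boundary effects (the $\theta_n$ smallest indices missing from the sum and the first interval $[0,\theta_n\Del]$ missing from the integral, plus an analogous mismatch at the upper end of order $L\Del$) contribute at most $O(\theta_n\Del+L\Del)=O(\Del^{1-\theta})$, which becomes $O(\Del^{1/2-\theta})\to 0$ after multiplication by $\Del^{-1/2}$, since $\theta<\frac12$ by \eqref{theta:value}. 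It therefore suffices to bound in the same norm the bulk error $\Del^{-1/2}\sum_{i=\theta_n+1}^{[t/\Del]-L+1}\int_{(i-1)\Del}^{i\Del}[\Phi_s-\Phi_{(i-1)\Del}]\,\dd s$, where $\Phi$ denotes either $\mu_f(\pi(\cdot))$ (for \eqref{app:ZGW}) or $\pd^\chi\mu_f(\pi(\cdot))\,c(\cdot)^\chi$ (for \eqref{app:ZGW:2}).

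For \eqref{app:ZGW} I would split $\rho_s-\rho_{(i-1)\Del}$ according to \eqref{repr:si} and correspondingly write $\mu_f(\pi_s)-\mu_f(\pi_{(i-1)\Del})=[\mu_f(\pi_s)-\mu_f(\pi_s^{\star,i})]+[\mu_f(\pi_s^{\star,i})-\mu_f(\pi_{(i-1)\Del})]$, where $\pi_s^{\star,i}$ is obtained from $\pi_s$ by replacing $\rho_s^{(0)}$ with $\rho_{(i-1)\Del}^{(0)}$. On the first bracket, the mean value theorem (together with polynomial growth of $\nabla\mu_f$ and boundedness of $\rho$) and \eqref{mom:ass:si:rho:0} give an $L^1$-bound of $O(\Del^{1+\ga})$ per $i$, so the total contribution is $O(\Del^\ga)$, which is $o(\sqrt{\Del})$ because $\ga>\frac12$. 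On the second bracket, $s\mapsto\pi_s^{\star,i}$ is a bounded It\^o semimartingale driven by $\wt W$, so It\^o's formula yields $\mu_f(\pi_s^{\star,i})-\mu_f(\pi_{(i-1)\Del})=\int_{(i-1)\Del}^s b^{(i)}_u\,\dd u+\int_{(i-1)\Del}^s \tau^{(i)}_u\,\dd\wt W_u$ with uniformly bounded $b^{(i)}$ and $\tau^{(i)}$. A (stochastic) Fubini converts the outer integral $\int_{(i-1)\Del}^{i\Del}(\cdot)\,\dd s$ into $\int_{(i-1)\Del}^{i\Del}(i\Del-u)b^{(i)}_u\,\dd u+\int_{(i-1)\Del}^{i\Del}(i\Del-u)\tau^{(i)}_u\,\dd\wt W_u$. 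The drift sum over $i$ is $O(\Del)$ in $L^1$, while the martingale sum is a sum of orthogonal increments with $L^2$-size $(\sum_i\Del^3)^{1/2}=O(\Del)$; Doob's inequality transfers this bound to $\sup_{t\leq T}$. Both contributions become $o(\sqrt{\Del})$ after the $\Del^{-1/2}$ rescaling.

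For \eqref{app:ZGW:2} I decompose $\pd^\chi\mu_f(\pi_s)c_s^\chi-\pd^\chi\mu_f(\pi_{(i-1)\Del})c_{(i-1)\Del}^\chi=[\pd^\chi\mu_f(\pi_s)-\pd^\chi\mu_f(\pi_{(i-1)\Del})]c_s^\chi+\pd^\chi\mu_f(\pi_{(i-1)\Del})[c_s^\chi-c_{(i-1)\Del}^\chi]$. The first summand is handled exactly as in \eqref{app:ZGW} with $\pd^\chi\mu_f$ in place of $\mu_f$, and the extra outer factor $\Del^{j(1-2H)}$ with $j\geq 1$ only improves the resulting estimate. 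For the second summand, the mean value theorem combined with \eqref{reg:cond:rho:B3} gives an $L^1$-bound of $O(\Del^{3/2})$ per $i$, and hence $O(\sqrt{\Del})$ in total; multiplying by $\Del^{-1/2}\Del^{j(1-2H)}$ yields $O(\Del^{j(1-2H)})\to0$ because $j\geq 1$ and $H<\frac12$. The delicate point is that in \eqref{app:ZGW} the argument would fail without the It\^o-type cancellation on the semimartingale part of $\rho$ (pure Hölder bounds give $O(\sqrt{\Del})$, which matches the required rate with no room to spare); by contrast, in \eqref{app:ZGW:2} the extra power $\Del^{j(1-2H)}$ arising from the mixed structure of the efficient price and the rough noise provides precisely the slack needed to absorb the cruder Hölder-$\frac12$ regularity of $\si$, which is not assumed to be a semimartingale.
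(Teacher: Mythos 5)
Your argument is correct and matches the paper's: for \eqref{app:ZGW:2} the paper likewise applies the mean-value theorem jointly in $(\pi,c)$, uses the $\tfrac12$-H\"older $L^p$-moduli of both (available under Assumption~\ref{Ass:A'}), and lets the factor $\Delta_n^{j(1-2H)}$ absorb the slack---precisely the observation in your closing sentence. For \eqref{app:ZGW} the paper omits the proof with a reference to Equation~(5.3.24) of Jacod and Protter, and the It\^o-plus-stochastic-Fubini decomposition of the semimartingale part of $\rho$, together with the $\gamma>\tfrac12$ H\"older modulus of $\rho^{(0)}$ from \eqref{mom:ass:si:rho:0} and the martingale orthogonality across blocks, is exactly the mechanism that reference uses, so you have supplied what the paper delegates rather than deviated from it.
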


	By the properties of stable convergence in law (see Equation~(2.2.5) in \cite{JP}), the CLT in \eqref{CLT:2} follows by combining Lemmas~\ref{lem:app:2}--\ref{lem:app:10}.
\end{proof} 

\subsection{Details of the proof of Theorem~\ref{thm:CLT:mixedSM}}\label{sect:details}

Assumption~\ref{Ass:A'} is in force throughout this section.

\begin{proof}[Proof of Lemma~\ref{lem:app:2}]
	By the calculations in \eqref{SSE:A'}--\eqref{SSE:fBM}, we have
	$\bbe [ \Vert \uDeni Y/\Den^H \Vert^p ]^{{1}/{p}} \lesssim 1$ for all $p\geq1$. As $f$ grows at most polynomially, we see that  
	$\bbe [ \vert f ( \un{\Delta}_i^n Y/\Den^H  ) \vert ]$ is of size $1$. Hence, 
	$\bbe[   \vert \Delta_n^{1/2} \sum_{i = 1}^{\theta_n} f ( \un{\Delta}_i^n Y/\Den^H ) \vert ] 
	\lesssim \Den^{1/2 - \theta}$, which implies
	$
	\Delta_n^{ {1}/{2}} \sum_{i = 1}^{\theta_n} f  (  {\un{\Delta}_i^n Y}/{\Delta_n^H}  ) \to 0
	$ in $L^1$
	since $\theta < \frac12$ by \eqref{theta:value}. As a result, omitting the first $\theta_n$ terms in the definition of $V^n_f(Y,t)$ does no harm asymptotically.
	Next, we define
	\begin{equation} \label{1st:tr:vec}
		\begin{split}
			\La_i^{n} &= f \bigg( \frac{\un{\Delta}_i^n Y}{\DenH} \bigg) - 
			f \bigg( \frac{\un{\Delta}_i^n Y^{\mathrm{tr}}}{\DenH} \bigg), \qquad
			\ov	\La_i^{n} = \La_i^{n} - \bbe [\La_i^{n} \mid \calf^n_{i - \theta_n}].
		\end{split}
	\end{equation}
	By our choice \eqref{theta:value} of $\theta$ and since $H < \frac12$, the lemma is proved once 
	\begin{align}\label{eq:toprove}
		\bbe \bigg[\sup_{t \leq T} \bigg \vert \DenOneHalf \sum_{i = \theta_n + 1}^{[t/\Delta_n]-L+1} \ov\La^{n}_i \bigg \vert \bigg] 
		&\lesssim \Den^{\theta  (\frac{1}{2}-H )},\\
		\bbe \bigg[\sup_{t \leq T} \bigg \vert \DenOneHalf \sum_{i = \theta_n + 1}^{[t/\Delta_n]-L+1} \bbe [ \La^{n}_i \vert \calf^n_{i - \theta_n}] \bigg \vert \bigg] &\lesssim
		\Den^{\theta  (\frac{1}{2}-H )} + \Den^{ 2\theta(1-H) - \frac{1}{2} }
		\label{eq:toprove2}
	\end{align}
	are established. To this end, let
	$\lambda^n_i =  {\un{\Delta}_i^n Y - \un{\Delta}_i^n Y^{\tr}}/{\DenH}  =
	\int_{0}^{(i - \theta_n)\Den}  \rho_s \, \dd W_s \, \frac{\uDeni g(s)}{ \DenH}$.
	By Assumption \ref{Ass:A}, we have $\lvert f(z) - f(z') \rvert \lesssim (1 + \Vert z \Vert^{p-1} + \Vert z' \Vert^{p-1} ) \Vert z - z' \Vert$.  In addition, 
	$\bbe [ \vert f ( \un{\Delta}_i^n Y/\Den^H  ) \vert ]$ is of size 1, so
	$
	\bbe [ (\ov\La^{n}_i)^2 ] \lesssim \bbe [ (\La^n_i)^2 ]  
	\lesssim \bbe [ \Vert \lambda^n_i \Vert^{2}]
	\lesssim \Den^{2 \theta (1-H)},
	$
	where we used \eqref{SSE:fBM} for the last estimation.
	By construction, $\ov\La^{n}_i$ is $\calf^n_{i + L -1}$-measurable and has conditional expectation $0$ given $\calf^n_{i - \theta_n}$. Therefore, we can further use an estimate of the kind \eqref{MSE} to show that the left-hand side of \eqref{eq:toprove} is bounded, up to constant, by $ \sqrt{\theta_n} \Den^{\theta (1-H)} \lesssim
	\Den^{\theta (1-H) - \theta/2} = 
	\Den^{\theta  ({1}/{2}-H )}$.

	Next, let
	$
	\psi^n_i  = \si_{(i- \theta_n) \Den} \uDeni B + \int_{(i - \theta_n) \Den}^{(i + L - 1) \Den} \rho_{(i - \theta_n) \Den} \, \dd W_s \, \uDeni g(s)
	$.
	Since $f$ is smooth, applying Taylor's theorem twice yields $\La^{n}_i =	\La^{n,1}_i + \La^{n,2}_i + \La^{n,3}_i$, where
	\begin{align*}
		\La^{n,1}_i & 
		=
		\sum_{\lvert \chi \rvert = 1} \pd^{\chi} f \biggl( \frac{\psi^n_i}{\DenH} \biggr) (\lambda^n_i)^{\chi},\qquad  \La^{n,2}_i =
		\sum_{\lvert \chi \rvert ,\lvert \chi' \rvert = 1} \pd^{\chi + \chi'} f (\wt\eta_i^{n}) 
		\biggl( \frac{ \uDeni Y^{\tr} - \psi^n_i }{\DenH} \biggr)^{\chi'} (\lambda^n_i)^{\chi}, \\
		\La^{n,3}_i&= 	
		\sum_{\lvert \chi \rvert = 2} \frac{\pd^{\chi} ( \eta_i^{n} )}{\chi!}    (\lambda^n_i)^{\chi} 
	\end{align*}
	and $\chi, \chi' \in	\bbn_0^{d \times L}$ are multi-indices and $\eta_i^{n}$ (resp., $\wt\eta_i^{n}$) is a point on the line between 
	$\uDeni Y / \DenH $ and $\uDeni Y^{\tr} /\DenH $ (resp., $\uDeni Y^{\tr} / \DenH $ and $\psi^n_i / \DenH $). Accordingly, we split
	\begin{equation*}
		\DenOneHalf \sum_{i=\theta_n + 1}^{[t/\Delta_n]-L+1} \mathbb{E} [ \La^{n}_i \mid \mathcal{F}^n_{i - \theta_n} ] 
		= \sum_{j=1}^3 \bbl_j^n(t), \qquad
		\bbl_j^n(t) = \DenOneHalf \sum_{i=\theta_n + 1}^{[t/\Delta_n]-L+1} \mathbb{E} [ \La^{n,j}_i \mid \mathcal{F}^n_{i - \theta_n} ].
	\end{equation*}	
	Note that
	$
	\mathbb{E}  [ \pd^{\chi} f  ( \frac{\psi^n_i}{\DenH}  ) (\lambda^n_i)^{\chi} \mid \mathcal{F}^n_{i - \theta_n}  ]  =
	(\lambda^n_i)^{\chi}\, \mathbb{E}  [  \pd^{\chi} f  ( \frac{\psi^n_i}{\DenH}  ) \mid \mathcal{F}^n_{i - \theta_n}  ] = 0
	$
	because $\lambda^n_i$ is $\calf^n_{i - \theta_n}$-measurable, $\psi^n_i$ is centered normal given $\mathcal{F}^n_{i - \theta_n}$ and $f$ has odd partial derivatives of first orders (since $f$ is even). It follows that $\bbl_1^n(t) = 0$ identically.
	Writing $$\bone^n_i(s)=(\bone_{((i-1)\Del,i\Del)}(s),\dots,\bone_{((i+L-2)\Del,(i+L-1)\Del)}(s)),$$ we can   decompose $	\uDeni Y^{\tr} - \psi^n_i$ as
	\begin{equation*}
		\uDeni A + \int_{0}^{t} (\si_s - \si_{(i- \theta_n) \Den})\, \dd B_s\, \bone^n_i(s)
		+ \int_{(i - \theta_n) \Den}^{(i + L - 1) \Den} (\rho_s - \rho_{(i - \theta_n) \Den}) \, \dd W_s \, \uDeni g(s).
	\end{equation*}
	By a standard size estimate, it follows that
	\begin{equation*}
		\begin{split}
			\bbe \bigg[ \sup_{t \leq T} \big \vert \bbl^n_2(t) \big \vert \bigg] & \lesssim (\DenOneHalf \Den^{-1}) 
			( \Delta_n^{1-H} + \theta_n^{\frac{1}{2}} \Den^{1-H} + (\theta_n \Den)^{\frac{1}{2}}  )\Den^{\theta(1-H)}  \\ &
			\lesssim \Den^{-\frac{1}{2}} \Den^{\theta(1-H)} (\theta_n \Den)^{\frac{1}{2}}
			= \Den^{\theta  (\frac{1}{2} - H ) }, \\
			\bbe \bigg[ \sup_{t \leq T} \big \vert \bbl^n_3(t) \big \vert \bigg] & \lesssim 
			\DenMinOneHalf  (\Den^{ \theta(1-H) }  )^2 = 
			\Den^{ 2\theta(1-H) - \frac{1}{2} },
		\end{split}
	\end{equation*}
	proving  \eqref{eq:toprove2} and thus the lemma.
\end{proof}

\begin{proof}[Proof of Lemma \ref{approx:lem:1}]	
	Let 
	$ \xi^{n,\di}_i=\int_{(i-\theta_n)\Del}^{(i+L-1)\Del} \rho_{(i-\theta_n)\Del} \,\dd W_s\,\un\Delta^n_i g(s)$ and recall the definition of $\xi^n_i$ from  \eqref{1st:tr}.
	In a first step, we   show that $U^n$ can be approximated by 
	\begin{multline*} \ov U^n(t)= \DenOneHalf \sum_{i=\theta_n + 1}^{[t/\Den]-L+1} \Bigg\{ 
		f \bigg( \frac{\si_{(i-1)\Del} \un\Delta^n_i B + \xi^{n,\di}_i}{\DenH} \bigg) - f \bigg( \frac{ \xi^{n,\di}_i }{\DenH} \bigg) \\
		- 	\bbe \bigg[f \bigg( \frac{\si_{(i-1)\Del} \un\Delta^n_i B + \xi^{n,\di}_i}{\DenH} \bigg) - f \bigg( \frac{ \xi^{n,\di}_i }{\DenH} \bigg) \mathrel{\Big\vert}  \calf^n_{i - \theta_n} \bigg] \Bigg\}. \end{multline*}
	By \eqref{eq:Hoelder} and a size estimate as in \eqref{SSE:fBM}, the difference $\xi^n_i-\xi^{n,\di}_i$ is of size $(\theta_n\Del)^{1/2}$. Together with \eqref{SSE:A'} and \eqref{SSE:BM}, we further have that $\un\Delta^n_i Y^\tr- \si_{(i-1)\Del} \un\Delta^n_i B - \xi^{n,\di}_i$ is of size $\Del+\sqrt{\Del}+(\theta_n\Del)^{1/2}$. By the mean-value theorem, these size bounds imply that  
	$$ \E\bigg[\bigg\lvert	f \bigg( \frac{\uDeni Y^{\tr}}{\DenH} \bigg)- f \bigg( \frac{ \si_{(i-1)\Del} \un\Delta^n_i B + \xi^{n,\di}_i }{\DenH} \bigg) \bigg\rvert^p 
	+ \bigg\lvert f \bigg( \frac{\xi^n_i}{\DenH} \bigg) - f \bigg( \frac{ \xi^{n,\di}_i }{\DenH} \bigg)  \bigg\rvert^p\bigg]^{\frac1p}\lec(\theta_n\Del)^{\frac12} $$		
	for any $p>0$. 
	Moreover, the $i$th term in the definition of $\ov U^n(t)$  is $\calf^n_{i+L-1}$-measurable with zero mean conditionally on $\calf^n_{i-\theta_n}$. Therefore, employing a martingale size estimate as in \eqref{MSE}, we obtain
	$ \E [\sup_{t\leq T} \lvert U^n(t)-\ov U^n(t)\rvert ] \lec \sqrt{\theta_n}(\theta_n\Del)^{1/2}\leq \Del^{1/2-\theta},  $
	which converges to $0$ by \eqref{theta:value}.
	
	Next, because $B$ and $W$ are independent, we can apply It\^o's formula with $\xi^{n,\di}_i$ as starting point and write 
	\beq\label{Ito1}\begin{split}
		&f \bigg( \frac{\si_{(i-1)\Del} \un\Delta^n_i B + \xi^{n,\di}_i}{\DenH} \bigg)- f \bigg( \frac{ \xi^{n,\di}_i }{\DenH} \bigg) \\
		&\qquad= \Del^{-H}\sum_{j,k=1}^d\sum_{\ell=1}^L \int_{(i+\ell-2)\Del}^{(i+\ell-1)\Del} \frac{\pd}{\pd z_{k\ell}} f\bigg(\frac{\un\Delta Y^{n,\di}_i(s)}{\Del^H}\bigg)\si^{kj}_{(i-1)\Del}\,\dd B^j_s   \\
		&\qquad\quad+\frac12\Del^{-2H}\sum_{k,k'=1}^d\sum_{\ell=1}^L \int_{(i+\ell-2)\Del}^{(i+\ell-1)\Del} \frac{\pd^2}{\pd z_{k\ell}\pd z_{k'\ell}} f\bigg(\frac{\un\Delta Y^{n,\di}_i(s)}{\Del^H}\bigg)(\si\si^T)^{kk'}_{(i-1)\Del}\,\dd s,
	\end{split}\eeq
	where $\un\Delta Y^{n,\di}_i(s)=\int_{(i-1)\Del}^s \si_{(i-1)\Del}\,\dd B_r\,\bone^n_i(r)+\xi^{n,\di}_i$.
	Clearly, the stochastic integral is $\calf^n_{i+L-1}$-measurable and   conditionally centered given  $\calf^n_{i-1}$. Therefore, by a martingale size estimate, its contribution to $\ov U^n(t)$ is of  magnitude $\Del^{1/2-H}$, which is negligible because $H<\frac12$. For the Lebesgue integral, we   apply It\^o's formula again and write 
	\begin{align*}
		&\frac{\pd^2}{\pd z_{k\ell}\pd z_{k'\ell}} f\bigg(\frac{\un\Delta Y^{n,\di}_i(s)}{\Del^H}\bigg)=	\frac{\pd^2}{\pd z_{k\ell}\pd z_{k'\ell}}f\bigg(\frac{\xi^{n,\di}_i}{\Del^H}\bigg)\\
		&\quad +\Del^{-H}\sum_{j_2,k_2=1}^d\sum_{\ell_2=1}^L \int_{(i+\ell_2-2)\Del}^{s\wedge(i+\ell_2-1)\Del} \frac{\pd^3}{\pd z_{k\ell}\pd z_{k'\ell}\pd z_{k_2\ell_2}} f\bigg(\frac{\un\Delta Y^{n,\di}_i(r)}{\Del^H}\bigg)\si^{k_2j_2}_{(i-1)\Del}\,\dd B^{j_2}_r\\
		&\quad+\frac{\Del^{-2H}}{2}\sum_{k_2,k'_2=1}^d\sum_{\ell_2=1}^L \int_{(i+\ell_2-2)\Del}^{s\wedge(i+\ell_2-1)\Del} \frac{\pd^4}{\pd z_{k\ell}\pd z_{k\ell}\pd z_{k_2\ell_2}\pd z_{k'_2\ell_2}} f\biggl(\frac{\un\Delta Y^{n,\di}_i(r)}{\Del^H}\biggr)(\si\si^T)^{k_2k'_2}_{(i-1)\Del}\,\dd r.
	\end{align*} 
	By the same reason as before, the stochastic integral (even after we plug it into the drift in \eqref{Ito1}) is $\calf^n_{i+L-1}$-measurable with zero $\calf^n_{i-1}$-conditional mean and therefore negligible. The Lebesgue integral is essentially of the same form as the one in \eqref{Ito1}. Because $f$ is smooth, we can repeat this procedure as often as we want. What is important, is that we gain a net factor of $\Del^{1-2H}$ in each step (we have $\Del^{-2H}$ times a Lebesgue integral over an interval of length at most $\Del$). After $N$ applications of It\^o's formula, the final drift term yields a contribution of size
	$\sqrt{\theta_n}\Del^{N(1-2H)}$
	to $\ov U^n(t)$. As $\theta<\frac12$, it suffices to take $N=N(H)+1$ to make this convergent to $0$.
\end{proof}

\begin{proof}[Proof of Lemma \ref{lem:approx:6}]
	We begin by discretizing $\rho$ on a finer scale and let
	\begin{equation} \label{dis:CLT} 
		\Theta_i^n  = \int_{(i - \theta_n)\Delta_n}^{(i + L - 1) \Delta_n} 
		\sum_{k=1}^{Q} \rho_{(i- \theta_n^{(q-1)}) \Den } 
		\ind_{ ((i-\theta_n^{(q-1)})\Den, (i-\theta_n^{(q)})\Den) }(s)   \, \dd W_s \, \uDeni g(s),
	\end{equation}
	where $\theta_n^{(q)} = [\Delta_n^{-\theta^{(q)}}]$ for $q=0, \ldots, Q-1$, $\theta_n^{(Q)} = - (L - 1)$ and the
	numbers $\theta^{(q)}$, $q=0, \ldots, Q-1$ for some $Q \in \bbn$, are chosen such that $\theta = \theta^{(0)}> \dots > \theta^{(Q-1)} > \theta^{(Q)} = 0$ and 
	\begin{equation} \label{it:CLT}
		\theta^{(q)} > \frac{\ga}{1 - H} \theta^{(q - 1)} - \frac{\ga - \frac12}{1-H}, \qquad q=1, \ldots, Q,
	\end{equation}
	where $\ga$ describes the regularity of the volatility process $\rho^{(0)}$ in \eqref{repr:si-2}. Because $H<\frac12$ and we can make $\ga$ arbitrarily close to $\frac12$ if we want, there is no loss of generality to assume that $\ga/(1-H)<1$. In this case, the fact that a choice as in \eqref{it:CLT} is possible can be verified by solving the associated linear recurrence equation.  Defining
	$
	\un{\Delta}^n_i Y^{\mathrm{dis}}  = \si_{(i - 1) \Den} \un{\Delta}^n_i B + \Theta_i^n,
	$
	we will show in Lemma~\ref{lem:help1} below that
	\begin{equation} \label{app:lem6:1}
		\DenOneHalf \sum_{i=\theta_n + 1}^{[t/\Den]-L+1} \bigg\{ \bbe \bigg[f  \bigg(\frac{\un{\Delta}_i^n Y^{\tr}}{\DenH } \bigg) \mathrel{\Big\vert} \calf^n_{i - \theta_n} \bigg] - 
		\bbe \bigg[f \bigg(\frac{\un{\Delta}^n_i Y^{\mathrm{dis}}}{\DenH } \bigg) \mathrel{\Big\vert} \calf^n_{i - \theta_n} \bigg] \bigg\} \LeinsKonv 0.
	\end{equation}

	Next, we define another  matrix $\Upsilon_i^{n,0} \in (\bbr^{d \times L})^2$ by
	\begin{equation} \label{cov:v0}
		\begin{split}
			(\Upsilon_i^{n,0})_{k\ell,k'\ell'} & = c((i-1)\Del) \Delta_n^{1-2H}  
			+ \sum_{q=1}^{Q} 
			\Big( \rho_{(i- \theta^{(q-1)}_n) \Den} \rho_{(i- \theta^{(q-1)}_n) \Den}^T \Big)_{k k'} \\ & \quad\times
			\int_{(i - \theta_n)\Delta_n}^{(i + L - 1) \Delta_n} \frac{\Delta^n_{i+\ell - 1} g(s) \Delta^n_{i+\ell' - 1} g(s)}{\Den^{2H}} 
			\ind_{ ((i-\theta_n^{(q-1)})\Den, (i-\theta_n^{(q)})\Den) }(s) \,  \dd s.
		\end{split}
	\end{equation}
	If $c$ and $\rho$ are deterministic, this is the covariance matrix of $\un{\Delta}^n_i Y^{\tr}/\Den^H$. Also notice that the only difference to $\Upsilon^n_i$ are the discretization points of $\rho$. Next, we show that 
	\begin{equation} \label{app:lem6:4}
		\DenOneHalf \sum_{i=\theta_n + 1}^{[t/\Den]-L+1} \bigg\{ 
		\bbe \bigg[ f \bigg(\frac{\un{\Delta}^n_i Y^{\di}}{\DenH } \bigg) \mathrel{\Big|} \calf^n_{i - \theta_n} \bigg]
		- \mu_f ( \bbe  [ \Upsilon_i^{n,0} \mid \calf^n_{i - \theta_n}  ]  ) \bigg\} \LeinsKonv 0,
	\end{equation}
	where $\mu_f$ is the mapping defined after Assumption~\ref{Ass:A}. This will be achieved through successive conditioning in Lemma~\ref{lem:help2}. Finally, as we  show in Lemma~\ref{lem:help3}, we have
	\begin{align} \label{app:lem6:2}
		\bbe \bigg[ \sup_{t \leq T} \bigg\vert
		\DenOneHalf \sum_{i=\theta_n + 1}^{[t/\Den]-L+1} \bigg\{ 
		\mu_f \bigg( \bbe [ \Upsilon_i^{n,0} \mid \calf^n_{i - \theta_n} ] \bigg)
		- \mu_f ( \Upsilon_i^{n,0} )
		\bigg\} \bigg\vert \bigg] &\ra 0,
		\\
		\label{app:lem6:3}
		\bbe \bigg[ \sup_{t \leq T} \bigg\vert \DenOneHalf \sum_{i=\theta_n + 1}^{[t/\Den]-L+1}  \{ \mu_f ( \Upsilon_i^{n,0} ) - \mu_f ( \Upsilon^{n,i} )
		\} \bigg\vert\bigg] &\ra 0,
	\end{align}
	which completes the proof of the current lemma.
\end{proof}

\blem\label{lem:help1} The convergence \eqref{app:lem6:1} holds true.
\elem
\bpr
By Taylor's theorem, the left-hand side of \eqref{app:lem6:1} is $\bbq^n_1(t) + \bbq^n_2(t)$ with
\begin{equation*}
	\begin{split}
		\bbq^n_1(t) & = \Delta_n^{\frac{1}{2}} \sum_{i = \theta_n + 1}^{[t/\Den]-L+1} 
		\sum_{\vert \chi \vert = 1} 
		\mathbb{E} \bigg[ \pd^{\chi} f \bigg(\frac{\un{\Delta}^n_i Y^{\di}}{\DenH }\bigg) (\kappa_i^n)^{\chi} \mathrel{\Big\vert}  \mathcal{F}^n_{i - \theta_n} \bigg], \\
		\bbq^n_2(t) & =  \Delta_n^{\frac{1}{2}} \sum_{i = \theta_n + 1}^{[t/\Den]-L+1} 
		\sum_{\vert \chi \vert = 2}  \frac1{\chi!}
		\mathbb{E}  [ \pd^{\chi} f  ( \ov\kappa_i^n  ) (\kappa_i^n)^{\chi}\mid \mathcal{F}^n_{i - \theta_n}  ],	
	\end{split}
\end{equation*}
where $\kappa_i^n =(\un{\Delta}^n_i Y^{\tr} - \un{\Delta}^n_i Y^{\di})/\DenH$ and $ \ov\kappa_i^n$ is some point on the line between $\un{\Delta}^n_i Y^{\tr}/\DenH$ and $\un{\Delta}^n_i Y^{\di}/\DenH$. By definition,
\begin{equation} \label{dec:6}
	\begin{split}
		\!\!(\kappa^n_i)_{k\ell} & = \frac{\Delta^n_{i+\ell -1} A^k}{\DenH} 
		+ \frac{1}{\DenH} \int_{(i+\ell - 2) \Delta_n}^{(i+\ell - 1) \Delta_n} \sum_{\ell' = 1}^{d'} \Big( \si_s^{k\ell'} - \si^{k\ell'}_{(i - 1) \Den} \Big) \, \dd B_s^{\ell'} \\ & \quad
		+ \sum_{q=1}^{Q} \int_{(i-\theta_n^{(q-1)})\Den}^{(i-\theta_n^{(q)})\Den} \frac{\Delta^n_{i+\ell -1} g(s)}{\DenH}  
		\sum_{\ell' = 1}^{d'}
		\Big( \rho_s^{k\ell'} - \rho^{k\ell'}_{(i-\theta_n^{(q-1)})\Den}  \Big)  \, \dd W^{\ell'}_s.
	\end{split}
\end{equation}
Using H\"older's inequality, the estimates \eqref{SSE:A'}, \eqref{SSE:BM} and \eqref{SSE:fBM} and the polynomial growth assumption on $\partial^\chi f$, we see that $\un{\Delta}^n_i Y^{\di}/\DenH$ is of size one and, since $0 < \theta^{(q)} < \frac12$,  
\begin{equation}\label{eq:Q}
	\bbe \bigg[ \sup_{t \leq T} \bv \bbq^n_2(t) \bv \bigg] \lesssim \Del^{-\frac12}
	\bigg( \Den^{2(1-H)} + \Den^{2(1-H)} + \sum_{q=1}^{Q} \Den^{(1-\theta^{(q-1)}) + 2\theta^{(q)}(1-H)} \bigg) 
	\ra 0.
\end{equation}

Next, we further split $\bbq^n_1(t) = \bbq^n_{11}(t) + \bbq^n_{12}(t) + \bbq^n_{13}(t)$ into three terms according to the decomposition \eqref{dec:6}. Using again \eqref{SSE:A'} and \eqref{SSE:BM}, we see that both $\bbq^n_{11}(t)$ and $\bbq^n_{12}(t)$ are of size $\Den^{-1/2 + (1-H)} = \Den^{1/2-H}$.
We first tackle the term $\bbq^n_{13}(t)$, which requires a more careful analysis. Here we need assumption  \eqref{repr:si-2} on the noise volatility   $\rho$. Since   $t \mapsto \int_{0}^{t} \wt{b}_s \, \dd s$ satisfies a better regularity condition than \eqref{eq:Hoelder}, we may incorporate the drift term  in $\rho^{(0)}$ for the remainder of the proof. Then we further write $\bbq^n_{13}(t)=\bbr^{n}_1(t) +\bbr^{n}_2(t)$ where $\bbr^{n}_1(t)$ and $\bbr^{n}_2(t)$ correspond to taking only $\rho^{(0)}$ and
$\int_{0}^{t} \wt{\rho}_s \, \dd \wt{W}_s$ instead of $\rho$, respectively.
By  \eqref{mom:ass:si:rho:0-2}, \eqref{SSE:fBM} and \eqref{it:CLT},  $\bbr^{n}_1(t)$ is of size 
\beq\label{eq:help}\sum_{q=1}^{Q} \Den^{-\frac12 + \ga ( 1-\theta^{(q-1)}) + \theta^{(q)}( 1 - H)}\to0.\eeq

For $\bbr^{n}_2(t)$, we write 	$\bbr^{n}_2(t)		=\sum_{\lvert \chi\rvert=1} (\bbr^{n,\chi}_{21}(t) + \bbr^{n,\chi}_{22}(t)  + \bbr^{n,\chi}_{23}(t))$, where, if $\chi_{k\ell} = 1$, 
\begin{align*}  
	\bbr^{n,\chi}_{21}(t) & = \Delta_n^{\frac{1}{2}} \sum_{i = \theta_n + 1}^{[t/\Den]-L+1} \sum_{\ell', \ell'' = 1}^{d}
	\mathbb{E} \bigg [ \pd^{\chi} f \bigg(\frac{\un{\Delta}^n_i Y^{\di}}{\DenH }\bigg) 	
	\sum_{q=1}^{Q} \int_{(i-\theta^{(q-1)}_n)\Den}^{(i-\theta^{(q)}_n)\Den} \frac{\Delta^n_{i+\ell -1} g(s)}{\DenH}  
	\\ &  \quad \times  
	\int_{(i-\theta_n^{(q-1)})\Den}^{s} \Big(\wt{\rho}_r^{k,\ell',\ell''} - \wt{\rho}_{(i-\theta_n^{(q-1)})\Den}^{k,\ell',\ell''} \Big) \, 
	\dd {\wt{W}^{\ell''}_r} \, \dd W^{\ell'}_s
	\mathrel{\Big|}\mathcal{F}^n_{i - \theta_n} \bigg ], \\
	\bbr^{n,\chi}_{22}(t) &= \Delta_n^{\frac{1}{2}} \sum_{i = \theta_n + 1}^{[t/\Den]-L+1} \sum_{q=1}^{Q} \sum_{\ell', \ell'' = 1}^{d}
	\mathbb{E} \bigg [ \bigg\{\pd^{\chi} f \bigg(\frac{\un{\Delta}^n_i Y^{\di}}{\DenH }\bigg) - \pd^{\chi} f \bigg(\frac{\un{\Delta}^n_i Y^{\di,q}}{\DenH }\bigg)\bigg\}	
	\\ &  \quad \times		\int_{(i-\theta^{(q-1)}_n)\Den}^{(i-\theta^{(q)}_n)\Den} \frac{\Delta^n_{i+\ell -1} g(s)}{\DenH}  	
	\int_{(i-\theta_n^{(q-1)})\Den}^{s} \wt{\rho}_{(i-\theta_n^{(q-1)})\Den}^{k,\ell',\ell''}  \,\dd {\wt{W}^{\ell''}_r} \, \dd W^{\ell'}_s
	\mathrel{\Big|}\mathcal{F}^n_{i - \theta_n} \bigg ], \\ 
	\bbr^{n,\chi}_{23}(t)  & =\Delta_n^{\frac{1}{2}} \sum_{i = \theta_n + 1}^{[t/\Den]-L+1} \sum_{q=1}^{Q} \sum_{\ell', \ell'' = 1}^{d}
	\mathbb{E} \bigg [ \pd^{\chi} f \bigg(\frac{\un{\Delta}^n_i Y^{\di,q}}{\DenH }\bigg) 
	\int_{(i-\theta^{(q-1)}_n)\Den}^{(i-\theta^{(q)}_n)\Den} \frac{\Delta^n_{i+\ell -1} g(s)}{\DenH}  	
	\\ & \quad \times 
	\int_{(i-\theta_n^{(q-1)})\Den}^{s} \wt{\rho}_{(i-\theta_n^{(q-1)})\Den}^{k,\ell',\ell''}  \, \dd{\wt{W}^{\ell''}_r} \, \dd W^{\ell'}_s
	\mathrel{\Big|}\mathcal{F}^n_{i - \theta_n} \bigg ]
\end{align*}
and 
$\un{\Delta}^n_i Y^{\di,q}  = \int_{(i-\theta_n^{(q-1)})\Den}^{(i + L - 1) \Delta_n} 
\rho_{(i- \theta_n^{(q-1)}) \Den } 
\, \dd W_s \, \uDeni g(s)$.
Using the BDG  and Minkowski integral inequality alternatingly,  we obtain, for any $p\geq2$,
\begin{align*}  
	& \bbe \bigg [ \bigg\vert \int_{(i-\theta^{(q-1)}_n)\Den}^{(i-\theta^{(q)}_n)\Den} \frac{\Delta^n_{i+\ell -1} g(s)}{\DenH}   \bigg(
	\int_{(i-\theta_n^{(q-1)})\Den}^{s} \Big(\wt{\rho}_r^{k,\ell',\ell''} - \wt{\rho}_{(i-\theta_n^{(q-1)})\Den}^{k,\ell',\ell''} \Big) \, 
	\dd {\wt{W}^{\ell''}_r} \bigg) \dd W^{\ell'}_s
	\bigg\vert^p \bigg ]^{\frac{1}{p}} \\ 
	& \quad\lec
	\bigg(\int_{(i-\theta^{(q-1)}_n)\Den}^{(i-\theta^{(q)}_n)\Den}  \frac{\Delta^n_{i+\ell -1} g(s)^2}{\Del^{2H}}  		 
	\bbe \bigg[  \bigg\vert
	\int_{(i-\theta_n^{(q-1)})\Den}^{s} \Big(\wt{\rho}_r^{k,\ell',\ell''} - \wt{\rho}_{(i-\theta_n^{(q-1)})\Den}^{k,\ell',\ell''} \Big) \, 
	\dd {\wt{W}^{\ell''}_r}	
	\bigg\vert^{2p} \bigg]^{\frac{1}{p}} \, \dd s \bigg)^{\frac{1}{2}} \\ 
	& \quad\lec
	\bigg ( \int_{(i-\theta^{(q-1)}_n)\Den}^{(i-\theta^{(q)}_n)\Den}  \frac{\Delta^n_{i+\ell -1} g(s)^2}{\Del^{2H}}  
	\int_{(i-\theta_n^{(q-1)})\Den}^{s}  \bbe \bigg[  \Big\vert
	\wt{\rho}_r^{k,\ell',\ell''} - \wt{\rho}_{(i-\theta_n^{(q-1)})\Den}^{k,\ell',\ell''}
	\Big\vert^{2p} \bigg]^{\frac{1}{p}} \, \dd r \, \dd s \bigg )^{\frac{1}{2}} \\  & \quad
	\lesssim (\theta_n^{(q-1)} \Den)^{\frac{1}{2}(1 + 2\eps')}\bigg( \int_{(i - \theta_n^{(q-1)})\Del}^{(i - \theta_n^{(q)}) \Den} \frac{\Delta^n_{i+\ell -1} g(s)^2}{\Del^{2H}}  \,\dd s
	\bigg)^{\frac{1}{2}}
	\lesssim \Den^{ (\frac{1}{2} + \eps' ) (1 - \theta^{(q-1)}) + \theta^{(q)}(1-H)},
\end{align*}
where $\eps'$ is as in \eqref{reg:ass:si:ti-2}. Thus, $\bbr^{n,\chi}_{21}(t)$ is of size 
$\sum_{q=1}^{Q} \Den^{-\frac12 + (\frac12 + \eps') (1 - \theta^{(q-1)}) + \theta^{(q)}(1-H)}$, which  is almost the same as \eqref{eq:help};  the only difference is that $\ga$ is replaced by $\frac12 + \eps'$. Since we can assume without loss of generality that $\frac12 + \eps' < \ga$, the formula \eqref{it:CLT} implies that we have
$-\frac12 + (\frac12+ \eps') (1 - \theta^{(q-1)}) + \theta^{(q)}(1-H) >0$ for all $q=1,\ldots,Q$, which means that $\bbr^{n,\chi}_{21}(t)$ is asymptotically negligible.

Next, using Lemma~\ref{lem:ker} (iii) and a similar estimate to the previous display, we see that $(\Theta^{n}_i - \un{\Delta}^n_i Y^{\di,q})/\DenH$ is of size $\Del^{\theta^{(q-1)}(1-H)}+\Den^{(1-\theta^{(q-1)})/2}$. Hence, with the two estimates \eqref{SSE:A'} and \eqref{SSE:BM} at hand, we deduce that $\bbr^{n,\chi}_{22}(t)$ is of size  
\begin{align*}&\sum_{q=1}^{Q} \Den^{-\frac12} (  \Den^{\frac12-H} + \Del^{\theta^{(q-1)}(1-H)}+\Den^{\frac12(1-\theta^{(q-1)})}) \Den^{ \theta^{(q)}(1-H)+ \frac12(1 - \theta^{(q-1)})}\\
	&\qquad\leq \sum_{q=1}^Q \Big(\Del^{\frac12-H-(\ga-\frac12)(1-\theta^{(q-1)})}+\Del^{(\ga+\frac12-H)\theta^{(q-1)}-(\ga-\frac12)}+\Del^{\theta^{(q)}(1-H)+(\frac12-\theta^{(q-1)})}\Big).
\end{align*} 
The last term clearly goes to $0$ because $\theta^{(q-1)}\leq\theta<\frac12$ by \eqref{theta:value}.
Without loss of generality, we can assume that $\ga>\frac12$ is sufficiently close to $\frac12$ such that the first term is negligible as well. With this particular value, we then make sure that 
$$ \frac{\ga-\frac12}{\ga+\frac12-H}<\theta^{(Q-1)}<\frac{\ga-\frac12}{\ga}, $$
which, on the one hand, is in line with \eqref{it:CLT} and, on the other hand, guarantees that the second term in the preceding display tends to $0$ for all $q=1,\dots,Q$.

Finally, to compute $\bbr^{n,\chi}_{23}(t)$, we  first condition on $\calf^n_{i-\theta_n^{(q-1)}}$. Because $f$ is even and 
$\un{\Delta}^n_i Y^{\di,q}/\DenH$ has a centered normal distribution given $\calf^n_{i-\theta_n^{(q-1)}}$, if follows that 
$\pd^{\chi} f (\Theta^{n,q}_i/\DenH )$ is an element of the direct sum of all odd-order Wiener chaoses. At the same time, the double stochastic integrals in $\bbr^{n,\chi}_{23}(t)$ belongs to the second Wiener chaos; see  Proposition 1.1.4 in \cite{Nual}. Since Wiener chaoses are mutually orthogonal, we obtain  $\bbr^{n,\chi}_{23}(t) = 0$.  Because this reasoning is valid for all multi-indices   with $\lvert \chi \rvert =1$, we have  shown that $\bbr^n_2(t)$ is asymptotically negligible.
\epr

\blem\label{lem:help2} The convergence \eqref{app:lem6:4} holds true.
\elem
\bpr
For $r=0,\dots, Q$ (where $Q$ is as in Lemma~\ref{lem:help1}), define
\begin{align*}
	\mathbb{Y}^{n,r}_i & = 
	\int_{(i - \theta_n)\Delta_n}^{(i + L - 1) \Delta_n} 
	\bigg( \sum_{q=1}^{r} \rho_{(i-\theta^{(q-1)}_n) \Den }
	\ind_{((i-\theta_n^{(q-1)})\Den, (i-\theta_n^{(q)})\Den)}(s) 
	\bigg) \, \dd W_s \, \frac{\uDeni g(s)}{\DenH}, \\
	\Upsilon^{n,r}_i & = c((i-1)\Den) \Den^{1-2H} 
	+ \sum_{q=r+1}^{Q} (\rho \rho^T)_{(i-\theta^{(q-1)}_n) \Den }  
	\int_{(i-\theta_n^{(q-1)})\Den}^{ (i-\theta_n^{(q)})\Den} 
	\frac{\uDeni g(s)^T \uDeni g(s)}{\Den^{2H}}  \, \dd s.
\end{align*}
Note that $\mathbb{Y}^{n,r}_i \in \bbr^{d \times L}$, $\Upsilon^{n,r}_i \in \bbr^{(d \times L) \times (d \times L)}$ and that 
$\mathbb{Y}^{n,Q}_i= \Theta^n_i/\DenH$ by \eqref{dis:CLT}. 
In order to show \eqref{app:lem6:4}, we need the following approximation result for each $r=1, \ldots,Q-1$:
\begin{equation} \label{NR9}
	\DenOneHalf \sum_{i=\theta_n + 1}^{[t/\Den]-L+1}
	\bbe \Big[ \mu_{f(\mathbb{Y}^{n,r}_i + \cdot)}  ( \Upsilon^{n,r,r}_i )  
	-  \mu_{f(\mathbb{Y}^{n,r}_i + \cdot)}  ( \Upsilon^{n,r,r-1}_i  ) 
	\mathrel{\big|} \calf^n_{i - \theta_n} \Big] \limL 0,
\end{equation}
where $\Upsilon^{n,r,q}_i=\bbe[ \Upsilon^{n,r}_i \mid \calf_{i - \theta_n^{(q)}} ]$.
Let us proceed with the proof of \eqref{app:lem6:4}, taking the previous statement for granted. Defining
\begin{equation*}
	\ov{\mathbb{Y}}^{n}_i  = \int_{(i - \theta_n)\Delta_n}^{(i  - 1) \Delta_n} 
	\sum_{q=1}^{Q} \rho_{(i-\theta^{(q-1)}_n) \Den }
	\ind_{((i-\theta_n^{(q-1)})\Den, (i-\theta_n^{(q)})\Den)}(s) 
	\, \dd W_s \, \frac{\uDeni g(s)}{\DenH},
\end{equation*}	
we can use the tower property of conditional expectation to derive
\begin{align*}
	& \bbe \bigg[f \bigg(\frac{\uDeni Y^{\di}}{\Den^H}\bigg) \mathrel{\Big|} \calf_{i- \theta_n} \bigg] 
	= \bbe \bigg[ \bbe \bigg[ f \bigg(\frac{\uDeni Y^{\di}}{\Den^H}\bigg) \mathrel{\Big|} \calf_{i- 1} \bigg] \mathrel{\Big|} \calf_{i- \theta_n} \bigg] \\ 
	& \qquad
	= \bbe \bigg[ \bbe \bigg[ \mu_{f(\ov{\mathbb{Y}}^{n}_i + \cdot)} 
	\bigg( c((i-1)\Den)\Den^{1-2H} \\ & \qquad \quad +	
	(	\rho  \rho^T)_{(i-\theta^{(Q-1)}_n) \Den } 
	\int_{(i - 1)\Delta_n}^{(i+L-1) \Delta_n} 
	\frac{\uDeni g(s)^T \uDeni g(s)}{\Den^{2H}}\,\dd s \bigg) \mathrel{\Big|} \calf_{i- \theta^{(Q-1)}_n} \bigg] \mathrel{\Big|} \calf_{i- \theta_n} \bigg] \\ & \qquad
	= \bbe\Big [ \mu_{f(\mathbb{Y}^{n,Q-1}_i + \cdot)}  ( \Upsilon^{n,Q-1}_i  ) \mathrel{\big|}  \calf_{i- \theta_n}  \Big].
\end{align*}
Thanks to \eqref{NR9},  we can replace $\Upsilon^{n,Q-1}_i=\Upsilon^{n,Q-1,Q-1}_i$ in the last line by $\Upsilon^{n,Q-1,Q-2}_i$. We can then further compute
\begin{equation} \label{NR13}
	\begin{split}
		& \bbe \Big[ \bbe \Big[ \mu_{f(\mathbb{Y}^{n,Q-1}_i + \cdot)} \Big( \Upsilon^{n,Q-1,Q-2}_i \Big) 
		\mathrel{\big|} \calf_{i- \theta^{(Q-2)}_n} \Big] \mathrel{\big|} \calf_{i- \theta_n} \Big] \\ & \qquad
		= \bbe \Big[ \mu_{f(\mathbb{Y}^{n,Q-2}_i + \cdot)} \Big( \Upsilon^{n,Q-2,Q-2}_i\Big) 
		\mathrel{\big|} \calf_{i- \theta_n} \Big] \\ & \qquad
		= \bbe \Big[ \bbe \Big[ \mu_{f(\mathbb{Y}^{n,Q-2}_i + \cdot)} \Big( \Upsilon^{n,Q-2,Q-2}_i \Big) 
		\mathrel{\big|} \calf_{i- \theta^{(Q-3)}_n} \Big] \mathrel{\big|} \calf_{i- \theta_n} \Big].
	\end{split}
\end{equation}
Again by \eqref{NR9}, we may replace $\Upsilon^{n,Q-2,Q-2}_i$ by $\Upsilon^{n,Q-2,Q-3}_i$ in \eqref{NR13}. Repeating this procedure $Q$ times, we obtain 
$\mu_{f(\mathbb{Y}^{n,0}_i + \cdot)}(\bbe [\Upsilon^{n,0}_i \mid \calf_{i- \theta^{(0)}_n} ])
= \mu_f(\bbe [\Upsilon^{n,0}_i \mid \calf_{i- \theta_n} ])$ in the end, which shows \eqref{app:lem6:4}.

It remains to prove \eqref{NR9}. For  $(u,v) \mapsto \mu_{f(u+\cdot)}(v)$, we use $\partial^{{\chi'}}$ to denote differentiation with respect to $u$ (where ${\chi'}\in \bbn_0^{d \times L}$) and $\partial^{{\chi''}}$  to denote differentiation with respect to $v$ (where ${\chi''}\in \bbn_0^{(d \times L) \times (d \times L)}$). By a Taylor expansion of
$\mu_{f(\mathbb{Y}^{n,r}_i + \cdot)} (\cdot)$ around the point 
$(\mathbb{Y}^{n,r}_i, \Upsilon^{n,r,r-1}_i)$, the difference inside $\E[\cdot\mid \calf^n_{i-\theta_n}]$ in \eqref{NR9} equals
\begin{equation} \label{NR10}
	\begin{split}
		& \DenOneHalf \sum_{i=\theta_n + 1}^{[t/\Den]-L+1} \sum_{\lvert {\chi''} \rvert = 1}
		\bbe \Big[ 
		\pd^{{\chi''}}  \mu_{f(\mathbb{Y}^{n,r}_i + \cdot)}(\Upsilon^{n,r,r-1}_i) 
		(\Upsilon^{n,r,r}_i- \Upsilon^{n,r,r-1}_i )^{\chi'' }
		\mathrel{\big|} \calf^n_{i - \theta_n} \Big] \\ & \quad +
		\DenOneHalf \sum_{i=\theta_n + 1}^{[t/\Den]-L+1} \sum_{\lvert{\chi''} \rvert = 2} \frac{1}{\chi''!}
		\bbe \Big[ 
		\pd^{{\chi''}}  \mu_{f(\mathbb{Y}^{n,r}_i + \cdot)}(\ov{\upsilon}^n_i)   ( \Upsilon^{n,r,r}_i- \Upsilon^{n,r,r-1}_i  )^{{\chi''}}
		\mathrel{\big|}  \calf^n_{i - \theta_n} \Big]
	\end{split}
\end{equation}
for some
$\ov{\upsilon}^n_i$ between $\Upsilon^{n,r,r}_i$ and $\Upsilon^{n,r,r-1}_i$. 	Write
\begin{equation}\label{eq:cond}
	\begin{split}
		& \bbe \Big [(\rho \rho^T)_{(i - \theta_n^{(q-1)}) \Den} \mathrel{\big|} \mathcal{F}^n_{i-\theta^{(r)}_n}  \Big] - 
		\mathbb{E} \Big [ (\rho  \rho^T)_{(i - \theta_n^{(q-1)}) \Den} \mathrel{\big|} \mathcal{F}^n_{i-\theta^{(r-1)}_n} \Big ] \\ & \qquad =
		\bbe \Big [(\rho \rho^T)_{(i - \theta_n^{(q-1)}) \Den} 
		- (\rho \rho^T)_{(i - \theta_n^{(r-1)}) \Den} \mathrel{\big|} \mathcal{F}^n_{i-\theta^{(r)}_n} \Big ] 	
		\\ &\quad \qquad -
		\mathbb{E}  \Big[(\rho \rho^T)_{(i - \theta_n^{(q-1)}) \Den} - 
		(\rho  \rho^T)_{(i - \theta^{(r-1)}_n) \Den} \mathrel{\big|} \mathcal{F}^n_{i-\theta^{(r-1)}_n} \Big ],
	\end{split}
\end{equation}
and note that, because of Assumption~\ref{Ass:A'} and the identity
\begin{equation} \label{ident:3}
	xy-x_0y_0=y_0(x-x_0)+x_0(y-y_0)+(x-x_0)(y-y_0),
\end{equation}	
the two conditional expectations on the right-hand side of \eqref{eq:cond} are both of size $(\theta^{(r-1)}_n \Den)^{1/2}$. The same holds true if we replace $\rho_{(i-\theta_n^{(q-1)})\Del}$ by $\si_{(i-1)\Del}$. Therefore, 
\begin{equation} \label{NR12}
	\begin{split}
		& \bbe \Big [   \Vert \Upsilon^{n,r,r}_i- \Upsilon^{n,r,r-1}_i   \Vert^p\Big  ]^{\frac{1}{p}}
		\lesssim (\theta^{(r-1)}_n \Den)^{\frac12}.
	\end{split}
\end{equation}
Thus, the second expression in \eqref{NR10} is of size $\Den^{-1/2} ((\theta^{(r-1)}_n \Den)^{1/2})^2 = 
\Den^{1/2-\theta^{(r-1)}}$ which goes to 0 as $\nto$ since all numbers $\theta^{(r)}$ are chosen to be smaller than $\frac12$; see \eqref{it:CLT}.

Next, we expand $\pd^{\chi}  \mu_{f(\mathbb{Y}^{n,r}_i + \cdot)}(\cdot)$ around  
$(0, \Upsilon^{n,r,r-1}_i)$ and write the first expression in \eqref{NR10} as $\bbs^{n}_1(t)+ \bbs^{n}_2(t) + \bbs^{n}_3(t)$, where  
\begin{align*} 
	\bbs^{n}_1(t)& = 
	\DenOneHalf \sum_{i=\theta_n + 1}^{[t/\Den]-L+1} \sum_{\vert {\chi''} \vert = 1}
	\bbe \Big [ 
	\pd^{{\chi''}}  \mu_{f}(\Upsilon^{n,r,r-1}_i)  
	(\Upsilon^{n,r,r}_i- \Upsilon^{n,r,r-1}_i )^{{\chi''}}
	\mathrel{\big|} \calf^n_{i - \theta_n} \Big], \\
	\bbs^{n}_2(t)& = 
	\DenOneHalf \sum_{i=\theta_n + 1}^{[t/\Den]-L+1} \sum_{\vert {\chi'} \vert =   \vert {\chi''} \vert = 1}
	\bbe \Big [ 
	\pd^{\chi'}\pd^{\chi''}  \mu_{f}(\Upsilon^{n,r,r-1}_i)  
	(\mathbb{Y}^{n,r}_i )^{{\chi'}}
	(\Upsilon^{n,r,r}_i- \Upsilon^{n,r,r-1}_i  )^{{\chi''}}
	\mathrel{\big|} \calf^n_{i - \theta_n} \Big ], \\ 
	\bbs^{n}_3(t)& =  \mathtoolsset{multlined-width=0.9\displaywidth} \begin{multlined}[t]
		\DenOneHalf \sum_{i=\theta_n + 1}^{[t/\Den]-L+1} \sum_{\vert {\chi'} \vert = 2, \, \vert {\chi''} \vert = 1}
		\frac{1}{\chi'!}	\bbe \Big [ 
		\pd^{\chi'}\pd^{\chi''} \mu_{f(\varsigma^n_i +\cdot)}(\Upsilon^{n,r,r-1}_i) \\
		\times			 (\mathbb{Y}^{n,r}_i )^{{\chi'}}
		( \Upsilon^{n,r,r}_i- \Upsilon^{n,r,r-1}_i )^{{\chi''}}
		\mathrel{\big|} \calf^n_{i - \theta_n} \Big ],\end{multlined}
\end{align*}
and $\varsigma^n_i$ is a point between 0 and $\mathbb{Y}^{n,r}_i$.
Observe that $\pd^{{\chi''}}  \mu_{f}(\Upsilon^{n,r,r-1}_i)$ is $\calf_{i - \theta_n^{(r-1)}}$-measurable and that the $\calf^n_{i - \theta^{(r-1)}_n}$-conditional expectation of $\Upsilon^{n,r,r}_i - \Upsilon^{n,r,r-1}_i$ is 0. Hence,
$$
\bbe  [ 
\pd^{{\chi''}}  \mu_{f}(\Upsilon^{n,r,r-1}_i) 
( \Upsilon^{n,r,r}_i -\Upsilon^{n,r,r-1}_i )^{{\chi''}}
\mid \calf^n_{i - \theta_n}  ] =0
$$
and it follows that $\bbs^{n}_1(t)$   vanishes.
Next,
by \cite{Chong20supp}, Equation~(D.46),
given  $\lvert {\chi'} \rvert = \lvert {\chi''} \rvert =1$, there are $\al, \beta, \ga \in \{1, \ldots, d\} \times \{1, \ldots, L\}$ such that	
\begin{equation*}
	\pd^{\chi'}\pd^{\chi''} \mu_{f(u+ \cdot)}(v) = \frac{\partial \mu_{f(u+ \cdot)}}{\partial u_{\ga} \partial v_{\al, \beta}} (v) 
	= \frac{1}{2^{\ind_{\{\al = \beta \}}}} \mu_{\pd_{\al \beta \ga} f(u+ \cdot)}(v).
\end{equation*}
If $u=0$, since $f$ has odd third derivatives, we have that $\mu_{\pd_{\al \beta \ga} f}(v) = 0$. Therefore, 
the $\pd^{\chi'}\pd^{\chi''}  \mu_{f}$-expression in $\bbs^{n}_2(t)$ is equal to 0, so   $\bbs^{n}_2(t)$ vanishes as well. 
Finally, we use the generalized H\"older inequality and the estimates \eqref{NR12} and \eqref{SSE:fBM} to see that 
\begin{equation*}
	\begin{split}
		\bbe \bigg[ \sup_{t \leq T} \big\vert \bbs^{n}_3(t) \big\vert \bigg]  & \lesssim 
		\DenOneHalf \sum_{i=\theta_n + 1}^{[T/\Den]-L+1} 
		\bbe   [ \Vert \mathbb{Y}^{n,r}_i  \Vert^4  ]^{\frac{1}{2}}
		\bbe  \Big[  \Vert \Upsilon^{n,r,r}_i- \Upsilon^{n,r,r-1}_i \Vert^4 \Big]^{\frac{1}{4}} \\ & \lesssim
		\Den^{-\frac{1}{2}}    \Den^{2\theta^{(r)}(1-H)}   (\theta^{(r-1)}_n \Den)^{\frac12}.
	\end{split}
\end{equation*}
This converges to $0$	as $\nto$ if $2\theta^{(r)}(1-H)-\frac12\theta^{(r-1)}>0$ for all $r=1,\dots,Q-1$, which is equivalent to $\theta^{(r)}>\frac{1}{4(1-H)}\theta^{(r-1)}$. Because $\frac{1}{4(1-H)}<1$, this condition  means that $\theta^{(r)}$ must not decrease to $0$ too fast. By adding more intermediate $\theta$'s between $\theta^{(0)}$ and $\theta^{(Q-1)}$ if necessary, which does no harm to \eqref{it:CLT}, we can make sure   this is satisfied.
\epr

\blem\label{lem:help3} The convergences \eqref{app:lem6:2} and \eqref{app:lem6:3} hold true.
\elem
\begin{proof}
	By Taylor's theorem, $	\mu_f(\Upsilon_i^{n,0}) - \mu_f (\Upsilon^{n,0,0}_i) $ is equal to
	\begin{equation}\label{eq:taylor}
		\begin{split}
			\sum_{\vert \chi \vert = 1} \pd^{\chi} \mu_f  ( \Upsilon^{n,0,0}_i ) 
			( \Upsilon_i^{n,0} - \Upsilon^{n,0,0}_i  )^{\chi}  
			+  \sum_{\vert \chi \vert = 2} \frac{1}{\chi!}\pd^{\chi} \mu_f (\wt{\upsilon}_i^n) 
			( \Upsilon_i^{n,0} - \Upsilon^{n,0,0}_i  )^{\chi}
		\end{split}
	\end{equation}
	for some $\wt{\upsilon}_i^n$ on the line between $\Upsilon_i^{n,0}$ and $\Upsilon^{n,0,0}_i$. The expression  $\Upsilon_i^{n,0} - \Upsilon^{n,0,0}_i$ contains the difference 
	$
	( \rho\rho^T)_{(i - \theta_n^{(q-1)}) \Den} - 
	\mathbb{E}  [ (\rho \rho^T)_{(i - \theta_n^{(q-1)}) \Den}\mid\mathcal{F}^n_{i-\theta_n}  ] 
	$
	and a similar one with  
	$\rho_{(i - \theta_n^{(q-1)}) \Den}$ replaced by 
	$\si_{(i - 1) \Den}$. Inserting $\rho\rho^T$ or $\si\si^T$ at $(i - \theta_n) \Den$ artificially (cf.\ \eqref{eq:cond}), we can use \eqref{ident:3} and Assumption~\ref{Ass:A'} to find that the said difference
	is of size at most $(\theta_n \Delta_n)^{1/2}$. This immediately leads to the bound
	$\mathbb{E}[\Vert \Upsilon_i^{n,0} - \Upsilon^{n,0,0}_i\Vert^{2}]^{1/2} \lesssim (\theta_n \Delta_n)^{1/2}$, which in turn shows that the second-order term in \eqref{eq:taylor} is $o_\P(\sqrt{\Del})$ by \eqref{theta:value}. Therefore, in \eqref{app:lem6:2}, it remains to consider
	$ \Delh \sum_{i=\theta_n+1}^{[t/\Del]-L+1} \sum_{\vert \chi \vert = 1} \pd^{\chi} \mu_f  ( \Upsilon^{n,0,0}_i ) 
	( \Upsilon_i^{n,0} - \Upsilon^{n,0,0}_i  )^{\chi}$.
	For each $i$, the $\sum_{\vert \chi \vert = 1}$-expression is $\mathcal{F}_i^n$-measurable and has a vanishing conditional expectation given $\mathcal{F}_{i - \theta_n}^n$. Thus, by a martingale size estimate of the type \eqref{MSE}, the whole term is of size
	$\sqrt{\theta_n} (\theta_n \Delta_n)^{{1}/{2}}$ at most,
	which tends to $0$ by \eqref{theta:value}.
	This proves \eqref{app:lem6:2}.
	
	For \eqref{app:lem6:3}, recall $\Upsilon^{n,i}$ from \eqref{cov:v1} and note that the difference $( \Upsilon^{n,i} - \Upsilon_i^{n,0})_{k\ell,k'\ell'}$ equals
	\begin{equation*}
		\sum_{q=1}^{Q} \Big( (\rho \rho^T)_{(i - 1)\Delta_n}  - 
		(\rho  \rho^T)_{(i - \theta^{(q-1)}_n)\Delta_n} \Big)_{k k'}	 
		\int_{(i - \theta^{(q-1)}_n)\Delta_n}^{(i - \theta^{(q)}_n)\Delta_n} \frac{\Delta^n_{i+\ell - 1} g(s) \Delta^n_{i+\ell' - 1} g(s)}{\Den^{2H}} 
		\,  \dd s
	\end{equation*}
	for all $k, k' =1, \ldots, d$ and $\ell, \ell' =1, \ldots, L$. Thus, if we expand 
	\begin{align} \nonumber
		& \DenOneHalf \sum_{i=\theta_n + 1}^{[t/\Del]-L+1}  \{ \mu_f ( \Upsilon^{n,i} ) - \mu_f ( \Upsilon_i^{n,0} )
		\} =
		\DenOneHalf \sum_{i=\theta_n + 1}^{[t/\Delta_n]-L+1} \sum_{\vert \chi \vert =1} 
		\pd^{\chi} \mu_f (\Upsilon^{n,i}) (\Upsilon^{n,i}-\Upsilon^{n,0})^{\chi}\\
		&\qquad				+  \DenOneHalf \sum_{i=\theta_n + 1}^{[t/\Delta_n]-L+1} \sum_{\vert \chi \vert =2} \frac{1}{\chi!}
		\pd^{\chi} \mu_f (\wh\upsilon^n_i) (\Upsilon^{n,i}-\Upsilon^{n,0})^{\chi},\label{NR2}
	\end{align}
	where $\wh\upsilon^n_i$ is some point between $\Upsilon^{n,i}$ and $\Upsilon_i^{n,0}$, H\"older's inequality together with the identity \eqref{ident:3} as well as the moment and regularity assumptions on $\rho$ shows that the last sum in the above display is of size
	$
	\Delta_n^{-1/2} \sum_{q = 1}^{Q} (\theta_n^{(q-1)} \Den) \Den^{4\theta^{(q)}(1-H)}
	$,
	which goes to 0 as $\nto$; cf.\ \eqref{eq:Q}. Next, recall the decomposition \eqref{repr:si-2}. As before, we  incorporate the drift $t \mapsto \int_{0}^{t} \wt{b}_s \, \dd s$ into $\rho^{(0)}$ so that  $\rho = \rho^{(0)} + \rho^{(1)}$ with 
	$\rho^{(1)}_t = \int_{0}^{t} \wt{\rho}_s \, \dd {\wt{W}_s}$. By \eqref{ident:3},
	\begin{align*}  
		& \rho_{(i - 1) \Den}^{k\ell} \rho_{(i - 1) \Den}^{k'\ell} - \rho_{(i - \theta_n^{(q-1)}) \Den}^{k\ell} \rho_{(i - \theta_n^{(q-1)}) \Den}^{k'\ell} \\ & \quad = 
		\Big(\rho_{(i - \theta_n^{(q-1)}) \Den}^{k\ell} \Big\{ \rho_{(i - 1) \Den}^{(0),k'\ell} -  \rho_{(i - \theta_n^{(q-1)}) \Den}^{(0),k'\ell} \Big\}
		+ \rho_{(i - \theta_n^{(q-1)}) \Den}^{k'\ell}  \Big\{ \rho_{(i - 1) \Den}^{(0),k\ell} -  \rho_{(i - \theta_n^{(q-1)}) \Den}^{(0),k\ell} \Big\}\Big)\\ & \qquad
		+ \Big(\rho_{(i - \theta_n^{(q-1)}) \Den}^{k\ell} \Big\{ \rho_{(i - 1) \Den}^{(1),k'\ell} -  \rho_{(i - \theta_n^{(q-1)}) \Den}^{(1),k'\ell} \Big\}    
		+ \rho_{(i - \theta_n^{(q-1)}) \Den}^{k'\ell} \Big\{ \rho_{(i - 1) \Den}^{(1),k\ell} -  \rho_{(i - \theta_n^{(q-1)}) \Den}^{(1),k\ell} \Big\} \Big)  \\ & \qquad
		+   \Big(  \rho_{(i - 1) \Den}^{k\ell}-\rho_{(i - \theta_n^{(q-1)}) \Den}^{k\ell} \Big) 
		\Big(  \rho_{(i - 1) \Den}^{k'\ell}-\rho_{(i - \theta_n^{(q-1)}) \Den}^{k'\ell}  \Big).
	\end{align*}
	The remaining term $\Den^{1/2} \sum_{i=\theta_n + 1}^{[t/\Delta_n]-L+1} \sum_{\vert \chi \vert =1} 
	\pd^{\chi} \mu_f (\Upsilon^{n,i}) (\Upsilon^{n,i}-\Upsilon^{n,0})^{\chi}$ in \eqref{NR2} can thus be written as $\bbt^n_1(t)+\bbt^n_2(t)+\bbt^n_3(t)$ according to this decomposition. By H\"older's inequality and the moment and regularity assumptions on $\rho$,  $\bbt^n_3(t)$ is of size at most
	\begin{equation} \label{size:1}
		\DenMinOneHalf \sum_{q = 1}^{Q} (\theta_n^{(q-1)} \Den) \Den^{2\theta^{(q)}(1-H)},
	\end{equation}
	which goes to 0 as $\nto$ as we saw in \eqref{eq:Q}. Similarly,  thanks to the regularity property \eqref{eq:Hoelder} of $\rho^{(0)}$, we further obtain
	$
	\bbe  [ \sup_{t \leq T}  \vert \bbt^n_1(t)  \vert  ] \lesssim \Delta_n^{-1/2} \sum_{q = 1}^{Q} (\theta_n^{(q-1)} \Den)^{\ga} \Den^{2\theta^{(q)}(1-H)},
	$
	and this also goes to 0 as $\nto$ by our choice \eqref{it:CLT} of the numbers $\theta_n^{(q-1)}$.  Finally,
	\begin{equation*}  
		\begin{split}
			\bbt^n_2(t) & = 
			\DenOneHalf \sum_{i=\theta_n + 1}^{[t/\Delta_n]-L+1}\sum_{q=1}^{Q}  \sum_{\vert \chi \vert =1} 
			\pd^{\chi} \mu_f (\Upsilon^{n,i})   \\
			&\quad\times\bigg \{ \pi^{n,i}_{q-1} 
			\int_{(i - \theta_n)\Delta_n}^{(i + L - 1) \Delta_n} \frac{\uDeni g(s)^T \uDeni g(s)}{\Den^{2H}} 
			\ind_{ ((i - \theta^{(q-1)}_n)\Delta_n, (i - \theta^{(q)}_n)\Delta_n) }(s) \,  \dd s \bigg \}^{\chi},
		\end{split}
	\end{equation*}
	where 
	$
	\pi^{n,i}_q = \rho_{(i - \theta_n^{(q)}) \Den}   (\rho_{(i - 1) \Den}^{(1)} -  \rho_{(i - \theta_n^{(q)}) \Den}^{(1)}  )^T   
	+  (\rho_{(i - 1) \Den}^{(1)} -  \rho_{(i - \theta_n^{(q)}) \Den}^{(1)}  ) \rho_{(i - \theta_n^{(q)}) \Den}^T.
	$
	Define $\wt \bbt^n_2(t)$  in the same way as $\bbt^n_2(t)$ except that in the previous display, $\Upsilon^{n,i}$ is replaced by $\wt{\Upsilon}_{q-1}^{n,i}$, obtained from $\Upsilon^{n,i}$ by substituting $(i - \theta^{(q-1)}_n)\Delta_n$ for $(i-1)\Den$  everywhere. By H\"older's inequality and the regularity assumptions on $\rho$ and $\si$,   $ \bbt^n_2(t)-\wt  \bbt^n_2(t)$ is of the same size as exhibited in \eqref{size:1}   and hence asymptotically negligible. Next,
	\begin{equation} \label{dec:L2:p}
		\begin{split}
			\wt  \bbt^n_2(t) & = 
			\sum_{q=1}^{Q} \DenOneHalf \sum_{i=\theta_n + 1}^{[t/\Delta_n]-L+1}  \sum_{\vert \chi \vert =1} 
			\pd^{\chi} \mu_f (\wt{\Upsilon}_{q-1}^{n,i}) \bigg (
			\bigg\{	\Big(\pi^{n,i}_{q-1} - \bbe  [ \pi^{n,i}_{q-1} \mid \calf^n_{i-\theta^{(q-1)}_n}  ] \Big) \\ &  \quad  
			+
			\bbe  [ \pi^{n,i}_{q-1} \mid \calf^n_{i-\theta^{(q-1)}_n}  ]\bigg\} \int_{(i - \theta^{(q-1)}_n)\Delta_n}^{(i - \theta^{(q)}_n)\Delta_n} \frac{\uDeni g(s)^T \uDeni g(s)}{\Den^{2H}} 
			\,  \dd s \bigg )^{\chi} .
		\end{split}
	\end{equation}
	For fixed $q$, the part that involves $\pi^{n,i}_{q-1} - \bbe  [ \pi^{n,i}_{q-1} \mid \calf^n_{i-\theta^{(q-1)}_n}  ]$  is a sum where the $i$th summand is $\calf^n_{i+L-1}$-measurable and has, by construction, a zero $\calf^n_{i-\theta^{(q-1)}_n}$-conditional mean. By a martingale size estimate of the type \eqref{MSE}, that part   is therefore of size 
	$$
	\sum_{q=1}^{Q}  \sqrt{\theta_n^{(q-1)} }(\theta_n^{(q-1)} \Den)^{1/2} \Den^{2\theta^{(q)}(1-H)} =
	\sum_{q=1}^{Q}  \Den^{  {1}/{2} - \theta_n^{(q-1)} + 2\theta^{(q)}(1-H)} \ra 0
	$$
	as $\nto$ since all $\theta_n^{(q)} < \frac12$. Clearly,
	\begin{equation*}
		\bbe \bigg[\rho_{(i - 1) \Den}^{(1),k\ell} -  \rho_{(i - \theta_n^{(q-1)}) \Den}^{(1),k\ell} \mathrel{\Big|} \calf^n_{i-\theta^{(q-1)}_n}\bigg] =
		\sum_{m=1}^{d} \bbe \bigg[  \int_{(i - \theta_n^{(q-1)}) \Den}^{(i - 1) \Den}\wti{\rho}^{k\ell m}_s \, 
		\dd \wt{W}^m_s \mathrel{\Big|} \calf^n_{i-\theta^{(q-1)}_n} \bigg]=0.
	\end{equation*}
	Because $ \rho_{(i - \theta_n^{(q)}) \Den}$ is $\calf^n_{i-\theta_n^{(q)}}$-measurable, we have, in fact, 
	$
	\bbe  [ \pi^{n,i}_{q-1} \mid \calf^n_{i-\theta^{(q-1)}_n}  ] =0.
	$
	Therefore, $\bbt_2^n(t)$ is asymptotically negligible and the proof of \eqref{app:lem6:3} is complete. 	
\end{proof}

\begin{proof}[Proof of Lemma \ref{lem:Taylor} ]
	Recall the expressions $\mathbb{X}_1^{n}(t)$ and $\mathbb{X}_2^{n}(t)$ defined in \eqref{CLT:ZGW:2}. For a given multi-index $\chi  \in \bbn_0^{(d \times L) \times (d \times L)}$, let 
	$Q_{\chi}(x) = x^{\chi}$ for $x \in \R^{(d\times L)\times(d\times L)}$,
	which is a polynomial of degree $\lvert \chi \rvert$. 
	By Taylor's theorem,
	\begin{equation} \label{tay:ZGW}
		\begin{split}
			\mathbb{X}_1^{n}(t) & =
			\DenOneHalf \sum_{i=\theta_n + 1}^{[t/\Delta_n]-L+1}
			\sum_{j = 1}^{N(H)} 
			\sum_{\vert \chi \vert = j} 	\frac{1}{\chi!}\pd^{\chi} \mu_f(\pi( (i-1)\Den )) 	\sum_{k=1}^{j}  \sum_{\vert \chi' \vert = k} \frac{\Den^{(j-k)(1-2H)}}{\chi'!} \\ & \quad \times 
			\pd^{\chi'} Q_{\chi}(c( (i-1)\Den )) 
			\Big \{ \Upsilon^{n,i} - \pi( (i-1)\Den ) - \Den^{1-2H} c( (i-1)\Den )  \Big\}^{\chi'}.
		\end{split}
	\end{equation}
	The key term in \eqref{tay:ZGW} is the expression in braces and we have (recall \eqref{mat:lim:thms:2} and \eqref{num:Ga})
	\begin{equation} \label{ZGW}
		\begin{split}
			& \Upsilon^{n,i} - \pi( (i-1)\Den ) - \Den^{1-2H} c( (i-1)\Den ) \\ & \quad =
			(	\rho  \rho^T)_{(i-1) \Den} \bigg\{  \int_{(i - \theta_n)\Delta_n}^{(i + L - 1) \Delta_n} \frac{\uDeni g(s)^T \uDeni g(s)}{\Den^{2H}} \,  \dd s
			-  (\Ga^H_{\vert \ell - \ell'\vert} )_{\ell,\ell'=1}^{L,L} \bigg\} \\ & \quad =
			-(\rho \rho^T)_{(i-1) \Den}  \int_{- \infty}^{(i - \theta_n)\Delta_n} \frac{\uDeni g(s)^T \uDeni g(s)}{\Den^{2H}} \,  \dd s,
		\end{split}
	\end{equation}
	because $\Ga^H_{\vert \ell - \ell'\vert} = \Den^{-2H}
	\int_{- \infty}^{\infty} \Delta^n_{i+\ell} g(s) \Delta^n_{i+\ell'} g(s) \,  \dd s$ by \eqref{est:ker:2}. The size of the last integral is 
	$\Den^{2\theta(1-H)}$ by Lemma \ref{lem:ker} (iii). Consequently, if we apply   H\"older's inequality to \eqref{tay:ZGW}, we obtain that
	$
	\bbe  [ \sup_{t \leq T}   \vert \mathbb{X}_1^{n}(t)   \vert  ] \lesssim
	\Den^{- {1}/{2}} \sum_{j = 1}^{N(H)}  \sum_{k=1}^{j}\Den^{(j-k)(1-2H)} \Den^{k2\theta(1-H)}
	\lesssim \Den^{- {1}/{2}+2\theta(1-H)}\to0
	$
	by \eqref{theta:value}.
	Using  \eqref{ZGW} and Assumption~\ref{Ass:A'}, we further see that the magnitude of
	$\Upsilon^{n,i} - \pi( (i-1)\Den )$ is $ \lesssim \Den^{1-2H} + \Den^{2\theta(1-H)}$. Thus, again by   H\"older's inequality, we   deduce that
	$
	\bbe  [ \sup_{t \leq T}   \vert \mathbb{X}_2^{n}(t)   \vert  ] \lesssim
	\Den^{- {1}/{2}}   ( \Den^{(N(H)+1)(1-2H)} + \Den^{(N(H)+1)2\theta(1-H)}  )\to0
	$
	by the definition of $N(H)$.
\end{proof}   

\begin{proof}[Proof of Lemma \ref{lem:app:10}] 	The first convergence \eqref{app:ZGW} can be shown analogously to Equation (5.3.24) in \cite{JP} and is omitted. For \eqref{app:ZGW:2}, we
	write the left-hand side  as $\sum_{j=1}^{N(H)} \mathbb{Z}_j^{n}(t) - \ov{\mathbb{Z}}^{n}(t)$ where 
	\begin{align*}
		\mathbb{Z}_j^{n}(t) & =\mathtoolsset{multlined-width=0.9\displaywidth} \begin{multlined}[t] \Delta_n^{- \frac{1}{2} + j (1-2H) } \sum_{i=\theta_n + 1}^{[t/\Delta_n]-L+1}   \sum_{\vert \chi \vert = j}  \frac{1}{\chi!}\int_{(i-1) \Den}^{i \Den}
			\Big \{ \pd^{\chi} \mu_f  (\pi((i-1)\Den) ) {c((i-1)\Den)}^{\chi} \\ 
			- \pd^{\chi} \mu_f  (\pi(s) ) c(s)^{\chi} \Big \} \, \dd s, \end{multlined}\\
		\ov{\mathbb{Z}}^{n} (t)& = \Delta_n^{- \frac{1}{2}} \bigg( \int_0^{\theta_n \Den } + \int_{([t/\Delta_n] +L-1)\Den}^t \bigg)
		\sum_{j=1}^{N(H)}  
		\sum_{\vert \chi \vert = j} \frac{1}{\chi!}\pd^{\chi} \mu_f  (\pi( s ) )  
		\Den^{j(1-2H)} c(s)^{\chi} 
		\, \dd s.
	\end{align*}
	Using the moment assumptions on $\si$ and $\rho$, since $t - ([t/\Delta_n]-L+1)\Den \leq L\Den$, we readily see that 
	$ \bbe [\sup_{t \leq T}  \vert \ov{\mathbb{Z}}^{n} (t) \vert  ] \lesssim \Delta_n^{- 1/2} (\theta_n \Den + L\Den) \lesssim \Den^{ 1/2 - \theta} + \Den^{ 1/2}\to0$.
	
	Let $j = 1, \ldots, N(H)$ (in particular, everything in the following can be skipped if $H<\frac14$) and consider, for $\chi \in 
	\bbn_0^{(d \times L) \times (d \times L)}$,  again the polynomial $Q_{\chi}$  introduced in proof of Lemma~\ref{lem:Taylor}. Using the mean-value theorem, we can write
	\begin{align*}
		\mathbb{Z}_j^{n}(t) & = \Delta_n^{- \frac{1}{2} + j (1-2H) } \sum_{i=\theta_n + 1}^{[t/\Delta_n]-L+1}   \sum_{\vert \chi \vert = j} \frac{1}{\chi!} \int_{(i-1) \Den}^{i \Den}
		\sum_{\vert \chi_1 + \chi_2 \vert = 1} \pd^{\chi + \chi_1} \mu_f (\zeta^1_{n,i}) \pd^{\chi_2} Q_{\chi}(\zeta^2_{n,i}) \\ & \qquad \times
		\{ \pi((i-1)\Den) - \pi(s) \}^{\chi_1} \{ c((i-1)\Den) - c(s) \}^{\chi_2} \, \dd s
	\end{align*}
	for some    $\zeta^1_{n,i}$ and $\zeta^2_{n,i}$. By  H\"older's inequality and Assumption~\ref{Ass:A}, we deduce that  
	$\bbe [\sup_{t \leq T}  \vert \mathbb{Z}_j^{n}(t) \vert ] \lesssim \Delta_n^{- 1/2 + j(1-2H)} \Den^{-1} \Den \Den^{1/2} = \Delta_n^{j(1-2H)}\to0$ for any $H<\frac12$. 
\end{proof}

\section{Proofs of Theorem \ref{thm:main} and Corollary~\ref{cor:feas}}\label{SecC}

\begin{proof}[Proof of Theorem~\ref{thm:main}]
	We first consider the case where $H\in(\frac14,\frac12)$ and would like to apply Theorem~A.2 in \cite{MP23}. To this end,  define $A_n(\theta) = \Den^{1/2-2H} B_n(\theta)$ and $C_n(\theta)=B_n(\theta)^T$, where 
	\begin{equation}\label{eq:Bn} 
		B_n(\theta)=\begin{pmatrix} \Den^{1-2H} & 2\Den^{1-2H}\lvert \log \Den\rvert \Pi &0\\ 0& \Den^{1-2H} & 0\\ 0 & 0 & 1\end{pmatrix},
	\end{equation}
	and note that $F_n(\theta)=-2D_\theta \mu_n(\theta)^T\wh{\mathcal{W}}_n(\wh V^n_t-\mu_n(\theta))$ with $\mu_n(\theta)=\Den^{2H-1}\Pi\Ga^H+Ce_1$. As $D_\theta\mu_n(\theta)=(\Den^{2H-1}\Pi(\partial_H \Ga^H-2\lvert \log\Den\rvert \Ga^H),\Den^{2H-1}\Ga^H, e_1) \in \R^{(1+R)\times 3}$, it follows from Corollary~\ref{cor:CLT:lags} that
	\begin{align*}
		A_n(\theta_t)F_n(\theta_t)	&=-2\Den^{1/2-2H}\begin{pmatrix}\Den^{1-2H}  & 2\Den^{1-2H}\lvert \log \Den\rvert \Pi_t& 0\\ 0& \Den^{1-2H} & 0\\ 0 & 0 & 1\end{pmatrix}\\
		&\quad\times(\Den^{2H-1}\Pi_t(\partial_H \Ga^H-2\lvert \log\Den\rvert \Ga^H),\Den^{2H-1}\Ga^H, e_1)^T\wh{\mathcal{W}}_n (\wh V^n_t-\mu_n(\theta_t)) \\
		&=-2 ( \Pi_t\partial_H \Ga^H , \Ga^H, e_1)^T\wh{\mathcal{W}}_n\Den^{-1/2}(\Den^{1-2H}\wh V^n_t-\Pi_t\Ga^H-\Den^{1-2H}C_t e_1 ) \\
		&\lims 2 ( \Pi_t\partial_H \Ga^H , \Ga^H, e_1)^T\mathcal{W} \calz = 2E(t)\mathcal{W}^{1/2}\calz. 
	\end{align*}
	This shows property (E.1) in Appendix A of \cite{MP23}.
	
	Next, we verify (E.2)' and note that $F_n(\theta)$ is continuously differentiable around $\theta_t$ with $D_\theta F_n(\theta)=-2D^2_\theta\mu_n(\theta)^T\wh{\mathcal{W}}_n(\wh V^n_t-\mu_n(\theta))+2D_\theta\mu_n(\theta)^T\wh{\mathcal{W}}_nD_\theta\mu_n(\theta)$. Using Corollary~\ref{cor:CLT:lags}, one can show that $-2B_n(\theta_t)D^2_\theta\mu_n(\theta)^T\wh{\mathcal{W}}_n(\wh V^n_t-\mu_n(\theta))C_n(\theta_t)\limp 0$ locally uniformly in a neighborhood of radius $r_n=1/(\log \Den)^2$ around $\theta_t$. This implies that 
	\begin{equation}\label{eq:BCE} 
		\sup_{\theta:\lvert \theta-\theta_t\rvert \leq r_n} \lVert B_n(\theta_t)D_\theta F_n(\theta)C_n(\theta_t) - \ov E(t)\rVert \limp 0, 
	\end{equation} 
	where $\ov E(t)=2( \Pi_t\partial_H \Ga^H , \Ga^H, e_1)^T \mathcal{W} ( \Pi_t\partial_H \Ga^H , \Ga^H, e_1)$. Let $\dot\Ga^H$ and $\dot\partial_H\Ga^H$ be obtained from $\Ga^H$ and $\partial_H\Ga^H$ by omitting the first entry. Since $\dot\Ga^H$ and $\dot\partial_H\Ga^H$ are not collinear and $\partial_H \Ga^H_0=0$, it follows that $( \Pi_t\partial_H \Ga^H , \Ga^H, e_1)$ has full rank, which in turn shows that $\ov E(t)$ is a regular matrix.
	Since $\lVert C_n(\theta_t)\rVert_2\lVert B_n(\theta_t)A_n(\theta_t)^{-1}\rVert_2/r_n\lec \Den^{2H-1/2}\lvert \log \Den\rvert^2\to0$, we have shown (E.2)' in \cite{MP23}.
	Because $A_n(\theta)B_n(\theta)^{-1} = \Den^{1/2-2H}\mathrm{Id}_3$  and $\Den^{2H-1/2}C_n(\theta_t)=D_n(t)$, Theorem~A.2 of \cite{MP23}  implies \eqref{eq:conv-theta}.
	
	The convergence statement in \eqref{eq:conv-theta-2} can be shown analogously,  by omitting the last row and/or column (corresponding to $C_t)$ in all the matrices in the computations above. The local uniqueness statement in (iii) is implied by Theorem~A.2 of \cite{MP23}.
	For the last statement in (iii),   let $\mu'_n(\theta')=\Den^{2H-1}\Pi\Ga^H$ for $\theta'=(H,\Pi)$, so that $F'_n(\theta')=\nabla_{\theta'}\lVert \wh{\mathcal{W}}_n^{1/2}(\wh V^n_t-\Den^{2H-1}\Pi\Ga^H)\rVert^2_2 = -2D_{\theta'} \mu'_n(\theta')^T\wh{\mathcal{W}}_n(\wh V^n_t-\mu'_n(\theta'))$. Furthermore, let $X_\times$ denote the upper left $2\times 2$-matrix of a matrix $X\in\R^{3\times 3}$ and $x_\times$ denote the vector consisting of the first two entries of a vector $x\in\R^3$. Define  $X'_n(\theta) = X_n(\theta)_\times$ for $X\in\{A,B,C\}$. Because  $F_n(\wh \theta^n_t)=0$ (with high probability) by assumption, 
	\[
	0	=  F_n(\wh \theta^n_t) = -2 D_\theta\mu_n(\wh \theta^n_t)^T\wh{\mathcal{W}}_n(\wh V^n_t - \mu_n(\wh \theta^n_t)),
	\] 
	so taking $(\cdot)_\times$ and multiplying by $A'_n(\theta'_t)$ on both sides yield
	\begin{align*}
		0&= -2A'_n(  \theta'_t)D_{\theta'}\mu'_n(\ov \theta^n_t)^T\wh{\mathcal{W}}_n (\wh V^n_t - \mu'_n(\ov \theta^n_t) - \wh C^n_te_1) \\
		&=-2A'_n(  \theta'_t)D_{\theta'}\mu'_n(\ov \theta^n_t)^T\wh{\mathcal{W}}_n(\wh V^n_t - \mu'_n(\ov \theta^n_t) )+o_\P(1)\\
		&= A'_n(  \theta'_t)F'_n(\ov\theta^n_t)+o_\P(1),
	\end{align*}
	where the second step follows from the assumption $H\in(0,\frac14)$ and the fact   that $A'_n(  \theta'_t)$ and $D_{\theta'}\mu'_n(\ov \theta^n_t)$ have matrix norms of magnitude $\Den^{3/2-4H}$ and $\Den^{2H-1}\lvert \log \Den\rvert$, respectively. Since $F'_n(\wh\theta^{\prime n}_t)=0$ by construction, Taylor's theorem yields some $\wt \theta^n_t$ between $\ov\theta^n_t$ and  $\wh\theta^{\prime n}_t$ such that
	\begin{align*}
		0	&= A'_n(\theta'_t)D_{\theta'}F'_n(\wt \theta^n_t)(\ov\theta^n_t-\wh\theta^{\prime n}_t)+o_\P(1)\\ &=A'_n(\theta'_t)B'_n(\theta'_t)^{-1}\bigl(B'_n(\theta'_t)D_{\theta'}F'_n(\wt \theta^n_t)C'_n(\theta'_t)\bigr)C'_n(\theta_t)^{-1}(\ov\theta^n_t-\wh\theta^{\prime n}_t)+o_\P(1).
	\end{align*}
	Similarly to \eqref{eq:BCE}, we have $B'_n(\theta'_t)D_{\theta'}F'_n(\wt \theta^n_t)C'_n(\theta'_t)\to \ov E'(t)$, where $\ov E'(t)=2( \Pi_t\partial_H \Ga^H , \Ga^H)^T\linebreak \mathcal{W} ( \Pi_t\partial_H \Ga^H , \Ga^H)$. Because $\Den^{2H-1/2}C'_n(\theta'_t)=D'_n(t)$ and $A'_n(\theta'_t)(B'_n(\theta'_t))^{-1}=\Den^{1/2-2H}\mathrm{Id}_2$, this shows that 
	$0= \ov E'(t)D'_n(t)^{-1}(\ov\theta^n_t-\wh\theta^{\prime n}_t)+o_\P(1)$.
	Since $E'(t)$ is invertible, we have $D'_n(t)^{-1}(\ov\theta^n_t-\wh\theta^{\prime n}_t)\limp0$, which completes the proof. 
\end{proof}

\begin{proof}[Proof of Corollary~\ref{cor:feas}]
	We only consider the case $H\in(\frac14,\frac12)$; if $H\in(0,\frac14)$, the proof is similar thanks the third part of Theorem~\ref{thm:main}.
	Combining classical spot volatility estimation techniques (see e.g.,  Chapter~8 in \cite{AitSahalia14}) with the assumption that  $\rho$ and $\si$ are $\frac12$-H\"older continuous in squared mean, one can  show that $\Den^{-2H}(\wh m^{n,r}_i -\ov m^{n,r}_i) = O_\P(k_n^{-1/2}\vee \sqrt{k_n\Den})$ uniformly in $r$, where $\ov m^{n,r}_i=\rho^2_{(i-1)\Den}\Ga^H_r +\si^2_{(i-1)\Den}\bone_{\{r=0\}}\Den^{1-2H}$. Therefore, if we define $\ov \eta^{(i)}_r$ (resp., $\ov\Sig^{(\ell)}_n$, $\ov \Sig_n$) in the same way as $ \eta^{(i)}_r$ (resp., $\wh\Sig^{(\ell)}_n$, $\wh \Sig_n$) except that $\wh m^{n,r}_i$ (resp., $\eta^{(i)}$, $\wh \Sig_n^{(\ell)}$) is replaced by $\ov m^{n,r}_i$ (resp., $\ov\eta^{(i)}$, $\ov \Sig_n^{(\ell)}$), then because $K$ is uniformly bounded, we have $\Den^{-4H}(\wh \Sig_n - \ov\Sig_n) = O_\P(\ell_n(k_n^{-1/2}\vee \sqrt{k_n\Den}))\limp 0$. Next, similarly to Lemma~\ref{approx:lem:1}, one can show  that $\Den^{-4H} \ov\Sig_n^{(\ell)} -  \int_0^t a^{(\ell)}_n(s) \,\dd s = O_\P(\sqrt{\Den})$ uniformly in $\ell$, where
	\begin{align*}
		a^{(\ell)}_n(s)_{r,r'} &= \rho_s^4 v^{H,\ell}_{r,r'}+\si_s^4 \Den^{2-4H}(2\bone_{\{\ell=r=r'=0\}}+\bone_{\{\ell=0,r=r'>0\}})\\
		&\quad+\rho_s^2\si_s^2\Den^{1-2H}[\Ga^H_{\lvert r-r'\rvert}( \bone_{\{\ell=r'-r\}}+\bone_{\{\ell=0\}}) + \Ga^H_{r+r'}(\bone_{\{\ell=r'\}} + \bone_{\{\ell=r=0\}})].
	\end{align*}
	Hence, we have $\Den^{-4H}\ov\Sig_n =  \int_0^t [a^{(0)}_n(s)+\sum_{\ell=1}^{\ell_n} K(\ell,\ell_n)(a^{(\ell)}_n(s)+a^{(\ell)}_n(s)^T)]\,\dd s + O_\P(\ell_n\sqrt{\Den})= (v^{H,0}+\sum_{\ell=1}^{\ell_n} K(\ell,\ell_n) (v^{H,\ell}+(v^{H,\ell})^T))  \int_0^t \rho_s^4 \,\dd s+O_\P(\ell_n\sqrt{\Den} \vee \Den^{1-2H})\limp   \calc^H(t)$, where the latter is defined in \eqref{covMat:lags}. Because $E_n(t)=\Den^{-2H-1/2}(\wh\zeta_n D_n(t))^T\wh {  \mathcal{W}}_n^{1/2}\limp E(t)$, we have
	\begin{align*}
		&D_n(t)^{-1}\mathbb{V}_n(D_n(t)^{-1})^T\\
		&\quad=\Den(D_n(t)^T\wh\zeta_n^T\wh {  \mathcal{W}}_n\wh\zeta_nD_n(t))^{-1}D_n(t)^T\wh\zeta_n^T\wh {  \mathcal{W}}_n\wh\Sig_n\wh {  \mathcal{W}}_n\wh\zeta_nD_n(t)(D_n(t)^T\wh\zeta_n^T \wh {  \mathcal{W}}_n\wh\zeta_nD_n(t))^{-1} \\
		&\quad= (E_n(t)E_n(t)^T)^{-1}E_n(t)\wh {  \mathcal{W}}_n^{1/2}\Den^{-4H}\wh\Sig_n\wh {  \mathcal{W}}_n^{1/2}E_n(t)^T(E_n(t)E_n(t)^T)^{-1}\\
		&\quad\limp (E(t)E(t)^T)^{-1}E(t) \mathcal{W}^{1/2} \calc^H(t) \mathcal{W}^{1/2}E(t)^T(E(t)E(t)^T)^{-1}.
	\end{align*}
	By Theorem~\ref{thm:main}, this shows that $\mathbb{V}_n$ is a consistent estimator of the asymptotic covariance matrix of $\wh \theta^n_t$, which implies the assertion of the corollary.
\end{proof}

\begin{figure}[thb!]
	\centering
	\includegraphics[width=\linewidth]{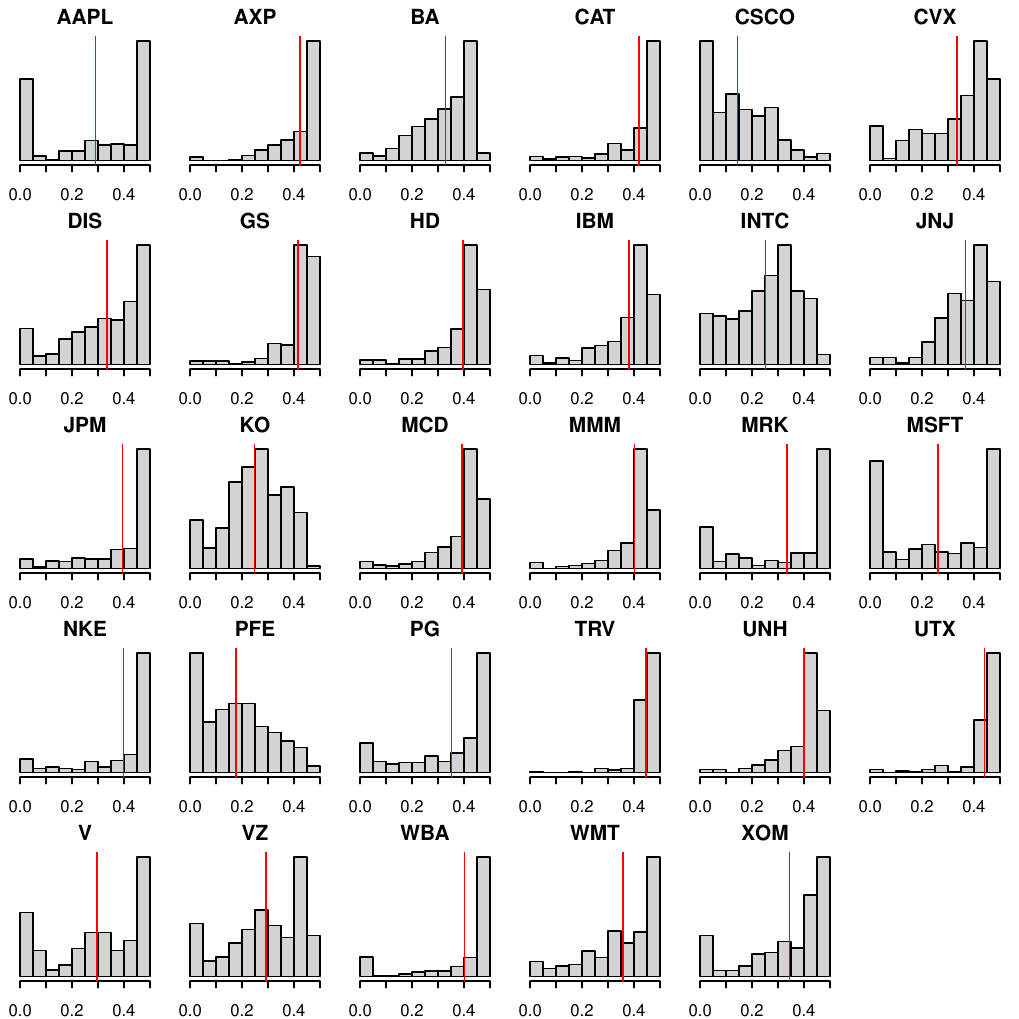}
	\caption{Histogram of daily estimates of $H$ for 29 DJIA companies in 2019, with the mean indicated by a red line.}\label{fig:H29}
\end{figure}

\section{Further empirical results}\label{sec:quote}

We extend the empirical analysis of Section~\ref{sec:emp} to transaction data of single-name stocks. To this end, we consider the 29 stocks that were constituents of the DJIA index for the whole year of 2019. Using the TAQ database, we collect, for each trading day in 2019, all trades 
from 9:30am until 4:00pm Eastern Time. We preprocess the  data in the same way as outlined in Section~\ref{sec:emp}. 
We sample in calendar time every five seconds and choose all tuning parameters  as in the Monte Carlo of Section~\ref{sec:sim}, except that we fix $k_n=60$ (such that $k_n\Den$ remains equal to five minutes).

Figure~\ref{fig:H29} shows a histogram of $H$-estimates for each company, where each estimate corresponds to one trading day and is computed using data of the previous five business days. It is apparent from Figure~\ref{fig:H29} that the distribution of $H$-estimates is highly heterogeneous between different companies. While some histograms appear to be symmetric (e.g., AAPL, KO, MSFT, V), which is similar to what we saw for SPY in 2019 (see Figure~\ref{fig:H:nsr}), others are highly skewed to the left (e.g., AXP, GS, HD, JPM) or to the right (e.g., CSCO, PFE). Also, some companies have only  few days without noise (e.g., CSCO, INTC, KO, PFE), while others have a significant number of noise-free days. This calls for a detailed investigation of the determinants of $H$ in future research.

\bibliographystyle{agsm}

\bibliography{biblio}
\end{document}